\definecolor{chcol}{rgb}{0.4,0.,0.9}
\newcommand{\halfComma}{\kern 0.083334em}
\newcommand\iprod[1]{\left\langle #1\right\rangle}                                             
\newcommand\inorm[1]{\left |\left| #1\right|\right|}                                               
\newcommand\spacevec[1]{\accentset{\,\rightarrow}{#1}}                        
\newcommand\statevec[1]{\mathbf #1}                                                     
\newcommand\statevecGreek[1]{\boldsymbol #1}                                     
\newcommand\mmatrix[1]{\underbar{#1}}				
\newcommand\oneHalf{\frac{1}{2}}
\newtheorem{thm}{Theorem}
\newtheorem{lem}{Lemma}
\newtheorem{rem}{Remark}
\newtheorem{cor}{Corollary}
\newtheorem{ex}{Example}
\begin{document}

\begin{frontmatter}

\title{On the Theoretical Foundation of Overset Grid Methods for Hyperbolic Problems: Well-Posedness and Conservation}

\author[1]{David A. Kopriva}
\ead{kopriva@math.fsu.edu}
\address[1]{Department of Mathematics, The Florida State University, Tallahassee, FL 32306, USA and Computational Science Research Center, San Diego State University, San Diego, CA, USA.}

\author[2]{Jan  Nordstr\"om\corref{cor2}}
\ead{jan.nordstrom@liu.se}
\address[2]{Department of Mathematics, Applied Mathematics, Linköping University, 581 83 Linköping, Sweden and Department of Mathematics and Applied Mathematics
 University of Johannesburg
 P.O. Box 524, Auckland Park 2006, South Africa.}
\author[3]{Gregor J. Gassner}
\ead{ggassner@uni-koeln.de}
\address[3]{Department for Mathematics and Computer Science; Center for Data and Simulation Science,
 University of Cologne, Weyertal 86-90, 50931, Cologne, Germany.}
 \cortext[cor2]{Corresponding author}
\begin{abstract}
We use the energy method to study the well-posedness of initial-boundary value problems approximated by overset mesh methods in one and two space dimensions for linear constant-coefficient hyperbolic systems. We show that in one space dimension, for both scalar equations and systems of equations, the problem where one domain partially oversets another is well-posed when characteristic coupling conditions are used. If a system cannot be diagonalized, as is ususally the case in multiple space dimensions, then the energy method does not give proper bounds in terms of initial and boundary data. For those problems, we propose a novel penalty approach. 
We show, by using a global energy that accounts for the energy in the overlap region of the domains, that under well-defined conditions on the coupling matrices the penalized overset domain problems are energy bounded, conservative, well-posed and have solutions equivalent to the original single domain problem.

\end{abstract}
\begin{keyword}
Overset Grids, Chimera Method, Well-Posedness, Stability, Conservation, Penalty Methods
\end{keyword}
\end{frontmatter}


\section{Introduction}

Overset grid methods \cite{Chan99},\cite{Meakin:1999io} have been used for forty years to simplify the application of numerical methods to complex geometric configurations. A typical overset grid model is to use a fitted grid near a body, placed over a simpler underlying mesh, with a region of the underlying mesh blanked out. An example of such a mesh is shown in Fig. \ref{fig:oversetSketch}. Interface conditions are applied to the overset mesh boundaries to couple the solutions between them. 

\begin{figure}[tbp] 
   \centering
   \includegraphics[width=3.5in]{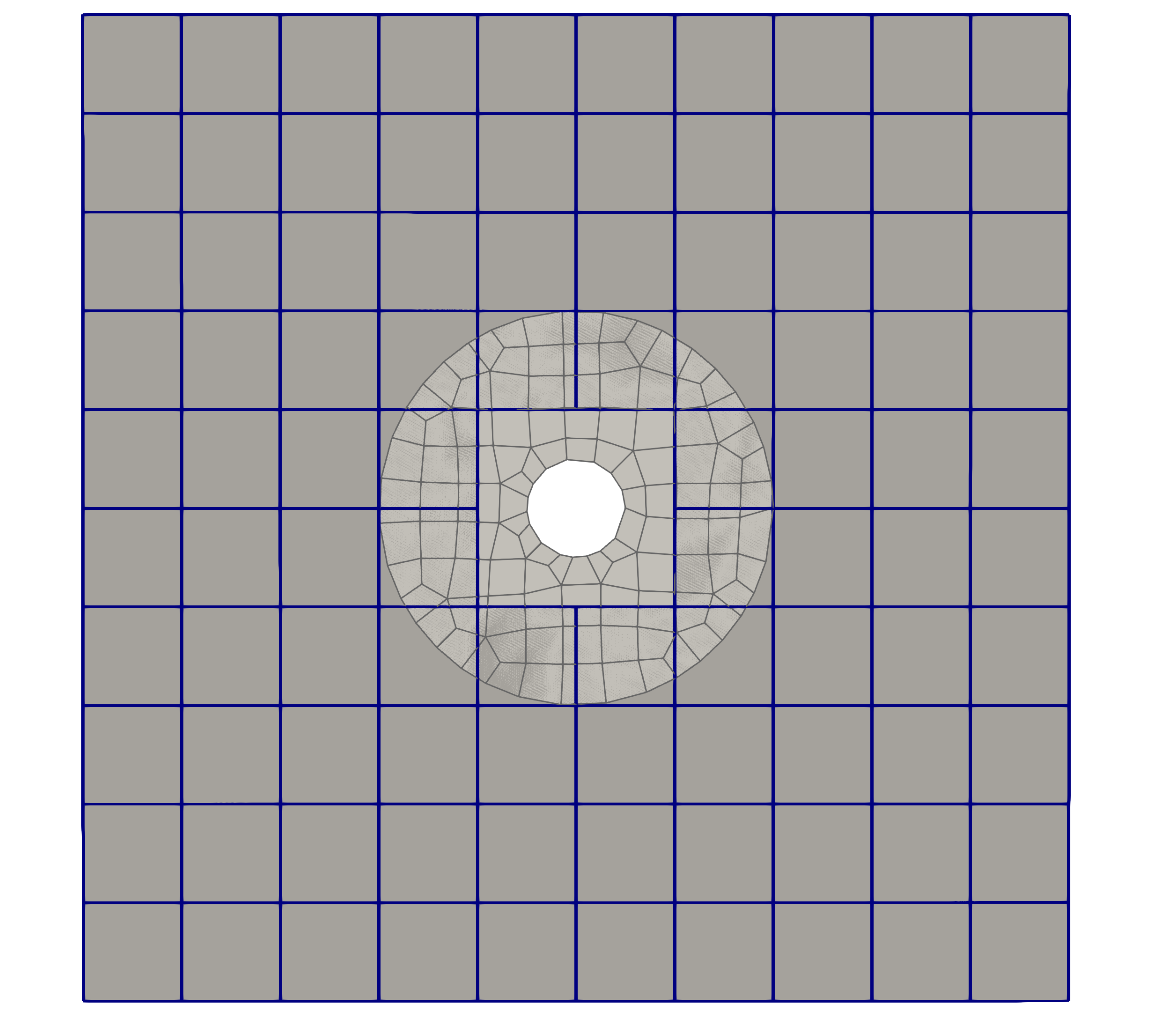} 
   \caption{Example of an overset grid where a mesh with two circular boundaries oversets a uniform Cartesian grid with a blanked out region in the center}
   \label{fig:oversetSketch}
\end{figure}

The history and literature of overset grid methods are extensive. The topic has its own conference series \cite{OSETGrid} that has been held for almost thirty years. Numerous software packages exist to implement the schemes, including \cite{ChimeraGridtools}, \cite{Buning:2004ef}, \cite{Henshaw:2002lk}, to cite only three. The methods have been used in conjunction with all major spatial approximation schemes: finite difference \cite{STEGER1987301}, finite element (continuous \cite{Hansbo:2003eu}, discontinuous \cite{GALBRAITH201427},\cite{brazell2016overset}, and hybridizable \cite{Kauffman:2017uj}), finite volume \cite{OSTI}, and spectral \cite{Kopriva:1989},\cite{Merrill:2016qj}. Finally, overset grid methods are used in a wide variety of application areas including aerodynamics \cite{STEGER1987301}, solid mechanics \cite{Appelo:2012du}, meteorology \cite{kageyama2004yin}, and electromagnetics \cite{Henshaw:2006xt}.

Stability of the coupling procedures has long been a practical and theoretical issue with overset mesh methods \cite{SHARAN2018199}, and to date fully multidimensional stability proofs are not available. The most recent works have used the energy method with summation by parts finite difference schemes to prove stability for one-dimensional scalar and system problems \cite{Bodony:2011cl}. An extension to two space dimensions is made in \cite{Reichert:2011ru}, with a note that the one dimensional proof extends only to problems where the coefficient matrices are simultaneously diagonalizable. 

One of the issues with finding a general stability proof is that the original initial-boundary value problem (IBVP) on which the approximation is based needs to be well-posed in the first place. If it is not, then further development is pointless. In fact, the steps used to show well-posedness can often be directly followed to lead to a stability proof for a numerical scheme \cite{Nordstrom:2016jk}. We are unaware, however, of any work to date that has examined the well-posedness of overset grid problems on which the numerical methods are based. 

In this paper, we step back from the numerical issue of stability and examine the more fundamental problem on the overset mesh approach as an IBVP problem of overset subdomains, and formulate such problems that are energy bounded, well-posed, and conservative.

We broadly describe the overset domain problem that we address in the following way: The solution, $\statevecGreek\omega$, to the hyperbolic system of equations
\begin{equation}
    \statevecGreek \omega_t + \sum_{i-1}^d \mmatrix A_i\frac{\partial \statevecGreek \omega}{\partial x_i} = 0
\end{equation}
is desired on a domain $\Omega$, where $d$ is the number of space dimensions, $\{ x_i\}_{i=1}^d$ are the coordinate directions, and the coefficient matrices, $\mmatrix A_i$, are constant and symmetric. We \emph{do not} require that the coefficient matrices are simultaneously diagonalizable. 

Instead of solving the problem on $\Omega$, it is solved on a series of problems posed on a collection of subdomains, $\Omega_k$, that can overlap, but which cover $\Omega$, i.e. $\Omega = \cup \Omega_k$. Each subdomain hosts a solution $\statevec u^k$ that satisfies 
the same linear system of hyperbolic partial differential equations,
\begin{equation}
    \statevec u^k_t + \sum_{i=1}^d \mmatrix A_i\frac{\partial\statevec u^k}{\partial x_i} = 0.
\end{equation}
Boundary conditions are applied along the physical boundaries $\partial \Omega \displaystyle \cap \Omega_k$, which we assume are dissipative. 

Importantly, initial conditions are specified over the whole domain, $\Omega$, so that in any overlap region the solutions within the subdomains coincide at time $t = 0$ with the global solution, $\statevecGreek\omega_0 = \statevecGreek\omega(\spacevec x,0)$. Coupling conditions are applied to the subdomain intersections, and possibly within the overlap regions themselves. The general expectation among practitioners when overset grids are used is that the solutions evolve within each subdomain equivalently so that they represent the solution one would get by solving the problem on the original domain, $\Omega$. We investigate that assumption in detail and pose problems for which it is valid.

More specifically, we restrict the problems in this paper to two subdomains with solutions $\statevec u$ and $\statevec v$ in one and two space dimensions that partially overlap, but which otherwise contain the salient features of the general problem. Throughout the paper we assume trivial upwind physical boundary conditions, and focus solely on the subdomain coupling.

We develop the results systematically, starting with the problem of two domains with partial overlap in one space dimension for the scalar equation, which provides insight, and allows us to introduce the approach and technique that we use. For one space dimension, we show that the problem where characteristic interface conditions are used to transfer data from one subdomain to another is energy bounded for scalar problems and for the hyperbolic system. The system proof relies on the diagonalizability of the coefficient matrix. We proceed to show that if one does not use diagonalizability, which one cannot in general do in multiple space dimensions, then parasitic terms appear that are not bounded by initial or boundary data.

To get energy bounded problems without assuming simultaneous diagonalization of the coefficient matrices, we use multiple interior penalty functions to eliminate terms not bounded by the data. Multiple penalty procedures have been used previously to increase the accuracy and stability of numerical solutions \cite{FRENANDER201634,Nordstrom:2014qd}, and here we use them as part of the original IBVP to enforce energy boundedness. By adding penalty terms to the original equations we can remove the parasitic terms and obtain a suitable energy bound. 

The use of the penalty terms extends to multiple space dimensions (without the assumption of diagonalization), and we show well-posedness of two-dimensional overset domain problems with penalties applied at the overlap interfaces and in the interior of the overlap region. We present two multiple penalty formulations of the overset domain problem: The first applies penalties only at the boundaries of the overlap region. The other introduces additional penalties within the overlap region. The additional penalties could be viewed as favoring one subdomain over another, e.g. to favor one, numerically, that is expected to be more accurate due to better resolution. We show that with specific conditions on the coupling matrices and for trivial upwind physical boundary conditions, both formulations have their energy bounded by the initial data. Furthermore, the coupling conditions required for energy boundedness simultaneously ensure conservation, so that the flux lost from one subdomain is gained by the other.

We show well-posedness and conservation by first developing an energy bound for the combined overset domain solutions. Unique in our approach is to use three techniques for the general problem: We
\begin{enumerate}
    \item[(T1)] account for the energy in the overset portion and relate it to interface values,
    \item[(T2)] use a norm that corrects for the double--counted energy due to the overlap to allow bounds to be made on the individual domain components, and
    \item[(T3)] use internal penalties to get desired energy bounds. Although the use of penalty terms is common, we use them here for the first time on the overset domain problem.
\end{enumerate}

 We use the energy estimates to show that the solutions of the overset problems match the solution of the original domain problem. From the fact that the original problem is well-posed, i.e. the solution exists, is unique, and is energy bounded, we infer that the overset domain problem is also well-posed. Finally, we show that in each case, the overset domain problems with penalty conditions are conservative, if they are stable.

To aid with navigating the paper, we collect the overset domain problems that we study in Table \ref{Tab:Summary}, and list the theorems for well-posedness and conservation. 
\begin{table}[htp]
\begin{center}\begin{tabular}{l|llll}
Equation & Coupling & Dimension & Well-Posedness & Conservation \\
 \hline
Scalar & Characteristic & 1D & Thm. \ref{thm:WellPosednessScalar1D} &  \\System & Characteristic & 1D & Thm. \ref{thm:1DSystemWellPosedness} &  \\
System & Interface Penalty & 1D & Thm. \ref{thm:1DEquivalenceWithPenalty} & Thm. \ref{thm:1DPenaltyConservationMatch} \\System & Interface Penalty & 2D & Thm. \ref{thm:2DEquivalence} & Thm. \ref{thm:Conservation2DOriginal} \\
System & Interface + Overlap Penalty & 2D & Thm. \ref{thm:Wellposed2DWithOverlapPenalty} & Thm. \ref{thm:2DOverlapPenaltyConservation}
\end{tabular} 
\end{center}
\caption{Collection of results for well-posed overset domain problems}
\label{Tab:Summary}
\end{table}

\section{Overset Domains in One Space Dimension}
We start by examining three one-dimensional overset domain problems. The first is for the scalar equation. The second extends the problem to the system, and shows energy boundedness using the fact that the system can be diagonalized. That problem is re-done without the use of diagonalization, which is usually not available in multiple space dimensions, to show that additional terms appear in the energy that are not bounded by initial or boundary data. Finally, we introduce a multiple penalty formulation for the system that is energy bounded without the need to use diagonalization, making it a candidate for multidimensional problems.

\subsection{The Scalar Problem in One Space Dimension}

The scalar overset domain problem in one space dimension seeks the solution to the initial-boundary-value problem
\begin{equation}
\begin{gathered}
    \omega_t +\alpha \omega_x = 0,\quad x\in\Omega = (a,d)\hfill\\
    \omega(a,t)=0,\quad t>0\hfill\\
    \omega(x,0)= \omega_0, \quad x\in\Omega,\hfill
\end{gathered}
\label{eq:ScalarOmegaBVP}
\end{equation}
where $\alpha>0$, by finding the solutions on two domains $\Omega_u$ and $\Omega_v$, as seen in Fig. \ref{fig:Overlap1D}.
\begin{figure}[tbp] 
   \centering
   \includegraphics[width=4in]{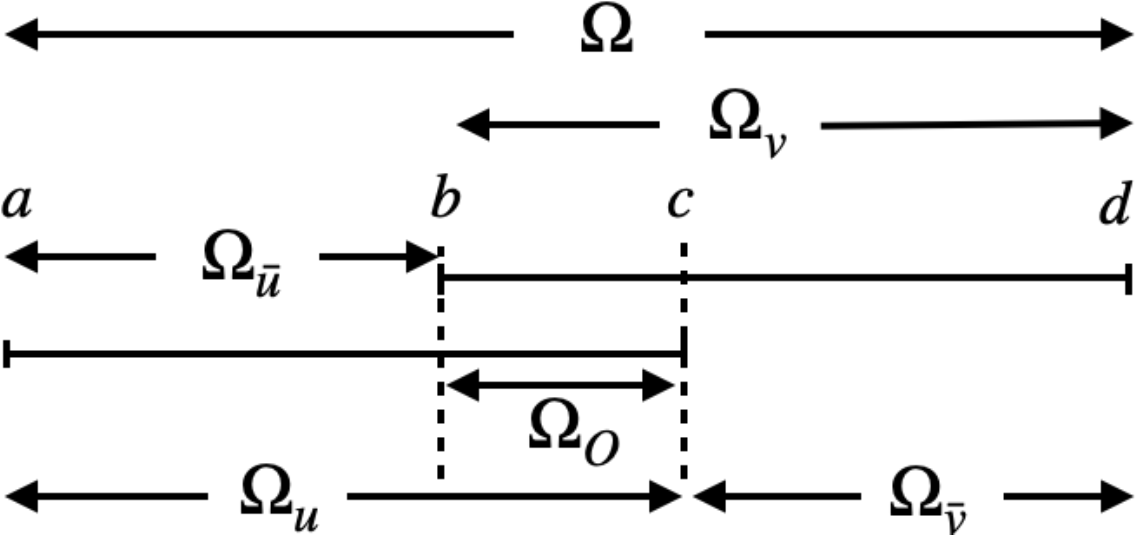} 
   \caption{Overset domain definitions in 1D}
   \label{fig:Overlap1D}
\end{figure}
For those domains, we pose two problems, $L$ and $R$
\begin{equation}
L\;\left\{\begin{gathered}
u_t + \alpha u_x = 0, \quad x\in\Omega_u\hfill\\
u(a,t) = 0\hfill\\
u(x,0) = \omega_0(x) \hfill
\end{gathered} \right. \quad
R\;\left\{\begin{gathered}
v_t + \alpha v_x = 0, \quad x\in\Omega_v\hfill\\
v(b,t) = u(b,t)\hfill\\
v(x,0) = \omega_0(x) \hfill
\end{gathered} \right.
\label{eq:L-RSystem}
\end{equation}

The original problem \eqref{eq:ScalarOmegaBVP} for $\omega$ is known to be well-posed and the energy is bounded by the initial energy \cite{Lorenz:1989fk}. We show that $u$ and $v$ are energy bounded, and from that, $u(x,T) = \omega(x,T)$ on $\Omega_u$ and $v(x,T) = \omega(x,T)$ on $\Omega_v$, making the overset problem well-posed as well.

We first show that the problems posed in \eqref{eq:L-RSystem} are energy bounded. Showing boundedness of the problem $L$ is standard: We multiply the PDE by $u$ and integrate over the domain $\Omega_u$ to get
\begin{equation}
    \iprod{u,u_t}_{\Omega_u} + \alpha\iprod{u,u_x}_{\Omega_u} = 0,
\end{equation}
where $\iprod{f,g}_{\Omega_u} = \int_{\Omega_u} fgdx$. The energy is then defined as the norm, $\inorm{u}_{\Omega_u} = \iprod{u,u}^{1/2}_{\Omega_u}$.
Integrating by parts and applying the boundary condition on the left,
\begin{equation}
\frac{d}{dt}\inorm{u}_{\Omega_u}^2 = -\alpha u^2(c,t).
\label{eq:ScalarUNormDt}
\end{equation}
When we integrate \eqref{eq:ScalarUNormDt} in time, 
\begin{equation}
    \inorm{u(T)}^2_{\Omega_u}+\alpha\int_0^T u^2(c,t)dt \le \inorm{\omega_0}^2_{\Omega_u},
\end{equation}
which shows that the energy at any time $T$ and the value $u(c,T)$ are bounded by the initial condition. The problem for $u$ is completely decoupled from the problem for $v$.

The problem, $R$, has a non-trivial inflow boundary condition, so the same procedure leads to
\begin{equation}
\frac{d}{dt}\inorm{v}_{\Omega_v}^2 = -\alpha v^2(d,t) + \alpha u^2(b,t),
\label{eq:VEnergyWithUBC}
\end{equation}
where we have applied the boundary condition, $v(b,t) = u(b,t)$.
To show energy boundedness, i.e. the solution is bounded in terms of the data, we need a bound on $u(b,t)$.

As the key step to get a bound on the interior value, $u(b,t)$, we form the energy on the complementary domain $\Omega_{\bar u}$, and use integration by parts again, noting that integration by parts applies on any subinterval of the domain $\Omega_u$. 
Thus, it is also true, as in \eqref{eq:ScalarUNormDt}, that
\begin{equation}
\frac{d}{dt}\inorm{u}_{\Omega_{\bar u}}^2 = -\alpha u^2(b,t).
\label{eq:PartialBoundL}
\end{equation}
The energy rate \eqref{eq:PartialBoundL} expresses the use of technique (T1).
Therefore, we can write the value at $x=b$ in terms of the time derivative of the energy in the complementary domain, $\Omega_{\bar u}$ as
\begin{equation}
\alpha u^2(b,t) = -\frac{d}{dt}\inorm{u}_{\Omega_{\bar u}}^2.
\label{eq:ubarvalue}
\end{equation}

When we insert \eqref{eq:ubarvalue} into \eqref{eq:VEnergyWithUBC}, and acknowledge the dissipativity of the (outflow) term on the right at $x=d$, we bound $v$ in terms of $u$, as
\begin{equation}
\frac{d}{dt}\inorm{v}_{\Omega_v}^2 \le  -\frac{d}{dt}\inorm{u}_{\Omega_{\bar u}}^2,
\end{equation}
or
\begin{equation}
\frac{d}{dt}\inorm{v}_{\Omega_v}^2 + \frac{d}{dt}\inorm{u}_{\Omega_{\bar u}}^2\le  0.
\label{eq:scalarVboundeqn}
\end{equation}
Alternately, we can simply add \eqref{eq:VEnergyWithUBC} and \eqref{eq:PartialBoundL} to get \eqref{eq:scalarVboundeqn}.

We then integrate \eqref{eq:scalarVboundeqn} in time, giving
\begin{equation}
\inorm{v}_{\Omega_v}^2 + \inorm{u}_{\Omega_{\bar u}}^2\le \inorm{\omega_0}_{\Omega_v}^2 + \inorm{\omega_0}_{\Omega_{\bar u}}^2 .
\label{eq:Sharp1DScalarEstimateV}
\end{equation}
Now, $\inorm{\omega_0}_{\Omega_{\bar u}}^2\le  \inorm{\omega_0}_{\Omega_u}^2$, so
\begin{equation}
\inorm{v}_{\Omega_v}^2 \le \inorm{\omega_0}_{\Omega_v}^2 + \inorm{\omega_0}_{\Omega_u}^2.
\label{eq:NotSharp1DScalarEstimate}
\end{equation}
Therefore, each of the problems $L$ and $R$ are energy bounded, with the norm of the solution being bounded by the initial data (for trivial inflow conditions). 

\begin{rem}
The right hand side of \eqref{eq:NotSharp1DScalarEstimate} is not sharp when using the whole domain since it double counts the overlap region $\Omega_O$. It's also not necessary. The sharper bound is to use \eqref{eq:Sharp1DScalarEstimateV}, which does not include the double counted energy,
\begin{equation}
\begin{split}
& \inorm{u}_{\Omega_u}^2 \le \inorm{\omega_0}_{\Omega_u}^2
\\&
\inorm{v}_{\Omega_v}^2 \le \inorm{\omega_0}_{\Omega_v}^2 +  \inorm{\omega_0}_{\Omega_{\bar u}}^2 = \inorm{\omega_0}^2_{\Omega}.
\end{split}
\label{eq:Sharp1DScalarEstimate}
\end{equation}
This is technique (T2).
The sharper result makes sense: The total energy in $\Omega_u$ is bounded by the initial value. The total energy in $\Omega_v$ is bounded by the original in that domain plus what can enter from $\Omega_u$, the sum being the energy in the full domain, $\Omega$.
\end{rem}

In summary, we have proved
\begin{thm}
The overset domain problems defined in \eqref{eq:L-RSystem} are energy bounded with bounds \eqref{eq:Sharp1DScalarEstimate}.
\label{thm:ScalarEnergyBondedness}
\end{thm}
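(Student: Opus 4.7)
The plan is to derive the two bounds in \eqref{eq:Sharp1DScalarEstimate} separately, exploiting the fact that problem $L$ is fully decoupled from $R$, while $R$ depends on $L$ only through the interior value $u(b,t)$. The argument will rest on combining three ingredients: the standard energy method on each subdomain; an auxiliary energy identity on the complementary region $\Omega_{\bar u}$ (technique T1); and a norm choice that avoids double counting the overlap (technique T2).

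For problem $L$, I would apply the usual recipe: multiply the PDE by $u$, integrate over $\Omega_u$, integrate by parts, and apply the homogeneous inflow condition $u(a,t)=0$. The boundary term at $x=c$ is dissipative since $\alpha>0$, so after integrating in time the first line of \eqref{eq:Sharp1DScalarEstimate} follows immediately. This step reproduces \eqref{eq:ScalarUNormDt} and requires nothing beyond classical arguments.

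Problem $R$ is the substantive part. The same recipe applied to $v$ on $\Omega_v$ produces \eqref{eq:VEnergyWithUBC}, whose right-hand side contains the interior trace $u(b,t)$ of the $L$-solution. That trace is not \emph{a priori} controlled by the data of $L$, and a straight trace inequality will not do because it would introduce unbounded constants. The key move, technique (T1), is to run the energy method for $u$ on the complementary subinterval $\Omega_{\bar u}$ rather than on all of $\Omega_u$: since integration by parts is valid on any subinterval and the flux at the right endpoint of $\Omega_u$ is dissipative, one obtains \eqref{eq:PartialBoundL}, which rearranges into the identity \eqref{eq:ubarvalue} expressing $\alpha u^2(b,t)$ as the negative time-derivative of a bona fide energy. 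Substituting \eqref{eq:ubarvalue} into \eqref{eq:VEnergyWithUBC}, dropping the dissipative $-\alpha v^2(d,t)$ term, and integrating in time gives \eqref{eq:Sharp1DScalarEstimateV}. Finally, technique (T2) — using $\inorm{\omega_0}^2_{\Omega_v}+\inorm{\omega_0}^2_{\Omega_{\bar u}}=\inorm{\omega_0}^2_{\Omega}$ so as not to double count the overlap — delivers the sharp second line of \eqref{eq:Sharp1DScalarEstimate}.

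The main obstacle is the first appearance of (T1); once one recognizes that an interior pointwise value of $u$ can be \emph{generated} by an energy identity on a well-chosen subinterval, rather than bounded by a trace inequality, the remainder of the argument is a straightforward bookkeeping exercise. All other steps are standard energy-method manipulations and sign tracking of the boundary fluxes produced by integration by parts.
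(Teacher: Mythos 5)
Your proposal is correct and follows essentially the same route as the paper: the standard energy identity on $\Omega_u$ for problem $L$, the identity \eqref{eq:VEnergyWithUBC} for problem $R$, the complementary-domain identity \eqref{eq:PartialBoundL}--\eqref{eq:ubarvalue} to control the interior trace $u(b,t)$, and the non-double-counting observation $\inorm{\omega_0}^2_{\Omega_v}+\inorm{\omega_0}^2_{\Omega_{\bar u}}=\inorm{\omega_0}^2_{\Omega}$ to obtain \eqref{eq:Sharp1DScalarEstimate}. The only nit is that \eqref{eq:PartialBoundL} comes from integrating by parts on $\Omega_{\bar u}=(a,b)$ and applying the boundary condition at $x=a$, not from dissipativity at the right endpoint of $\Omega_u$; this does not affect the validity of the argument.
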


We use Thm. \ref{thm:ScalarEnergyBondedness}, to prove
\begin{thm}
The overset domain problem \eqref{eq:L-RSystem} is well-posed and its solution is equivalent to the solution of the original problem \eqref{eq:ScalarOmegaBVP} for $\omega$ on $\Omega$.
\label{thm:WellPosednessScalar1D}
\end{thm}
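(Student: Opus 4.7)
The plan is to exploit the one-way coupling in \eqref{eq:L-RSystem}: the subdomain $L$ problem is decoupled from $v$, so it can be solved first, and then $u(b,t)$ feeds into problem $R$ as data. I would first use the energy bound of Thm.~\ref{thm:ScalarEnergyBondedness} to extract uniqueness for each subproblem, then show that $\omega$ restricted to each subdomain satisfies the corresponding subproblem, and conclude equivalence by uniqueness.

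Concretely, the first step is to observe that $\omega|_{\Omega_u}$ satisfies the advection equation with $\omega(a,t)=0$ and initial data $\omega_0$ on $\Omega_u$, hence it is \emph{a} solution of problem $L$. Applying Thm.~\ref{thm:ScalarEnergyBondedness} to the difference $u - \omega|_{\Omega_u}$ (which has zero initial data and zero inflow), the energy bound \eqref{eq:Sharp1DScalarEstimate} forces $u \equiv \omega$ on $\Omega_u$. In particular, the trace value $u(b,t)$ coincides with $\omega(b,t)$ for all $t \in [0,T]$. I then repeat the argument on $\Omega_v$: since $\omega|_{\Omega_v}$ satisfies the PDE on $\Omega_v$ with initial data $\omega_0$ and inflow value $\omega(b,t)=u(b,t)$, it solves problem $R$. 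Subtracting from $v$ and applying the $v$-bound in \eqref{eq:Sharp1DScalarEstimate} to the difference (whose data vanishes by the previous step) yields $v \equiv \omega$ on $\Omega_v$.

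For well-posedness, existence of a solution pair $(u,v)$ is immediate from the preceding equivalence since $\omega$ is known to exist on $\Omega$. Uniqueness follows from the same energy argument applied to the difference of two hypothetical solution pairs of \eqref{eq:L-RSystem}, and continuous dependence on initial data is precisely \eqref{eq:Sharp1DScalarEstimate}, completing the three standard requirements.

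There is no serious obstacle here: the argument rides entirely on the energy bound already established in Thm.~\ref{thm:ScalarEnergyBondedness} together with the one-way nature of the coupling. The only point that requires a little care is the ordering, namely that one must establish $u \equiv \omega$ on $\Omega_u$ \emph{before} treating $v$, because the inflow trace $u(b,t)$ in problem $R$ must be identified with $\omega(b,t)$ for the difference $v-\omega|_{\Omega_v}$ to possess vanishing boundary data; this anticipates the less trivial situation in the system case, where no such one-way coupling exists and more delicate techniques will be needed.
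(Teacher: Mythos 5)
Your proposal is correct and follows essentially the same route as the paper: subtract $\omega$ restricted to each subdomain, note the differences satisfy the same problems with vanishing data, and invoke the energy bounds of Thm.~\ref{thm:ScalarEnergyBondedness} to conclude the differences are zero. The only (immaterial) difference is that you sequence the argument ($u\equiv\omega$ first, then identify the inflow trace, then $v$), whereas the paper applies the coupled bound \eqref{eq:Sharp1DScalarEstimate} to both difference problems at once, since the $v$-estimate already absorbs the interface dependence into the initial energy of $u'$ on $\Omega_{\bar u}$.
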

\begin{proof}
When we subtract \eqref{eq:ScalarOmegaBVP} from \eqref{eq:L-RSystem},
\begin{equation}
L'\;\left\{\begin{gathered}
u'_t + \alpha u'_x = 0, x\in\Omega_u\hfill\\
u'(a,t) = 0\hfill\\
u'(x,0) = 0 \hfill
\end{gathered} \right. \quad
R'\;\left\{\begin{gathered}
v'_t + \alpha v'_x = 0, x\in\Omega_v\hfill\\
v'(b,t) = u'(b,t)\hfill\\
v'(x,0) = 0 \hfill
\end{gathered} \right.
\label{eq:L-RSystemPerturbation}
\end{equation}
where $u' = u - \omega$ and $v'=v - \omega$. The problems $L'$ and $R'$ are identical to $L$ and $R$ except for the initial conditions. Then by \eqref{eq:Sharp1DScalarEstimate},
\begin{equation}
\begin{split}
& \inorm{u'}_{\Omega_u}^2 \le \inorm{u'(0)}_{\Omega_u}^2 = 0,
\\&
\inorm{v'}_{\Omega_v}^2 \le \inorm{v'(0)}_{\Omega_v}^2 +  \inorm{u'(0)}_{\Omega_{\bar u}}^2 = 0.
\end{split}
\label{eq:Sharp1DScalarEstimatePerturbation}
\end{equation}
Therefore the differences between the overset solutions and the full domain solution are zero, showing that $u = \omega$ and $v = \omega$, and that the overset domain problem provides the solution to the original single domain problem. Since the original problem for $\omega$ is well-posed, the overset domain problem is well-posed, too.
\end{proof}

\subsection{The Symmetric System in 1D}

We now extend \eqref{eq:ScalarOmegaBVP} to the system
\begin{equation}
\left\{\begin{gathered}
\statevecGreek\omega_t + \mmatrix A\statevecGreek\omega_x = 0, x\in\Omega\hfill\\
\statevecGreek\omega(a,t) = BL_1\left( \statevecGreek\omega(a,t)\right)\hfill\\
\statevecGreek\omega(d,t) = BR_2\left( \statevecGreek\omega(d,t)\right)\hfill\\
\statevecGreek\omega(x,0) = \statevecGreek \omega_0(x) \hfill
\end{gathered} \right.
\label{eq:SystemForOmega1D}
\end{equation}
We assume for convenience that $\mmatrix A$ is symmetric, i.e., $\mmatrix A = \mmatrix A^T$, and constant, and that the system is hyperbolic so that $\mmatrix A = \mmatrix P \Lambda \mmatrix P^{-1}$. With $\mmatrix A$ symmetric, $\mmatrix P^{-1}= \mmatrix P^{T}$. We also assume that there are no zero eigenvalues, although this is also not essential and does not change the results. 

The boundary operators, $BL_1, BR_2$ are linear, characteristic, and set the incoming characteristic states at the physical boundaries to zero. 
Since the problem \eqref{eq:SystemForOmega1D} is hyperbolic, we can define the characteristic variables
\begin{equation} 
\statevec w = \mmatrix P^{-1}\statevecGreek \omega = \left[\begin{array}{c}\statevec w^{+} \\\statevec w^{-}\end{array}\right],
\label{eq:wDefinition}
\end{equation}
where $\statevec w^{+}$ is associated with the positive eigenvalues of $\mmatrix A$ and $\statevec w^{-}$ is associated with the negative ones. Let us also write
 \begin{equation}
 \Lambda = \left[\begin{array}{cc}\bar \Lambda^+ & 0 \\0 & \bar \Lambda^-\end{array}\right],\quad
  \Lambda^{+} = \left[\begin{array}{cc}\bar \Lambda^+ & 0 \\0 & 0\end{array}\right],\quad \Lambda^{-} = \left[\begin{array}{cc}0 & 0 \\0 & \bar \Lambda^-\end{array}\right].
 \end{equation} 
 Finally, we define $\mmatrix A^\pm = \oneHalf \left\{ \mmatrix A \pm |\mmatrix A|\right\}$ so that $\mmatrix A = \mmatrix A^+ + \mmatrix A^-$, where $\mmatrix A^+>0$ and $\mmatrix A^-<0$.
In terms of the characteristic variables, we have the boundary operators $BL_1$ and $BR_2$ to implement the characteristic conditions
\begin{equation}
\statevec w^+(a,t) = \statevec 0
,\quad
\statevec w^-(d,t) = \statevec 0,
\end{equation}
in the forms
\begin{equation}
BL_1\left( \statevecGreek \omega(a,t)\right) = \mmatrix P \left[\begin{array}{c}\statevec 0 \\\statevec w^{-}(a,t)\end{array}\right] 
,\quad
BR_2\left( \statevecGreek \omega(d,t)\right) = \mmatrix P \left[\begin{array}{c}\statevec w^{+}(d,t) \\ \statevec 0\end{array}\right].
\label{eq:System1DBCs}
\end{equation}

We extend the associated overset domain problem \eqref{eq:L-RSystem} to the system
\begin{equation}
LS\;\left\{\begin{gathered}
\statevec u_t + \mmatrix A \statevec u_x = 0, x\in\Omega_u\hfill\\
\statevec u(a,t) = BL_1\left( \statevec u(a,t)\right)\hfill\\
\statevec u(c,t) = BL_2\left( \statevec u(c,t), \statevec v(c,t)\right)\hfill\\
\statevec u(x,0) = \statevecGreek \omega_0(x) \hfill
\end{gathered} \right. \quad
RS\;\left\{\begin{gathered}
\statevec v_t + \mmatrix A \statevec v_x = 0, x\in\Omega_v\hfill\\
\statevec v(b,t) = BR_1\left( \statevec u(b,t),\statevec v(b,t)\right)\hfill\\
\statevec v(d,t) = BR_2\left( \statevec v(d,t)\right)\hfill\\
\statevec v(x,0) = \statevecGreek \omega_0(x) \hfill
\end{gathered} \right.
\label{eq:LS-RSSystem}
\end{equation}
for the same domains shown in Fig. \ref{fig:Overlap1D}. 
The physical boundary conditions are the same as for the full domain problem. The interface operators $BL_2$ and $BR_1$ implement the characteristic coupling conditions
\begin{equation}
\statevec w^-_u(c,t) = \statevec w^-_v(c,t)
,\quad
\statevec w^+_v(b,t) = \statevec w^+_u(b,t)
\label{eq:1DCharCoupling}
\end{equation}
in the forms
\begin{equation}
BL_2\left( \statevec u(c,t), \statevec v(c,t)\right) =\mmatrix P \left[\begin{array}{c}\statevec w_u^{+}(c,t) \\\statevec w^-_v(c,t)\end{array}\right] 
,\quad
BR_1\left( \statevec u(b,t), \statevec v(b,t)\right) =\mmatrix P \left[\begin{array}{c}\statevec w_u^{+}(b,t) \\\statevec w^-_v(b,t)\end{array}\right],
\label{eq:System1InterfaceCs}
\end{equation}
where $\statevec w_u$ is given by \eqref{eq:wDefinition}, with $\statevec u$ replacing $\statevecGreek\omega$, and similarly for $\statevec w_v$.

The proof of energy boundedness for the overset problem \eqref{eq:LS-RSSystem} will use the following energy statements:
\begin{lem}
Over any domain $[\ell,r]$, the energy formula for $\statevec q = \statevec u \text{ or } \statevec v$
\begin{equation}
\frac{d}{dt}\inorm{\statevec q}_{[\ell,r]}^2 = 
 \left.\statevec q^T\mmatrix A^+\statevec q\right|_\ell
-\left.\statevec q^T\left|\mmatrix A^-\right|\statevec q\right|_\ell 
- \left.\statevec q^T\mmatrix A^+\statevec q\right|_r 
+\left.\statevec q^T\left|\mmatrix A^-\right|\statevec q\right|_r 
\label{eq:GeneralEnergyStatement}
\end{equation}
holds, where $\inorm{\statevec q}_{[\ell,r]}^2 \equiv \int_\ell^r \statevec q^T \statevec q dx$.
\label{lem:NormOnQEquality}
\end{lem}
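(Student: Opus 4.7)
The plan is a standard energy-method calculation, localized to a general subinterval $[\ell,r]$. Since $\statevec q \in \{\statevec u,\statevec v\}$ satisfies the same hyperbolic system $\statevec q_t + \mmatrix A \statevec q_x = \statevec 0$, I would first left-multiply by $\statevec q^T$ and integrate over $[\ell,r]$ to obtain
\begin{equation}
\tfrac{1}{2}\frac{d}{dt}\inorm{\statevec q}_{[\ell,r]}^2 + \int_\ell^r \statevec q^T \mmatrix A \statevec q_x\,dx = 0.
\end{equation}
Because $\mmatrix A$ is symmetric and constant, the integrand is an exact derivative, $\statevec q^T \mmatrix A \statevec q_x = \tfrac{1}{2}(\statevec q^T \mmatrix A \statevec q)_x$, so integration by parts collapses the spatial integral to boundary contributions and yields
\begin{equation}
\frac{d}{dt}\inorm{\statevec q}_{[\ell,r]}^2 = \left.\statevec q^T \mmatrix A \statevec q\right|_\ell - \left.\statevec q^T \mmatrix A \statevec q\right|_r.
\end{equation}

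The remaining task is cosmetic: split $\mmatrix A$ into its positive and negative parts to expose the ``incoming'' and ``outgoing'' contributions separately. Using the definition $\mmatrix A^\pm = \tfrac{1}{2}(\mmatrix A \pm |\mmatrix A|)$ together with $\mmatrix A = \mmatrix P \Lambda \mmatrix P^T$ and $|\mmatrix A| = \mmatrix P |\Lambda| \mmatrix P^T$, one has $\mmatrix A^- = -|\mmatrix A^-|$ as a matrix identity (since $\mmatrix A^-$ is negative semidefinite), and therefore
\begin{equation}
\mmatrix A = \mmatrix A^+ + \mmatrix A^- = \mmatrix A^+ - |\mmatrix A^-|.
\end{equation}
Substituting this decomposition into both boundary terms of the previous display gives exactly the four-term expression \eqref{eq:GeneralEnergyStatement}.

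There is no real obstacle here: the only point requiring a brief justification is the identity $\mmatrix A^- = -|\mmatrix A^-|$, which follows immediately from the spectral representation and the assumption of symmetry (so that $\mmatrix P^{-1} = \mmatrix P^T$). No boundary conditions are used, which is essential for the later applications of the lemma, since it will be invoked on the full subdomains $\Omega_u$, $\Omega_v$ as well as on their complementary pieces (e.g. $\Omega_{\bar u}$) in order to re-express interior interface values in terms of time derivatives of complementary-domain energies, exactly as was done in the scalar case via \eqref{eq:ubarvalue}.
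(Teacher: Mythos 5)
Your proposal is correct and follows essentially the same route as the paper's own proof: multiply by $\statevec q^T$, integrate over $[\ell,r]$, use the symmetry of $\mmatrix A$ to reduce the spatial term to boundary evaluations, and then substitute $\mmatrix A = \mmatrix A^+ - |\mmatrix A^-|$. The only difference is presentational — you phrase the integration by parts as recognizing an exact derivative and add a short justification of $\mmatrix A^- = -|\mmatrix A^-|$, which the paper simply asserts.
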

\begin{proof}
Multiplying the PDE by $\statevec q^T$ and integrating over the interval $[\ell,r]$,
\begin{equation}
\int_\ell^r \statevec q^T \statevec q_t dx + \int_\ell^r \statevec q^T  \mmatrix A \statevec q_x dx = 0.
\end{equation}
Integrating the second integral by parts, and using the fact that the coefficient matrix is symmetric,
\begin{equation}
\oneHalf \frac{d}{dt}\inorm{\statevec q}^2 + \oneHalf \left. \statevec q^T  \mmatrix A \statevec q\right|_\ell^r = 0.
\end{equation}
But $\mmatrix A = \mmatrix A^+ + \mmatrix A^-$ and $\mmatrix A^- = -| \mmatrix A^-|$, so
\begin{equation}
 \frac{d}{dt}\inorm{\statevec q}^2 =-  \left. \statevec q^T  \mmatrix A^+ \statevec q\right|_\ell^r + \left. \statevec q^T  |\mmatrix A^-| \statevec q\right|_\ell^r,
\end{equation}
from which the result follows.
\end{proof}

Note that $\statevec q^T\mmatrix A^\pm\statevec q = \statevec w^{\pm,T}\bar \Lambda^\pm  \statevec w^{\pm}$. Therefore, as a corollary to Lemma \ref{lem:NormOnQEquality}, we have
\begin{cor}
The equality \eqref{eq:GeneralEnergyStatement} can be written in terms of the characteristic variables as
\begin{equation}
\frac{d}{dt}\inorm{\statevec q}_{[\ell,r]}^2 = 
 \left.\statevec w^{+,T}\bar \Lambda^+\statevec w^+\right|_\ell 
-\left.\statevec w^{-,T}\left| \bar \Lambda^-\right| \statevec w^-\right|_\ell 
- \left.\statevec w^{+,T}\bar \Lambda^+\statevec w^+\right|_r 
+\left.\statevec w^{-,T}\left| \bar \Lambda^-\right| \statevec w^-\right|_r .
\label{eq:GeneralEnergyStatementChar}
\end{equation}
\label{cor:Corollary1}
\end{cor}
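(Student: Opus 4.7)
The plan is to derive Corollary \ref{cor:Corollary1} as a direct substitution into the identity of Lemma \ref{lem:NormOnQEquality}, so I expect no real obstacle — only some care with the block structure of $\Lambda^\pm$ and the signs associated with $\mmatrix A^-$.

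First, I would exploit the symmetry of $\mmatrix A$, which gives $\mmatrix P^{-1}=\mmatrix P^T$, so that $\mmatrix A^{\pm} = \mmatrix P\,\Lambda^{\pm}\,\mmatrix P^T$ with $\Lambda^{\pm}$ the block-diagonal matrices introduced earlier. Setting $\statevec w = \mmatrix P^T \statevec q$ with block partition $\statevec w = (\statevec w^+,\statevec w^-)^T$, the change of variables converts the quadratic form associated with $\mmatrix A^+$ into its characteristic counterpart:
\begin{equation*}
\statevec q^T \mmatrix A^+ \statevec q = \statevec w^T \Lambda^+ \statevec w = \statevec w^{+,T}\bar\Lambda^+\statevec w^+,
\end{equation*}
the second equality being immediate from the block structure of $\Lambda^+$ (its lower block is zero).

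Next, I would treat the $|\mmatrix A^-|$ term. Since $\mmatrix A^-<0$ and $\bar\Lambda^-<0$, one has $|\mmatrix A^-|=-\mmatrix A^-$ and $|\bar\Lambda^-|=-\bar\Lambda^-$, and hence $|\mmatrix A^-| = \mmatrix P\,|\bar\Lambda^-|\,\mmatrix P^T$ under the appropriate block embedding. Applying the same change of variables then yields
\begin{equation*}
\statevec q^T |\mmatrix A^-| \statevec q = -\statevec w^T \Lambda^- \statevec w = \statevec w^{-,T}\,|\bar\Lambda^-|\,\statevec w^-.
\end{equation*}

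Finally, I would substitute these two pointwise identities, which hold at every $x$, into the four boundary terms on the right-hand side of equation \eqref{eq:GeneralEnergyStatement}, evaluated at $x=\ell$ and $x=r$. Since the substitution is linear and pointwise, it commutes with the endpoint evaluation, and the result is precisely \eqref{eq:GeneralEnergyStatementChar}, completing the proof.
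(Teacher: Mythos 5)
Your proposal is correct and follows essentially the same route as the paper, which simply notes the pointwise identity $\statevec q^T\mmatrix A^\pm\statevec q = \statevec w^{\pm,T}\bar \Lambda^\pm \statevec w^{\pm}$ (a consequence of $\mmatrix A^\pm = \mmatrix P\Lambda^\pm\mmatrix P^T$ and $\statevec w = \mmatrix P^{-1}\statevec q = \mmatrix P^T\statevec q$) and substitutes it into the boundary terms of Lemma \ref{lem:NormOnQEquality}. Your additional care with the sign flip $|\mmatrix A^-| = -\mmatrix A^-$ and the block structure is exactly the right bookkeeping, so nothing is missing.
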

So, for instance, the first and last terms on the right of \eqref{eq:GeneralEnergyStatement} and \eqref{eq:GeneralEnergyStatementChar}
correspond to incoming characteristic information (to be specified by data), whereas the middle two terms correspond to energy dissipation through wave propagation out of the domain.
\subsubsection{Well-Posedness for Diagonalizable Systems}

The system problem \eqref{eq:LS-RSSystem} is energy bounded, which we show by exploiting the fact that the coefficient matrix is diagonalizable.
In one space dimension (but generally not in two or three), the systems of equations \eqref{eq:LS-RSSystem} can be decoupled into
\begin{equation}
\frac{d}{dt}\statevec w^\pm + \bar\Lambda^\pm \statevec w_x^\pm = 0,
\end{equation}
each of which can be treated as in the scalar problem. Therefore, when we insert the inflow charcteristic boundary condition on $\statevec w^-_u$ at $x=c$ and the trivial inflow condition on $\statevec w^+_u$ at $x=a$,
\begin{equation}
\begin{split}
&\frac{d}{dt}\inorm{\statevec w^+_u}_{\Omega_u}^2 = -\left. \statevec w_u^{+,T}\bar\Lambda^+ \statevec w_u^{+}\right|_c,
\\&
\frac{d}{dt}\inorm{\statevec w^-_u}_{\Omega_u}^2 = -\left. \statevec w_u^{-,T}\left| \bar\Lambda^-\right| \statevec w_u^{-}\right|_a + \statevec w_v^{-,T}\left. \left| \bar\Lambda^-\right| \statevec w_v^{-}\right|_c.
\end{split}
\end{equation}
We still have (T1), the outflow trick \eqref{eq:PartialBoundL} on $\statevec w^+_u$,
\begin{equation}
\left. \statevec w_u^{+,T}\bar\Lambda^+ \statevec w_u^{+}\right|_b = -\frac{d}{dt}\inorm{\statevec w^+_u}_{ \Omega_{\bar u}}^2,
\label{eq:WplusAtb}
\end{equation}
which couples the state at the interior point $x=b$ to the initial and (trivial) inflow boundary data.

For the problem on the right, RS in \eqref{eq:LS-RSSystem}, we have a similar decomposition,
\begin{equation}
\begin{split}
&\frac{d}{dt}\inorm{\statevec w^+_v}_{\Omega_v}^2 = \left. \statevec w_u^{+,T}\bar\Lambda^+ \statevec w_u^{+}\right|_b 
-\left.  \statevec w_v^{+,T}\bar\Lambda^+ \statevec w_v^{+}\right|_d,
\\&
\frac{d}{dt}\inorm{\statevec w^-_v}_{\Omega_v}^2 =   -\left. \statevec w_v^{-,T}\left|\bar\Lambda^-\right| \statevec w_v^{-}\right|_b,
\end{split}
\end{equation}
plus
\begin{equation}
\left. \statevec w_v^{-,T}\left|\bar\Lambda^- \right| \statevec w_v^{-}\right|_c = -\frac{d}{dt}\inorm{\statevec w^-_v}_{ \Omega_{\bar v}}^2.
\label{eq:WminusAtc}
\end{equation}

Therefore,
\begin{equation}
\begin{split}
&\frac{d}{dt}\inorm{\statevec w^+_u}_{\Omega_u}^2 \le 0,
\\&
\frac{d}{dt}\inorm{\statevec w^-_u}_{\Omega_u}^2\le  -\frac{d}{dt}\inorm{\statevec w^-_v}_{ \Omega_{\bar v}}^2
\end{split}
\label{eq:wubounds}
\end{equation}
and
\begin{equation}
\begin{split}
&\frac{d}{dt}\inorm{\statevec w^+_v}_{\Omega_v}^2 \le -\frac{d}{dt}\inorm{\statevec w^+_u}_{ \Omega_{\bar u}}^2,
\\&
\frac{d}{dt}\inorm{\statevec w^-_v}_{\Omega_v}^2 \le 0.
\end{split}
\label{eq:wvbounds}
\end{equation}

We add the contributions from \eqref{eq:wubounds} and \eqref{eq:wvbounds} to get the total energy. Note that \begin{equation}
    \inorm{\statevec w}^2 = \inorm{\statevec w^+}^2 + \inorm{\statevec w^-}^2.
\end{equation}
With $\mmatrix A$ symmetric, $\mmatrix P^{-1}= \mmatrix P^{T}$, so $\mmatrix P$ is unitary. Therefore, for $\statevec q = \statevec u\;\rm{or}\; \statevec v$, $\inorm{\statevec w}^2  = \inorm{\statevec q}^2$ and
\begin{equation}
\inorm{\statevec q}^2 = \inorm{\statevec w^+}^2 + \inorm{\statevec w^-}^2.
\label{eq:NormDecomp}
\end{equation}
Then when we add the parts in \eqref{eq:wubounds} and \eqref{eq:wvbounds},
\begin{equation}
\begin{split}
&\frac{d}{dt}\inorm{\statevec u}_{\Omega_u}^2+\frac{d}{dt}\inorm{\statevec w^-_v}_{ \Omega_{\bar v}}^2\le 0,
\\&
\frac{d}{dt}\inorm{\statevec v}_{\Omega_v}^2+ \frac{d}{dt}\inorm{\statevec w^+_u}_{ \Omega_{\bar u}}^2\le 0.
\end{split}
\label{eq:U&VsystemEnergy1}
\end{equation}
Integrating \eqref{eq:U&VsystemEnergy1} in time,
\begin{equation}
\begin{split}
&\inorm{\statevec u(T)}_{\Omega_u}^2+\inorm{\statevec w^-_v(T)}_{ \Omega_{\bar v}}^2\le \inorm{\statevec u(0)}_{\Omega_u}^2+\inorm{\statevec w^-_v(0)}_{ \Omega_{\bar v}}^2,
\\&
\inorm{\statevec v(T)}_{\Omega_v}^2+ \inorm{\statevec w^+_u(T)}_{ \Omega_{\bar u}}^2\le \inorm{\statevec v(0)}_{\Omega_v}^2+ \inorm{\statevec w^+_u(0)}_{ \Omega_{\bar u}}^2.
\end{split}
\label{eq:Sharpest1DsystemEstimate}
\end{equation}
Finally, using \eqref{eq:NormDecomp} again, 
\begin{equation}
\begin{split}
&\inorm{\statevec u(T)}_{\Omega_u}^2 \le \inorm{\statevec u(0)}_{\Omega_u}^2+\inorm{\statevec v(0)}_{ \Omega_{\bar v}}^2,
\\&
\inorm{\statevec v(T)}_{\Omega_v}^2 \le \inorm{\statevec v(0)}_{\Omega_v}^2+ \inorm{\statevec u(0)}_{ \Omega_{\bar u}}^2,
\end{split}
\label{eq:SharperSystemEstimate}
\end{equation}
and
\begin{equation}
\begin{split}
&\inorm{\statevec u(T)}_{\Omega_u}^2 \le \inorm{\statevecGreek\omega_0}_{\Omega_u}^2+\inorm{\statevecGreek\omega_0}_{ \Omega_{\bar v}}^2 = \inorm{\statevecGreek\omega_0}_{\Omega}^2,
\\&
\inorm{\statevec v(T)}_{\Omega_v}^2 \le \inorm{\statevecGreek\omega_0}_{\Omega_v}^2+ \inorm{\statevecGreek\omega_0}_{ \Omega_{\bar u}}^2 =  \inorm{\statevecGreek\omega_0}_{\Omega}^2.
\end{split}
\label{eq:EnergyBndnessWithDiagonalization}
\end{equation}

Therefore, we have proved
\begin{thm}
The overset domain problem \eqref{eq:LS-RSSystem} in $1\rm D$ with characteristic boundary \eqref{eq:System1DBCs} and coupling conditions \eqref{eq:1DCharCoupling} is energy bounded, with the energy satisfying \eqref{eq:EnergyBndnessWithDiagonalization}.
The sharpest estimate, \eqref{eq:Sharpest1DsystemEstimate}, which expresses (T2), shows that with decoupling, the energy for $\statevec u$ depends only on its initial data and the left-going energy from $\statevec v$, and similarly for $\statevec v$. 
\end{thm}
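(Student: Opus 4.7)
The plan is to exploit the diagonalizability of $\mmatrix A$ to reduce the system problem to a pair of decoupled scalar transport equations, to which the techniques (T1) and (T2) developed for the scalar problem can be applied componentwise. First I would introduce the characteristic variables $\statevec w_u = \mmatrix P^{-1}\statevec u$ and $\statevec w_v = \mmatrix P^{-1}\statevec v$ so that each system in \eqref{eq:LS-RSSystem} splits into independent scalar systems for $\statevec w^+$ (positive speeds $\bar\Lambda^+$) and $\statevec w^-$ (negative speeds $\bar\Lambda^-$). Under this change of variables the physical boundary conditions \eqref{eq:System1DBCs} and the interface coupling \eqref{eq:1DCharCoupling} act diagonally, so the problem reduces to eight scalar-type energy statements coming from Corollary \ref{cor:Corollary1} applied on $\Omega_u$ and $\Omega_v$ for each sign.

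Next I would write out the four energy rate identities on $\Omega_u$ and $\Omega_v$ using Lemma \ref{lem:NormOnQEquality} in its characteristic form. The trivial inflow at $x=a$, the interface condition $\statevec w_u^-(c,t)=\statevec w_v^-(c,t)$, the interface condition $\statevec w_v^+(b,t)=\statevec w_u^+(b,t)$, and the trivial inflow at $x=d$ handle all characteristic data \emph{except} the interior values $\statevec w_u^{+,T}\bar\Lambda^+\statevec w_u^+\big|_b$ and $\statevec w_v^{-,T}|\bar\Lambda^-|\statevec w_v^-\big|_c$, which cannot be bounded directly by initial or boundary data and constitute the principal obstacle.

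The decisive step is to apply technique (T1) on the complementary intervals: performing the same integration by parts on $\Omega_{\bar u}=[b,c]$ for $\statevec w_u^+$ and on $\Omega_{\bar v}=[b,c]$ for $\statevec w_v^-$ produces the identities \eqref{eq:WplusAtb} and \eqref{eq:WminusAtc}, rewriting each obstructive boundary term as the negative time derivative of an overlap norm. Substituting these identities into the rate equations for $\statevec u$ and $\statevec v$ yields \eqref{eq:wubounds} and \eqref{eq:wvbounds}, in which every problematic contribution has been moved to the left as a pure time derivative on a complementary subdomain.

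Finally I would sum the $+$ and $-$ component inequalities on each domain and invoke the Parseval-type identity $\inorm{\statevec q}^2 = \inorm{\statevec w^+}^2 + \inorm{\statevec w^-}^2$, which follows from $\mmatrix A$ being symmetric and hence $\mmatrix P$ unitary, to replace characteristic norms with norms of $\statevec u$ and $\statevec v$. Integrating in time delivers the sharp estimate \eqref{eq:Sharpest1DsystemEstimate} (technique (T2)); dropping the nonnegative complementary-domain terms and using additivity of the norm on the disjoint union $\Omega_u\cup\Omega_{\bar u}=\Omega = \Omega_v\cup\Omega_{\bar v}$ together with $\inorm{\statevecGreek\omega_0}_{\Omega_{\bar v}}^2 \le \inorm{\statevecGreek\omega_0}_{\Omega_v}^2$ (and symmetrically) gives the looser bound \eqref{eq:EnergyBndnessWithDiagonalization}. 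The hardest conceptual point is exactly the use of (T1): recognizing that the uncontrolled interior coupling terms can be turned into energy rates on the overlap, so that when added they cancel against the complementary domain's energy — a maneuver that visibly relies on decoupling into scalar components and foreshadows why the argument must be replaced by a penalty formulation once diagonalization is unavailable in higher dimensions.
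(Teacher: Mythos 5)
Your overall route is the same as the paper's: diagonalize into characteristic variables, write the componentwise energy rates from Corollary \ref{cor:Corollary1}, isolate the two uncontrolled interface terms $\left.\statevec w_u^{+,T}\bar\Lambda^+\statevec w_u^+\right|_b$ and $\left.\statevec w_v^{-,T}|\bar\Lambda^-|\statevec w_v^-\right|_c$, convert them to time derivatives via (T1), sum using $\inorm{\statevec q}^2=\inorm{\statevec w^+}^2+\inorm{\statevec w^-}^2$ (unitarity of $\mmatrix P$), and integrate in time. That is exactly the paper's argument.

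There is, however, one concrete error that would derail the decisive step as you have literally written it: you identify both complementary domains with the overlap, $\Omega_{\bar u}=\Omega_{\bar v}=[b,c]$. They are in fact $\Omega_{\bar u}=[a,b]$ and $\Omega_{\bar v}=[c,d]$, the portions of $\Omega_u$ and $\Omega_v$ \emph{outside} the overlap, each abutting a physical boundary where trivial inflow data is prescribed. This matters: integrating the $\statevec w_u^+$ energy over $[b,c]$ gives $\frac{d}{dt}\inorm{\statevec w_u^+}_{[b,c]}^2 = \left.\statevec w_u^{+,T}\bar\Lambda^+\statevec w_u^+\right|_b - \left.\statevec w_u^{+,T}\bar\Lambda^+\statevec w_u^+\right|_c$, so solving for the term at $b$ leaves an uncontrolled boundary term at $c$ and the wrong sign on the time derivative. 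The identity \eqref{eq:WplusAtb} is only obtained on $[a,b]$, where the trivial condition $\statevec w_u^+(a,t)=\statevec 0$ removes the other endpoint; likewise \eqref{eq:WminusAtc} requires $[c,d]$ with $\statevec w_v^-(d,t)=\statevec 0$. Relatedly, the decomposition at the end should read $\Omega = \Omega_u\sqcup\Omega_{\bar v} = \Omega_v\sqcup\Omega_{\bar u}$ (your $\Omega_u\cup\Omega_{\bar u}=\Omega$ is false for either reading of $\Omega_{\bar u}$); it is this cross pairing that turns \eqref{eq:SharperSystemEstimate} into the exact identity $\inorm{\statevecGreek\omega_0}_{\Omega_u}^2+\inorm{\statevecGreek\omega_0}_{\Omega_{\bar v}}^2=\inorm{\statevecGreek\omega_0}_{\Omega}^2$ of \eqref{eq:EnergyBndnessWithDiagonalization}, with no need for the inequality $\inorm{\statevecGreek\omega_0}_{\Omega_{\bar v}}^2\le\inorm{\statevecGreek\omega_0}_{\Omega_v}^2$. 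With the complementary domains corrected, your proof coincides with the paper's.
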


As in the scalar case, the overset system problem \eqref{eq:LS-RSSystem} is well-posed and provides the same solution as the original, full domain problem \eqref{eq:SystemForOmega1D}, which we state as

\begin{thm}
The solutions of the overset domain problem \eqref{eq:LS-RSSystem} match those of the original full domain problem, \eqref{eq:SystemForOmega1D}, i.e. $\statevec u = \statevecGreek\omega$ and $\statevec v = \statevecGreek\omega$. Under the assumption that the original problem is well-posed, the overset domain problem is well-posed.
\label{thm:1DSystemWellPosedness}
\end{thm}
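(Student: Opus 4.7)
The plan is to mirror the scalar argument used for Thm.~\ref{thm:WellPosednessScalar1D}: introduce the perturbations $\statevec u' = \statevec u - \statevecGreek\omega$ on $\Omega_u$ and $\statevec v' = \statevec v - \statevecGreek\omega$ on $\Omega_v$, show that the pair $(\statevec u',\statevec v')$ satisfies an overset system of the same form as \eqref{eq:LS-RSSystem} with vanishing data, and then invoke the energy estimate \eqref{eq:EnergyBndnessWithDiagonalization} already proved to conclude that both perturbations vanish identically.

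First I would subtract \eqref{eq:SystemForOmega1D} from \eqref{eq:LS-RSSystem}. Because $\mmatrix A$ is constant, both perturbations satisfy the homogeneous PDE $\statevec q'_t + \mmatrix A \statevec q'_x = \statevec 0$. The initial data vanish since $\statevec u(x,0) = \statevec v(x,0) = \statevecGreek\omega_0(x)$. Linearity of the characteristic projection operators $BL_1, BR_2$ gives the trivial physical boundary conditions $\statevec u'(a,t) = BL_1(\statevec u'(a,t))$ and $\statevec v'(d,t) = BR_2(\statevec v'(d,t))$ immediately. For the interior couplings, the key observation is that the global solution $\statevecGreek\omega$ is a single function, so its characteristic components trivially agree with themselves at $x=b$ and $x=c$; subtracting these tautologies from the coupling conditions \eqref{eq:1DCharCoupling} yields the homogeneous characteristic transfer conditions $\statevec w^-_{u'}(c,t) = \statevec w^-_{v'}(c,t)$ and $\statevec w^+_{v'}(b,t) = \statevec w^+_{u'}(b,t)$ in the form \eqref{eq:System1InterfaceCs}.

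Next I would apply the energy bound \eqref{eq:EnergyBndnessWithDiagonalization} to the perturbation problem. Since the initial data are zero on both subdomains, the right-hand sides collapse, forcing
\begin{equation}
\inorm{\statevec u'(T)}_{\Omega_u}^2 \le 0,\qquad \inorm{\statevec v'(T)}_{\Omega_v}^2 \le 0,
\end{equation}
so $\statevec u' \equiv \statevec 0$ on $\Omega_u$ and $\statevec v' \equiv \statevec 0$ on $\Omega_v$. Hence $\statevec u = \statevecGreek\omega$ and $\statevec v = \statevecGreek\omega$ on their respective subdomains. Well-posedness of the overset problem follows directly: existence is witnessed by the restriction of $\statevecGreek\omega$ to each subdomain (which exists by the assumed well-posedness of \eqref{eq:SystemForOmega1D}), uniqueness follows by applying the same argument to the difference of two candidate overset solutions, and continuous dependence on the data is exactly \eqref{eq:EnergyBndnessWithDiagonalization}.

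The only subtle point is the verification that the linear coupling operators $BL_2, BR_1$ yield homogeneous conditions for the perturbation pair. This reduces to the observation that because $\statevecGreek\omega$ is globally defined and single-valued, its incoming characteristic components at an arbitrary interior point coincide with themselves on both sides, so the characteristic coupling is automatically satisfied by $\statevecGreek\omega$ and subtracts out cleanly. With this in place, the remainder of the argument is a straightforward transcription of the scalar proof, and requires no computation beyond invoking \eqref{eq:EnergyBndnessWithDiagonalization}.
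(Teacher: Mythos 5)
Your proposal is correct and follows essentially the same route as the paper: subtract the full-domain problem from the overset problem, observe that the perturbations $\statevec u'$, $\statevec v'$ satisfy the same overset system with zero initial data (using linearity of the PDE and of the characteristic boundary and coupling operators), and invoke the energy bound \eqref{eq:EnergyBndnessWithDiagonalization} to force both perturbations to vanish. Your explicit verification that the interior coupling conditions become homogeneous---because the single-valued global solution $\statevecGreek\omega$ trivially satisfies the characteristic transfer conditions at $x=b$ and $x=c$---is a detail the paper leaves implicit, but it is the right justification and does not change the argument.
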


\begin{proof}
The difference between the systems in \eqref{eq:LS-RSSystem} and \eqref{eq:SystemForOmega1D}, gives equations for the differences $\statevec u' = \statevec u- \statevecGreek \omega$ and $\statevec v' = \statevec v - \statevecGreek \omega$, each of the same form but with trivial initial conditions. Then by \eqref{eq:EnergyBndnessWithDiagonalization},
\begin{equation}
\begin{split}
&\inorm{\statevec u'(T)}_{\Omega_u}^2 \le \inorm{\statevec u'(0)}_{\Omega_u}^2+\inorm{\statevec v'(0)}_{ \Omega_{\bar v}}^2=0,
\\&
\inorm{\statevec v'(T)}_{\Omega_v}^2 \le \inorm{\statevec v'(0)}_{\Omega_v}^2+ \inorm{\statevec u'(0)}_{ \Omega_{\bar u}}^2=0
\end{split}
\label{eq:EnergyBndnessWithDiagonalizationDelta}
\end{equation}
and the result follows as in Thm. \ref{thm:WellPosednessScalar1D}.
\end{proof}

\subsubsection{Energy without Characteristic Decoupling}
The proof of energy boundedness to get \eqref{eq:EnergyBndnessWithDiagonalization} used the fact that the characteristic variables are independent in the interior of the domains, not just at the boundaries. Physically, the decoupling allows energy propagating along left-- and right--going waves to be separated so that only incoming wave energies are included at the overlap boundaries, through \eqref{eq:WplusAtb} and \eqref{eq:WminusAtc}. The idea does not extend to greater than one space dimension since the matrices are generally not simultaneously diagonalizable. In this section we show what the energy method gives when we can't rely on diagonalizability.

For the overset domain problem, with (trivial characteristic, $\statevec w = 0$) inflow physical boundary conditions applied, Lemma \ref{lem:NormOnQEquality} says that
\begin{equation}\begin{split}
&\frac{d}{dt}\inorm{\statevec u}_{\Omega_u}^2 
=  -\left.\statevec u^T\left|\mmatrix A^-\right|\statevec u\right|_a 
- \left.\statevec u^T\mmatrix A^+\statevec u\right|_c 
+\left.\statevec u^T\left|\mmatrix A^-\right|\statevec u\right|_c 
\\&
\frac{d}{dt}\inorm{\statevec v}_{\Omega_v}^2 
= 
-\left.\statevec v^T\left|\mmatrix A^-\right|\statevec v\right|_b 
+\left.\statevec v^T\mmatrix A^+\statevec v\right|_b - \left.\statevec v^T\mmatrix A^+\statevec v\right|_d .
\end{split}
\end{equation}
We then impose the characteristic interface conditions and re-order terms
\begin{equation}\begin{split}
&\frac{d}{dt}\inorm{\statevec u}_{\Omega_u}^2 
=  -\left.\statevec u^T\left|\mmatrix A^-\right|\statevec u\right|_a 
- \left.\statevec u^T\mmatrix A^+\statevec u\right|_c 
+\left.\statevec v^T\left|\mmatrix A^-\right|\statevec v\right|_c 
\\&
\frac{d}{dt}\inorm{\statevec v}_{\Omega_v}^2 
=
-\left.\statevec v^T\left|\mmatrix A^-\right|\statevec v\right|_b 
- \left.\statevec v^T\mmatrix A^+\statevec v\right|_d  
+\left.\statevec u^T\mmatrix A^+\statevec u\right|_b .
\end{split}
\label{eq:EnergyInEachDomain}
\end{equation}

The first two terms in each of \eqref{eq:EnergyInEachDomain} are dissipative and represent loss of energy at the boundaries due to waves leaving the domain. The final term in each is due to the incoming information from the alternate domain. 

To get the energy from the complementary domains, we use \eqref{eq:GeneralEnergyStatement} again,
\begin{equation}
\frac{d}{dt}\inorm{\statevec u}_{\Omega_{\bar u}}^2 = 
\left.\statevec u^T\mmatrix A^+\statevec u\right|_a 
-\left.\statevec u^T\left|\mmatrix A^-\right|\statevec u\right|_a 
- \left.\statevec u^T\mmatrix A^+\statevec u\right|_b 
+\left.\statevec u^T\left|\mmatrix A^-\right|\statevec u\right|_b.
\label{eq:GeneralEnergyStatement2}
\end{equation}
Then after we apply the boundary condition $\statevec w_u = 0$ at $x=a$, the interface point value is
\begin{equation}
\left.\statevec u^T\mmatrix A^+\statevec u\right|_b  = 
-\frac{d}{dt}\inorm{\statevec u}_{\Omega_{\bar u}}^2
-\left.\statevec u^T\left|\mmatrix A^-\right|\statevec u\right|_a 
+\left.\statevec u^T\left|\mmatrix A^-\right|\statevec u\right|_b.
\label{eq:GeneralEnergyStatement3}
\end{equation}
Similarly,
\begin{equation}
\frac{d}{dt}\inorm{\statevec v}_{\Omega_{\bar v}}^2 = 
\left.\statevec v^T\mmatrix A^+\statevec v\right|_c 
-\left.\statevec v^T\left|\mmatrix A^-\right|\statevec v\right|_c
- \left.\statevec v^T\mmatrix A^+\statevec v\right|_d 
+\left.\statevec v^T\left|\mmatrix A^-\right|\statevec v\right|_d,
\label{eq:GeneralEnergyStatement4}
\end{equation}
so that after applying the boundary condition at $x = d$, the interface point value is
\begin{equation}
\left.\statevec v^T\left|\mmatrix A^-\right|\statevec v\right|_c = 
-\frac{d}{dt}\inorm{\statevec v}_{\Omega_{\bar v}}^2 
+\left.\statevec v^T\mmatrix A^+\statevec v\right|_c 
- \left.\statevec v^T\mmatrix A^+\statevec v\right|_d .
\label{eq:GeneralEnergyStatement5}
\end{equation}

Finally, we add the two equations in \eqref{eq:EnergyInEachDomain}, replace the terms from \eqref{eq:GeneralEnergyStatement3} and \eqref{eq:GeneralEnergyStatement5}, and use the fact that the outflow terms are dissipative to get the bound
\begin{equation}
\frac{d}{dt}\inorm{\statevec u}_{\Omega_{u}}^2 + \frac{d}{dt}\inorm{\statevec v}_{\Omega_{v}}^2 + \frac{d}{dt}\inorm{\statevec u}_{\Omega_{\bar u}}^2 + \frac{d}{dt}\inorm{\statevec v}_{\Omega_{\bar v}}^2\le
\left\{ \statevec v^T \mmatrix A^+ \statevec v - \statevec u^T\mmatrix A^+\statevec u\right\}_c + \left\{ \statevec u^T| \mmatrix A^-| \statevec u - \statevec v^T| \mmatrix A^-|\statevec v\right\}_b.
\label{eq:TotalEnergyDerivSystem}
\end{equation}
The bound has two positive terms, $\left.\statevec v^T \mmatrix A^+ \statevec v\right|_c$ and $\left.\statevec u^T| \mmatrix A^-| \statevec u\right|_b$, for which there is no initial or boundary data, so without diagonalization the energy method does not give an energy bound in terms of data.
When we integrate \eqref{eq:TotalEnergyDerivSystem} in time and use the inequality associated with the dissipative terms, we get the bound
\begin{equation}
\begin{split}
\inorm{\statevec u(T)}_{\Omega_{u}}^2 + \inorm{\statevec v(T)}_{\Omega_{v}}^2 + \inorm{\statevec u(T)}_{\Omega_{\bar u}}^2 + \inorm{\statevec v(T)}_{\Omega_{\bar v}}^2
\le&
\inorm{\statevec u_0}_{\Omega_{u}}^2 + \inorm{\statevec v_0}_{\Omega_{v}}^2 + \inorm{\statevec u_0}_{\Omega_{\bar u}}^2 + \inorm{\statevec v_0}_{\Omega_{\bar v}}^2 
\\&+ \int_0^T \left\{\statevec v^T \mmatrix A^+ \statevec v |_c+ \statevec u^T| \mmatrix A^-| \statevec u|_b\right\}dt.
\end{split}
\label{eq:TotalEnergySystem}
\end{equation}

\begin{rem}Overset techniques have been used for many years in multiple space dimensions where diagonalization is not possible, so one might ask why numerical schemes should work at all. Looking at \eqref{eq:TotalEnergyDerivSystem}, we see that in practice, with the two solutions being close to each other (differing, perhaps, by an amount depending on resolution and truncation error), the terms on the right of \eqref{eq:TotalEnergyDerivSystem},
\begin{equation}
\left\{ \statevec v^T \mmatrix A^+ \statevec v - \statevec u^T\mmatrix A^+\statevec u\right\}_c + \left\{ \statevec u^T| \mmatrix A^-| \statevec u - \statevec v^T| \mmatrix A^-|\statevec v\right\}_b,
\label{eq:UnwantedTerms}
\end{equation}
will likely be small, making the right hand side of \eqref{eq:TotalEnergySystem} overly pessimistic, and small enough that numerical dissipation elsewhere could dominate those terms.
\end{rem}

The result \eqref{eq:TotalEnergySystem} differs from that of the previous section, \eqref{eq:EnergyBndnessWithDiagonalization}, in that with decoupling it is possible to exclude interior and subsequently boundary contributions that cannot be bounded by data. 

\subsection{An Energy Bounded Overset Problem with a Weighted Multiple Penalty}\label{sec:WeightedMultiplePenalty}

The energy method did not show energy boundedness for the system problem without taking diagonalization into account. We now show that we can eliminate the unwanted terms on the right of \eqref{eq:TotalEnergySystem} by applying penalties in the domains, (T3), and pose a new energy bounded problem that does not require diagonalizability in the proof.

To eliminate the extra terms that arise in the interior of the domains, we apply the third technique, (T3), and impose additional interface conditions as penalty terms:
\begin{equation}
LS\;\left\{\begin{gathered}
\statevec u_t + \mmatrix A\statevec u_x + \mathcal L_u^b \left[ \mmatrix{$\Sigma$}_u^b (\statevec u-\statevec v)\right] + \mathcal L_u^c \left[ \mmatrix{$\Sigma$}_u^c (\statevec u-\statevec v)\right]= 0,\quad  x\in\Omega_u\hfill\\
\statevec u(a,t) = BL_1\left( \statevec u(a,t)\right)\hfill\\
\statevec u(x,0) = \statevecGreek \omega_0(x) \hfill
\end{gathered} \right.
\label{eq:LS-RSSystem-MPu}
\end{equation}
\begin{equation}
RS\;\left\{\begin{gathered}
\statevec v_t + \mmatrix A \statevec v_x + \mathcal L_v^b \left[  \mmatrix{$\Sigma$}_v^b (\statevec v-\statevec u)\right]+ \mathcal L_v^c \left[  \mmatrix{$\Sigma$}_v^c (\statevec v-\statevec u)\right]= 0,\quad x\in\Omega_v\hfill\\
\statevec v(d,t) = BR_2\left( \statevec v(d,t)\right)\hfill\\
\statevec v(x,0) = \statevecGreek \omega_0(x) \hfill
\end{gathered} \right.
\label{eq:LS-RSSystem-MPv}
\end{equation}
where $\mathcal L_{u}^b$ is a lifting operator \cite{Arnoldetal2002} that integrates over an interval $I$ containing $x=b$ as
\begin{equation}
    \int_I \statevecGreek\psi^T\mathcal L_u^b[\statevecGreek\phi]dx = \left.\statevecGreek\psi^T\statevecGreek\phi\right|_b,
\end{equation}
etc., and $\statevecGreek\psi$ is a test function. We must find specific conditions on the coupling matrices $\mmatrix{$\Sigma$}_u$ and $\mmatrix{$\Sigma$}_v$, so that the problem \eqref{eq:LS-RSSystem-MPu}--\eqref{eq:LS-RSSystem-MPv} is energy bounded and conservative.

\begin{rem}
The penalty at $x=c$ in the problem $LS$ and the penalty at $x=b$ in $RS$ are weak weighted enforcements of the (coupling) boundary conditions at those points.
\end{rem}

We first create weak forms of the PDEs in  \eqref{eq:LS-RSSystem-MPu}--\eqref{eq:LS-RSSystem-MPv} by multiplying by test functions $\statevecGreek\phi_u$ and $\statevecGreek\phi_v$, and then integrating over the two domains
\begin{equation}
\begin{gathered}
 \iprod{\statevecGreek\phi_u,\statevec u_t}_{\Omega_u} + \iprod{\statevecGreek\phi_u,\mmatrix A u_x }_{\Omega_u}+
 \left.\statevecGreek\phi_u^T\ \mmatrix{$\Sigma$}_u^b (\statevec u-\statevec v)\right|_b+
 \left.\statevecGreek\phi_u^T\ \mmatrix{$\Sigma$}_u^c (\statevec u-\statevec v)\right|_c= 0,\hfill\\
\iprod{\statevecGreek\phi_v,\statevec v_t}_{\Omega_v} + \iprod{\statevecGreek\phi_v,\mmatrix A \statevec v_x}_{\Omega_v} +  \left.\statevecGreek\phi_v^T \mmatrix{$\Sigma$}_v^b (\statevec v-\statevec u)\right|_b+   \left.\statevecGreek\phi_v^T \mmatrix{$\Sigma$}_v^c (\statevec v-\statevec u)\right|_c= 0.\hfill\\
\end{gathered}
\label{eq:WeakFormsFonFullDomains}
\end{equation}
To implement (T1), the overlap region has the weak forms
\begin{equation}
\begin{gathered}
 \iprod{\statevecGreek\phi_u,\statevec u_t}_{\Omega_O} + \iprod{\statevecGreek\phi_u,\mmatrix A \statevec u_x }_{\Omega_O}= 0\hfill\\
\iprod{\statevecGreek\phi_v,\statevec v_t}_{\Omega_O} + \iprod{\statevecGreek\phi_v,\mmatrix A \statevec v_x}_{\Omega_O} = 0.\hfill\\
\end{gathered}
\label{eq:WeakFormsFonOverlaps}
\end{equation}
Finally, we integrate the space derivative inner products in \eqref{eq:WeakFormsFonFullDomains} and \eqref{eq:WeakFormsFonOverlaps} by parts,
\begin{equation}
\begin{gathered}
 \iprod{\statevecGreek\phi_u,\statevec u_t}_{\Omega_u} 
 + \left.\statevecGreek\phi_u^T\mmatrix A \statevec u \right|_a^c 
 - \iprod{\statevecGreek\phi'_u,\mmatrix A \statevec u }_{\Omega_u}
 +\left.\statevecGreek\phi_u^T\ \mmatrix{$\Sigma$}_u^b (\statevec u-\statevec v)\right|_b+\left.\statevecGreek\phi_u^T\ \mmatrix{$\Sigma$}_u^c (\statevec u-\statevec v)\right|_c= 0\hfill\\
\iprod{\statevecGreek\phi_v,\statevec v_t}_{\Omega_v} +  \left.\statevecGreek\phi_v^T\mmatrix A \statevec v \right|_b^d 
- \iprod{\statevecGreek\phi'_v,\mmatrix A \statevec v}_{\Omega_v} +\left.\statevecGreek\phi_v^T  \mmatrix{$\Sigma$}_v^b (\statevec v-\statevec u)\right|_b+  \left.\statevecGreek\phi_v^T \mmatrix{$\Sigma$}_v^c (\statevec v-\statevec u)\right|_c= 0,\hfill\\
\end{gathered}
\label{eq:WeakFormsUV}
\end{equation}
and
\begin{equation}
\begin{gathered}
 \iprod{\statevecGreek\phi_u,\statevec u_t}_{\Omega_O} + \left.\statevecGreek\phi_u^T\mmatrix A \statevec u \right|_b^c - \iprod{\statevecGreek\phi'_u,\mmatrix A \statevec u}_{\Omega_O}= 0\hfill\\
\iprod{\statevecGreek\phi_v,\statevec v_t}_{\Omega_O} + \left.\statevecGreek\phi_v^T\mmatrix A \statevec v \right|_b^c - \iprod{\statevecGreek\phi'_v,\mmatrix A \statevec v}_{\Omega_O} = 0,\hfill\\
\end{gathered}
\label{eq:WeakFormsOverlap}
\end{equation}
where the prime represents the partial derivative with respect to $x$. Technically speaking, we apply the lifting operator $\mathcal L^b$ at $b - \epsilon$ and $\mathcal L^c$ at $c + \epsilon$. After integration we get \eqref{eq:WeakFormsUV} and \eqref{eq:WeakFormsOverlap} at those shifted points. We then take the limit as $\epsilon\rightarrow 0$, so the penalty does not appear in the overlap region.

\subsubsection{Energy Boundedness}\label{sec:1DEnergyBoundedness}
With specific conditions on the coupling matrices, $\mmatrix{$\Sigma$}_u$ and $\mmatrix{$\Sigma$}_u$, the overset problem
\eqref{eq:LS-RSSystem-MPu} and \eqref{eq:LS-RSSystem-MPv} is energy bounded. To show that, we set $\statevecGreek\phi_u = \statevec u$ and $\statevecGreek\phi_v = \statevec v$ in \eqref{eq:WeakFormsUV}--\eqref{eq:WeakFormsOverlap} to construct the energy in each domain. Since for any $\statevec q$, $\iprod{\statevec q,\mmatrix A \statevec q_x}_{[\ell,r]} = \oneHalf \statevec q^T A \statevec q|_{\ell}^r$,
\begin{equation}
\begin{gathered}
\frac{d}{dt}\inorm{\statevec u}_{\Omega_u}^2 + \left.\statevec u^T \mmatrix A \statevec u\right|_a^c + \left.2\statevec u^T \mmatrix{$\Sigma$}_u^b(\statevec  u - \statevec v)\right|_b + \left.2\statevec u^T\mmatrix{$\Sigma$}_u^c(\statevec  u - \statevec v)\right|_c= 0, \hfill\\
\frac{d}{dt}\inorm{\statevec v}_{\Omega_v}^2 + \left.\statevec v^T \mmatrix A \statevec v\right|_b^d 
+ \left.2\statevec v^T \mmatrix{$\Sigma$}_v^b(\statevec  v - \statevec u)\right|_b
+ \left.2\statevec v^T\mmatrix{$\Sigma$}_v^c(\statevec  v - \statevec u)\right|_c = 0.
\hfill\end{gathered}
\label{eq:OneDFullNormDerivativesW}
\end{equation}
We also have in the overlap region, $\Omega_O$, that
\begin{equation}
\begin{gathered}
\frac{d}{dt}\inorm{\statevec u}_{\Omega_O}^2 + \left.\statevec u^T \mmatrix A \statevec u\right|_b^c = 0 \\
\frac{d}{dt}\inorm{\statevec v}_{\Omega_O}^2 + \left.\statevec v^T \mmatrix A \statevec v\right|_b^c  = 0.
\end{gathered}
\label{eq:OneDOverlapNormDerivativesW}
\end{equation}

Now we note the following relationships between the norms in the various regions,
\begin{equation}
\begin{gathered}
\inorm{\statevec u}^2_{\Omega_u} = \inorm{\statevec u}^2_{\Omega_{\bar u}} + \inorm{\statevec u}^2_{\Omega_O},\\
\inorm{\statevec v}^2_{\Omega_v} = \inorm{\statevec v}^2_{\Omega_{\bar v}} + \inorm{\statevec v}^2_{\Omega_O}.\\
\end{gathered}
\end{equation}
Then it is also true that, taking $0< \eta< 1$,
\begin{equation}
\begin{gathered}
\inorm{\statevec u}^2_{\Omega_u} - \eta \ \inorm{\statevec u}^2_{\Omega_O} = \inorm{\statevec u}^2_{\Omega_{\bar u}} + (1-\eta)\inorm{\statevec u}^2_{\Omega_O}\ge 0,\\
\inorm{\statevec v}^2_{\Omega_v} - (1-\eta)  \inorm{\statevec v}^2_{\Omega_O} = \inorm{\statevec v}^2_{\Omega_{\bar v}} + \eta \inorm{\statevec v}^2_{\Omega_O}\ge 0\\
\end{gathered}
\label{eq:NewNorms1DW}
\end{equation}
define norms. From this observation we prove
\begin{lem}
For $0<\eta<1$, 
\begin{equation}
\begin{split}
E(\statevec u,\statevec v) 
&\equiv \inorm{\statevec u}^2_{\Omega_u} + \inorm{\statevec v}^2_{\Omega_v} 
 -  \left\{ \eta \inorm{\statevec u}^2_{\Omega_O} +  (1-\eta)\inorm{\statevec v}^2_{\Omega_O}\right\}
\\&
=\inorm{\statevec u}^2_{\Omega_{\bar u}} + (1-\eta)\inorm{\statevec u}^2_{\Omega_O} + \inorm{\statevec v}^2_{\Omega_{\bar v}} + \eta \inorm{\statevec v}^2_{\Omega_O}\ge 0
\end{split}
\label{eq:EnergyNorm}
\end{equation}
defines a norm that can be used to bound both $\statevec u$ and $\statevec v$. 
\label{lem:EBoundW}
\end{lem}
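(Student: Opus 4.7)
The plan is to derive the lemma directly from the two identities established in \eqref{eq:NewNorms1DW}, which already carry most of the content. Adding those two lines term-by-term gives
\begin{equation*}
E(\statevec u,\statevec v) = \bigl(\inorm{\statevec u}^2_{\Omega_u} - \eta \inorm{\statevec u}^2_{\Omega_O}\bigr) + \bigl(\inorm{\statevec v}^2_{\Omega_v} - (1-\eta)\inorm{\statevec v}^2_{\Omega_O}\bigr),
\end{equation*}
and substituting the right-hand sides of \eqref{eq:NewNorms1DW} supplies the second equality in \eqref{eq:EnergyNorm} immediately. The underlying fact is the disjoint decompositions $\Omega_u = \Omega_{\bar u}\cup\Omega_O$ and $\Omega_v = \Omega_{\bar v}\cup\Omega_O$, which split each $L^2$ norm on $\Omega_u$ or $\Omega_v$ into contributions from the non-overlapping piece and the overlap.

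Next I would verify the norm properties. Because $0<\eta<1$, both weights $(1-\eta)$ and $\eta$ are strictly positive, so $E(\statevec u,\statevec v)$ is a sum of four squared $L^2$ norms with positive coefficients over disjoint subregions. This yields $E\ge 0$ at once, with equality iff $\statevec u\equiv 0$ a.e.\ on $\Omega_u$ and $\statevec v\equiv 0$ a.e.\ on $\Omega_v$. The expression is a positive-definite symmetric bilinear form on pairs $(\statevec u,\statevec v)$, induced by the inner product
\begin{equation*}
\iprod{(\statevec u_1,\statevec v_1),(\statevec u_2,\statevec v_2)}_E = \iprod{\statevec u_1,\statevec u_2}_{\Omega_{\bar u}} + (1-\eta)\iprod{\statevec u_1,\statevec u_2}_{\Omega_O} + \iprod{\statevec v_1,\statevec v_2}_{\Omega_{\bar v}} + \eta\iprod{\statevec v_1,\statevec v_2}_{\Omega_O},
\end{equation*}
so $\sqrt{E}$ is a bona fide norm on the product space of pairs.

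Finally, to see that $E$ controls $\statevec u$ and $\statevec v$ separately, I would use $1/(1-\eta)\ge 1$ and $1/\eta\ge 1$ together with the disjoint decomposition to write
\begin{equation*}
\inorm{\statevec u}^2_{\Omega_u} = \inorm{\statevec u}^2_{\Omega_{\bar u}} + \inorm{\statevec u}^2_{\Omega_O} \le \frac{1}{1-\eta}\bigl(\inorm{\statevec u}^2_{\Omega_{\bar u}} + (1-\eta)\inorm{\statevec u}^2_{\Omega_O}\bigr) \le \frac{1}{1-\eta}\,E(\statevec u,\statevec v),
\end{equation*}
and symmetrically $\inorm{\statevec v}^2_{\Omega_v}\le E(\statevec u,\statevec v)/\eta$. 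There is essentially no analytical obstacle; the argument is pure bookkeeping once \eqref{eq:NewNorms1DW} is in hand. The only subtlety worth stressing is that \emph{norm} here refers to a norm on the space of pairs rather than on a single function, and that the constants $1/(1-\eta)$ and $1/\eta$ degenerate as $\eta\to 0$ or $\eta\to 1$, which foreshadows why the weighted energy must split the overlap strictly between the two subdomains.
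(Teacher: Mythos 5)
Your proposal is correct and follows essentially the same route as the paper: both rest on the disjoint decompositions $\Omega_u = \Omega_{\bar u}\cup\Omega_O$, $\Omega_v = \Omega_{\bar v}\cup\Omega_O$ together with the positivity of the weights $\eta$ and $1-\eta$, and both extract the individual bounds $\inorm{\statevec u}^2_{\Omega_u}\le E/(1-\eta)$ and $\inorm{\statevec v}^2_{\Omega_v}\le E/\eta$ by elementary regrouping (the paper pulls $(1-\eta)\inorm{\statevec u}^2_{\Omega_u}$ and $\eta\inorm{\statevec v}^2_{\Omega_v}$ out of $E$ directly, while you bound the $\statevec u$-part of $E$ from below; the two manipulations are equivalent). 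Your added remarks on the induced inner product and the degeneration of the constants as $\eta\to 0,1$ are a slight sharpening but do not change the argument.
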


\begin{proof}
The energy, $E$, is a convex linear combination of norms, and so with $E\ge 0$, it satisfies the usual requirements for a norm. Furthermore, if $E(\statevec u,\statevec v)\le C$ then
\begin{equation}
\begin{split}
E(\statevec u,\statevec v) &=  \inorm{\statevec u}^2_{\Omega_u}  - \eta \inorm{\statevec u}^2_{\Omega_O} + \inorm{\statevec v}^2_{\Omega_v} - (1-\eta)  \inorm{\statevec v}^2_{\Omega_O}
\\&
= (1-\eta)\inorm{\statevec u}^2_{\Omega_u} + \eta\left(\inorm{\statevec u}^2_{\Omega_u}  -  \inorm{\statevec u}^2_{\Omega_O} \right) 
+  \eta\inorm{\statevec v}^2_{\Omega_v} + (1-\eta)\left(\inorm{\statevec v}^2_{\Omega_v} -   \inorm{\statevec v}^2_{\Omega_O}\right)
\\&
= (1-\eta)\inorm{\statevec u}^2_{\Omega_u} + \eta\inorm{\statevec v}^2_{\Omega_v} + \left\{\eta\inorm{\statevec u}^2_{\Omega_{\bar u}} +  (1-\eta)\inorm{\statevec v}^2_{\Omega_{\bar v}} \right\}
\\& 
\le C.
\end{split}
\label{ew:newNormBoundDef1DW}
\end{equation}
Since each term is non-negative, each is bounded by $C$, from which it follows that $\inorm{\statevec u}^2_{\Omega_{u}} \le C/(1-\eta)$ and $\inorm{\statevec v}^2_{\Omega_{v}} \le C/\eta$.
\end{proof}
\begin{rem}
The norms of $\statevec u$ and $\statevec v$ represent the energy in each subdomain. Adding them gives the energy over the original domain $\Omega$ but double counts the overlap region. The energy $E(\statevec u,\statevec v)$ subtracts the extra contribution as a weighted sum of the overlap energy to represent the energy of the original problem for $\statevecGreek\omega$ (T2). From the PDE point of view, the choice of $\eta\in (0,1)$ is arbitrary, but may have relevance numerically when one solution may be more accurate than the other due to increased resolution, for instance.
\end{rem}

Motivated by \eqref{eq:NewNorms1DW} and \eqref{ew:newNormBoundDef1DW}, we combine \eqref{eq:OneDFullNormDerivativesW} and \eqref{eq:OneDOverlapNormDerivativesW} as in \eqref{eq:EnergyNorm}, and use the boundary conditions at $x=a$ and $x=d$, to get
\begin{equation}
\begin{split}
&\frac{d}{dt}\inorm{\statevec u}_{\Omega_u}^2 + \frac{d}{dt}\inorm{\statevec v}_{\Omega_v}^2  - \left\{ \eta \frac{d}{dt}\inorm{\statevec u}_{\Omega_{O}}^2 +
 (1-\eta)\frac{d}{dt}\inorm{\statevec v}_{\Omega_{O}}^2 \right\}
\\ &
+ \left\{(1-\eta) \statevec u^T A \statevec u - (1-\eta) \statevec v^T A \statevec v
+ 2\statevec u^T\ \mmatrix{$\Sigma$}_u^c(\statevec u - \statevec v) + 2\statevec v^T \mmatrix{$\Sigma$}_v^c(\statevec v - \statevec u)\right\}_c
\\&
+ \left\{  \eta \statevec u^T A \statevec u - \eta \statevec v^T A \statevec v  +
2\statevec u^T\ \mmatrix{$\Sigma$}_u^b(\statevec u - \statevec v) + 2\statevec v^T \mmatrix{$\Sigma$}_v^b(\statevec v - \statevec u)\right\}_b
\\&
\le0.
\end{split}
\label{eq:EnergyTimeDerivW}
\end{equation}
Let us write \eqref{eq:EnergyTimeDerivW} as
\begin{equation}
\frac{d}{dt} E + \mathcal P_b + \mathcal P_c
\le0,
\end{equation}
where 
\begin{equation}\begin{gathered}
\mathcal P_b = \left\{  \eta \statevec u^T A \statevec u - \eta \statevec v^T A \statevec v  +
2\statevec u^T\ \mmatrix{$\Sigma$}_u^b(\statevec u - \statevec v) + 2\statevec v^T \mmatrix{$\Sigma$}_v^b(\statevec v - \statevec u)\right\}_b ,\\
\mathcal P_c = \left\{(1-\eta) \statevec u^T A \statevec u - (1-\eta) \statevec v^T A \statevec v
+ 2\statevec u^T\ \mmatrix{$\Sigma$}_u^c(\statevec u - \statevec v) + 2\statevec v^T \mmatrix{$\Sigma$}_v^c(\statevec v - \statevec u)\right\}_c.
\end{gathered}
\label{eq:Pin1D}
\end{equation}
A sufficient condition for the overset domain problem to be energy bounded, then, is $\mathcal P_b, \mathcal P_c \ge 0$.

We can re-write $\mathcal P_{b,c}$ as
\begin{equation}
\mathcal P_{b,c} = \left. \left[\begin{array}{c}\statevec u \\\statevec v\end{array}\right]^T\mmatrix M_{b,c}  \left[\begin{array}{c}\statevec u \\\statevec v\end{array}\right] \right|_{b,c},
\end{equation}
where
\begin{equation}
\mmatrix M_b =\left[\begin{array}{cc}\eta A + 2\mmatrix{$\Sigma$}^b_u& -(\mmatrix{$\Sigma$}^b_u  + \mmatrix{$\Sigma$}^b_v)\\-(\mmatrix{$\Sigma$}^b_u+ \mmatrix{$\Sigma$}^b_v) & -\eta\mmatrix A + 2\mmatrix{$\Sigma$}^b_v \end{array}\right]
,\quad\mmatrix M_c =\left[\begin{array}{cc}(1-\eta) A + 2\mmatrix{$\Sigma$}^c_u& -(\mmatrix{$\Sigma$}^c_u  + \mmatrix{$\Sigma$}^c_v)\\-(\mmatrix{$\Sigma$}^c_u+ \mmatrix{$\Sigma$}^c_v) & -(1-\eta)\mmatrix A + 2\mmatrix{$\Sigma$}^c_v \end{array}\right].
\end{equation}
So, a sufficient condition for energy boundedness is that both $\mmatrix M_{b,c}\ge 0$. 
The matrices $\mmatrix M_{b,c}$ are of the same form,
\begin{equation}
\mmatrix M =\left[\begin{array}{cc}\beta A + 2\ \mmatrix{$\Sigma$}_u& -(\ \mmatrix{$\Sigma$}_u  +  \mmatrix{$\Sigma$}_v)\\-(\ \mmatrix{$\Sigma$}_u+  \mmatrix{$\Sigma$}_v) & -\beta\mmatrix A + 2 \mmatrix{$\Sigma$}_v \end{array}\right],
\end{equation}
where $\beta = \eta$ at $x=b$ and $\beta = (1-\eta)$ at $x=c$.

The conditions for $\mathcal P_b, \mathcal P_c \ge 0$ can be found by 
rotating the matrix $\mmatrix M$ using
\begin{equation}
 \left[\begin{array}{c}\statevec u \\\statevec v\end{array}\right] 
 = \oneHalf \left[\begin{array}{cc}I & I \\-I & I\end{array}\right] \left[\begin{array}{c}\statevec u - \statevec v \\\statevec u + \statevec v\end{array}\right] \equiv \mmatrix R \left[\begin{array}{c}\statevec u - \statevec v \\\statevec u + \statevec v\end{array}\right].
\end{equation}
Then
\begin{equation}
\mathcal P = \left[\begin{array}{c}\statevec u - \statevec v \\\statevec u +\statevec v\end{array}\right]^T\mmatrix R^T \mmatrix M \mmatrix R \left[\begin{array}{c}\statevec u - \statevec v \\\statevec u +\statevec v\end{array}\right] \equiv  \left[\begin{array}{c}\statevec u - \statevec v \\\statevec u +\statevec v\end{array}\right]^T\tilde{\mmatrix M}\left[\begin{array}{c}\statevec u - \statevec v \\\statevec u +\statevec v\end{array}\right].
\end{equation}
Therefore a sufficient condition for energy boundedness is $\tilde {\mmatrix M} \ge0$, where
\begin{equation}
\tilde {\mmatrix M} = \frac{1}{4}\left[\begin{array}{cc}4(\ \mmatrix{$\Sigma$}_u+ \mmatrix{$\Sigma$}_v) & 2\beta A+2(\ \mmatrix{$\Sigma$}_u- \mmatrix{$\Sigma$}_v) \\2\beta A+2(\ \mmatrix{$\Sigma$}_u- \mmatrix{$\Sigma$}_v) & 0\end{array}\right].
\end{equation}
Non-negativity is guaranteed if 
\begin{subequations}
\begin{align}
\beta A + \mmatrix{$\Sigma$}_u =  \mmatrix{$\Sigma$}_v ,
\label{eq:1DSigmaMatrixConditionsW2}
\\
2 \mmatrix{$\Sigma$}_v - \beta A \ge 0.
\label{eq:1DSigmaMatrixConditionsW3}
\end{align}
\label{eq:1DSigmaMatrixConditionsW}
\end{subequations}
The condition \eqref{eq:1DSigmaMatrixConditionsW2} zeros the off-diagonal blocks of $\tilde{\mmatrix M}$, leaving only the upper left block in the rotated matrix. Adding condition \eqref{eq:1DSigmaMatrixConditionsW2} to \eqref{eq:1DSigmaMatrixConditionsW3} shows that the upper left block is non-negative. With those conditions, 
\begin{equation}
\mathcal P = (\statevec u - \statevec v)^T (\ \mmatrix{$\Sigma$}_u +  \mmatrix{$\Sigma$}_v) (\statevec u - \statevec v)\ge 0.
\end{equation}

\begin{ex}
The upwind projection on the penalty  with $\eta = \beta = \oneHalf$ terms leads to an energy-bounded problem. 
Let $\ \mmatrix{$\Sigma$}_u = \oneHalf \left| A^-\right|$. Then since $A = A^+ - \left| A^-\right|$, condition \eqref{eq:1DSigmaMatrixConditionsW2} requires that $ \mmatrix{$\Sigma$}_v = \oneHalf A^+$. Then 
\begin{equation}
\mathcal P =  \oneHalf (\statevec u - \statevec v)^T \left(A^+ + \left| A^-\right|\right) (\statevec u - \statevec v)\ge 0.
\end{equation}
For this scenario, the equations to be solved for $\statevec u$ and $\statevec v$ are
\begin{equation}
\begin{gathered}
\statevec u_t + \mmatrix A u_x + \mathcal L_u^b \left[ \oneHalf \left| A^-\right| (\statevec u-\statevec v)\right] + \mathcal L_u^c \left[\oneHalf \left| A^-\right|  (\statevec u-\statevec v)\right]= 0,\quad  x\in\Omega_u\hfill\\
\statevec v_t + \mmatrix A \statevec v_x + \mathcal L_v^b \left[  \oneHalf A^+ (\statevec v-\statevec u)\right]+ \mathcal L_v^c \left[ \oneHalf A^+ (\statevec v-\statevec u)\right]= 0, \quad x\in\Omega_v.\hfill\\
\end{gathered}
\end{equation}
\end{ex}

We can now prove energy boundedness in
\begin{thm}
For $\eta\in (0,1)$, and conditions \eqref{eq:1DSigmaMatrixConditionsW2}--\eqref{eq:1DSigmaMatrixConditionsW3}
on the coupling matrices, the overset domain problem \eqref{eq:LS-RSSystem-MPu}-\eqref{eq:LS-RSSystem-MPv} is energy bounded with
$E(T) \le \inorm{\statevecGreek\omega_0}_\Omega^2$, and further, $\inorm{\statevec u}_{\Omega_u}^2 \le \inorm{\statevecGreek\omega_0}_\Omega^2/(1-\eta)$ and $\inorm{\statevec v}_{\Omega_v}^2 \le \inorm{\statevecGreek\omega_0}_\Omega^2/\eta$.
\label{thm:EnergyBound1DSystemPenalty}
\end{thm}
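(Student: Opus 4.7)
The plan is to assemble the pieces already built in the preceding development and to unwind the definition of $E$ at the initial time. The hypotheses on the coupling matrices are precisely the ones engineered in \eqref{eq:1DSigmaMatrixConditionsW} to make both boundary-interface quadratic forms nonnegative, so the main work is just bookkeeping.

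First I would invoke the energy identity \eqref{eq:EnergyTimeDerivW}, which already reads
\begin{equation*}
\frac{d}{dt} E(\statevec u,\statevec v) + \mathcal P_b + \mathcal P_c \le 0.
\end{equation*}
The conditions \eqref{eq:1DSigmaMatrixConditionsW2}--\eqref{eq:1DSigmaMatrixConditionsW3} applied at $x=b$ (with $\beta=\eta$) and at $x=c$ (with $\beta=1-\eta$) render the rotated matrix $\tilde{\mmatrix M}$ block-triangular with a nonnegative upper-left block, yielding $\mathcal P_b, \mathcal P_c \ge 0$, as already shown just above the theorem. Therefore $\frac{d}{dt} E \le 0$, and integrating from $0$ to $T$ gives $E(T) \le E(0)$.

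Next I would evaluate $E(0)$. Since the initial conditions in \eqref{eq:LS-RSSystem-MPu}--\eqref{eq:LS-RSSystem-MPv} satisfy $\statevec u(x,0) = \statevec v(x,0) = \statevecGreek\omega_0(x)$, we have in particular $\inorm{\statevec u(0)}_{\Omega_O}^2 = \inorm{\statevec v(0)}_{\Omega_O}^2 = \inorm{\statevecGreek\omega_0}_{\Omega_O}^2$. Using the definition \eqref{eq:EnergyNorm} of $E$ and the convex combination $\eta + (1-\eta) = 1$,
\begin{equation*}
E(0) = \inorm{\statevecGreek\omega_0}_{\Omega_u}^2 + \inorm{\statevecGreek\omega_0}_{\Omega_v}^2 - \inorm{\statevecGreek\omega_0}_{\Omega_O}^2 = \inorm{\statevecGreek\omega_0}_\Omega^2,
\end{equation*}
where the last equality uses $\Omega_u \cup \Omega_v = \Omega$ with $\Omega_u \cap \Omega_v = \Omega_O$ (technique T2). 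Combining with $E(T) \le E(0)$ yields $E(T) \le \inorm{\statevecGreek\omega_0}_\Omega^2$.

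Finally, to extract the individual bounds I would appeal directly to Lemma \ref{lem:EBoundW}: the computation in \eqref{ew:newNormBoundDef1DW} established that a bound $E \le C$ implies $(1-\eta)\inorm{\statevec u}_{\Omega_u}^2 \le C$ and $\eta\inorm{\statevec v}_{\Omega_v}^2 \le C$, because the remaining terms in that decomposition are nonnegative. Setting $C = \inorm{\statevecGreek\omega_0}_\Omega^2$ gives the stated bounds $\inorm{\statevec u}_{\Omega_u}^2 \le \inorm{\statevecGreek\omega_0}_\Omega^2/(1-\eta)$ and $\inorm{\statevec v}_{\Omega_v}^2 \le \inorm{\statevecGreek\omega_0}_\Omega^2/\eta$. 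There is no real obstacle here; the genuine work was done in establishing \eqref{eq:EnergyTimeDerivW}, in designing the composite norm $E$ of Lemma \ref{lem:EBoundW}, and in deriving the matrix conditions \eqref{eq:1DSigmaMatrixConditionsW}. The only subtle point to be careful with is verifying that $E(0)$ collapses to the single-domain norm rather than double-counting the overlap, which is exactly what the subtraction of the weighted overlap contributions in the definition of $E$ is designed to ensure.
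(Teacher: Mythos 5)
Your proposal is correct and follows the paper's own argument essentially verbatim: nonnegativity of $\mathcal P_b$ and $\mathcal P_c$ from the coupling conditions yields $\frac{d}{dt}E\le 0$, integration in time together with the observation that $E(0)$ collapses to $\inorm{\statevecGreek\omega_0}_\Omega^2$ gives the bound on $E(T)$, and Lemma \ref{lem:EBoundW} extracts the individual bounds on $\inorm{\statevec u}_{\Omega_u}^2$ and $\inorm{\statevec v}_{\Omega_v}^2$. No gaps; this is the paper's proof.
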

\begin{proof}
Defined in \eqref{eq:Pin1D}, the quantities $\mathcal P_b$ and $\mathcal P_c$ are non-negative under the coupling matrix conditions \eqref{eq:1DSigmaMatrixConditionsW2}--\eqref{eq:1DSigmaMatrixConditionsW3}. Then
\begin{equation}
\frac{d}{dt} E = \frac{d}{dt}\inorm{\statevec u}_{\Omega_u}^2 + \frac{d}{dt}\inorm{\statevec v}_{\Omega_v}^2  - \left\{ \eta\frac{d}{dt}\inorm{\statevec u}_{\Omega_{O}}^2 +
(1-\eta) \frac{d}{dt}\inorm{\statevec v}_{\Omega_{O}}^2 \right\}
\le0.
\end{equation}
Integrating in time, and using \eqref{eq:NewNorms1DW},
\begin{equation}
\begin{split}
E(T) =\inorm{\statevec u(T)}_{\Omega_u}^2 + &\inorm{\statevec v(T)}_{\Omega_v}^2  - \left\{ \eta \inorm{\statevec u(T)}_{\Omega_{O}}^2 +
(1-\eta)\inorm{\statevec v(T)}_{\Omega_{O}}^2 \right\}
\\&
\le \inorm{\statevecGreek \omega_0}_{\Omega_u}^2 +\inorm{\statevecGreek \omega_0}_{\Omega_v}^2  - \left\{ \eta\inorm{\statevecGreek \omega_0}_{\Omega_{O}}^2 +(1-\eta)
\inorm{\statevecGreek \omega_0}_{\Omega_{O}}^2 \right\}
\\&= \inorm{\statevecGreek \omega_0}^2_{\Omega} \ge 0.
\end{split}
\end{equation}
The last line follows because $\eta\inorm{\statevecGreek \omega_0}_{\Omega_{O}}^2 +(1-\eta)
\inorm{\statevecGreek \omega_0}_{\Omega_{O}}^2= \inorm{\statevecGreek \omega_0}_{\Omega_{O}}^2$, and $\inorm{\statevecGreek \omega_0}_{\Omega_u}^2 +\inorm{\statevecGreek \omega_0}_{\Omega_v}^2$ counts the overlap region twice.
Finally, by Lemma. \ref{lem:EBoundW}, $\inorm{\statevec u(T)}_{\Omega_u}^2 \le  \inorm{\omega_0}_\Omega^2/(1-\eta)$, $\inorm{\statevec v(T)}_{\Omega_v}^2\le  \inorm{\omega_0}_\Omega^2/\eta$ and the solutions at any time $T$ are bounded by the initial data.
\end{proof}
\subsubsection{Equivalence to the Original Problem and Well-Posedness}

The solutions to the overset problem \eqref{eq:LS-RSSystem-MPu}--\eqref{eq:LS-RSSystem-MPv}
are equivalent to the solution to the original problem, \eqref{eq:SystemForOmega1D}, from which it follows that the overset problem is well-posed. 
\begin{thm}
\label{thm:1DEquivalenceWithPenalty}
For $\eta\in (0,1)$, and conditions \eqref{eq:1DSigmaMatrixConditionsW2}--\eqref{eq:1DSigmaMatrixConditionsW3}
on the coupling matrices, the solutions of the overset domain problem \eqref{eq:LS-RSSystem-MPu}-\eqref{eq:LS-RSSystem-MPu} with interior penalties are equivalent to the solution for the full domain problem, i.e. $\statevec u = \statevecGreek\omega$ for $x\in\Omega_u$ and $\statevec v = \statevecGreek\omega$ for $x\in\Omega_v$. Under the assumption that the original problem is well-posed, the overset domain problem is also well-posed.
\end{thm}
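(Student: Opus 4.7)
The plan is to mirror the perturbation argument used in Thm.~\ref{thm:WellPosednessScalar1D} and Thm.~\ref{thm:1DSystemWellPosedness}: form the differences between the overset and full domain solutions, verify that they satisfy the same penalized overset problem but with trivial data, and then invoke the energy bound of Thm.~\ref{thm:EnergyBound1DSystemPenalty} to force the differences to vanish.

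First, I would define $\statevec u' = \statevec u - \statevecGreek\omega$ on $\Omega_u$ and $\statevec v' = \statevec v - \statevecGreek\omega$ on $\Omega_v$, and subtract the full domain system \eqref{eq:SystemForOmega1D} from \eqref{eq:LS-RSSystem-MPu} and \eqref{eq:LS-RSSystem-MPv}. The key observation, which is what distinguishes this proof from the purely hyperbolic case, is that the penalty arguments transform cleanly: since $\statevec u - \statevec v = \statevec u' - \statevec v'$, the penalty terms retain their structure when written in terms of the differences. The linearity of the physical boundary operators $BL_1, BR_2$ gives $\statevec u'(a,t) = BL_1(\statevec u'(a,t))$ and $\statevec v'(d,t) = BR_2(\statevec v'(d,t))$, and since both $\statevec u$ and $\statevec v$ start from $\statevecGreek\omega_0$, the initial data reduce to $\statevec u'(x,0) = 0$ and $\statevec v'(x,0) = 0$.

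Consequently, $(\statevec u', \statevec v')$ solves the same overset penalty problem as $(\statevec u, \statevec v)$ but with zero initial data, and with the coupling matrices still satisfying \eqref{eq:1DSigmaMatrixConditionsW2}--\eqref{eq:1DSigmaMatrixConditionsW3}. Applying Thm.~\ref{thm:EnergyBound1DSystemPenalty} to the difference problem yields $\inorm{\statevec u'(T)}_{\Omega_u}^2 \le \inorm{0}_\Omega^2/(1-\eta) = 0$ and $\inorm{\statevec v'(T)}_{\Omega_v}^2 \le \inorm{0}_\Omega^2/\eta = 0$, so $\statevec u \equiv \statevecGreek\omega$ on $\Omega_u$ and $\statevec v \equiv \statevecGreek\omega$ on $\Omega_v$. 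Well-posedness of the overset problem then follows from that of the original problem: existence is furnished by the restrictions of $\statevecGreek\omega$ to $\Omega_u$ and $\Omega_v$, uniqueness follows by applying the energy bound to the difference of any two hypothetical overset solutions, and continuous dependence on the initial data is exactly the content of Thm.~\ref{thm:EnergyBound1DSystemPenalty}.

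The only subtle point to verify carefully is that $\statevecGreek\omega$, which is defined on the single domain $\Omega$, provides consistent boundary values to the penalty terms at both $x=b$ and $x=c$. This is immediate because $\statevecGreek\omega$ is single-valued on $\Omega$ and therefore produces identical contributions in the penalty differences on either subdomain, which is precisely what makes the cancellation $\statevec u - \statevec v = \statevec u' - \statevec v'$ work inside the lifting operators.
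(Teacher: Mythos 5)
Your proposal is correct and follows essentially the same route as the paper's own proof: subtract the full-domain solution to form $\statevec u'$ and $\statevec v'$, use linearity and the identity $\statevec u - \statevec v = \statevec u' - \statevec v'$ to show the differences satisfy the same penalized overset problem with zero initial data, and then apply Thm.~\ref{thm:EnergyBound1DSystemPenalty} to conclude the differences vanish. The extra remarks on single-valuedness of $\statevecGreek\omega$ and on the components of well-posedness are consistent elaborations rather than a different argument.
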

\begin{proof}
Using the facts that the boundary conditions are linear, that the solution $\statevecGreek\omega$ satisfies the same PDE system in each subdomain as the subdomain solutions, and that $\statevec u - \statevec v = (\statevec u -\statevecGreek\omega) - (\statevec v - \statevecGreek\omega) = \statevec u' - \statevec v'$, the differences $\statevec u' = \statevec u -\statevecGreek\omega$ and  $\statevec v' = \statevec v -\statevecGreek\omega$ satisfy \eqref{eq:LS-RSSystem-MPu} and \eqref{eq:LS-RSSystem-MPv}, respectively, with trivial initial conditions, $\statevec u'(x,0) = \statevecGreek\omega'_0 \equiv 0$ and  $\statevec v'(x,0) = \statevecGreek\omega'_0 \equiv 0$. Then by Thm. \ref{thm:EnergyBound1DSystemPenalty}, for any time $t=T$,
\begin{equation}
\begin{gathered}
    \inorm{\statevec u'(T)}_{\Omega_u}^2 \le  \inorm{\statevecGreek\omega'_0}_\Omega^2/(1-\eta)=0\hfill\\
    \inorm{\statevec v'(T)}_{\Omega_v}^2 \le  \inorm{\statevecGreek\omega'_0}_\Omega^2/\eta=0,\hfill
\end{gathered}
\end{equation}
from which the equivalence follows. Since the original problem for $\statevecGreek\omega$ is well posed, it follows that the overset problem is also well-posed.
\end{proof}

\subsubsection{Conservation}
The overset problem \eqref{eq:LS-RSSystem-MPu}--\eqref{eq:LS-RSSystem-MPv} with the interior penalties is also conservative, and the flux lost from one domain at the interface is equal to that gained by the other, if the overset domain problem is energy bounded. 
To show conservation, we set $\statevecGreek\phi_u = \statevecGreek\phi_v = \statevecGreek\phi$ and combine \eqref{eq:WeakFormsUV} and \eqref{eq:WeakFormsOverlap} as in \eqref{eq:EnergyTimeDerivW},
\begin{equation}
\begin{gathered}
 \iprod{\statevecGreek\phi,\statevec u_t}_{\Omega_u} 
 + \left.\statevecGreek\phi^T\mmatrix A \statevec u \right|_a^c 
 - \iprod{\statevecGreek\phi',\mmatrix A \statevec u }_{\Omega_u}
 +\left.\statevecGreek\phi^T\ \mmatrix{$\Sigma$}_u^b (\statevec u-\statevec v)\right|_b+\left.\statevecGreek\phi^T\ \mmatrix{$\Sigma$}_u^c (\statevec u-\statevec v)\right|_c\hfill\\
+ \iprod{\statevecGreek\phi,\statevec v_t}_{\Omega_v} +  \left.\statevecGreek\phi^T\mmatrix A \statevec v \right|_b^d 
- \iprod{\statevecGreek\phi',\mmatrix A \statevec v}_{\Omega_v} + \left.\statevecGreek\phi^T \mmatrix{$\Sigma$}_v^b (\statevec v-\statevec u)\right|_b+  \left.\statevecGreek\phi^T \mmatrix{$\Sigma$}_v^c (\statevec v-\statevec u)\right|_c\hfill\\
-\eta\left\{ \iprod{\statevecGreek\phi,\statevec u_t}_{\Omega_O} + \left.\statevecGreek\phi^T\mmatrix A \statevec u \right|_b^c - \iprod{\statevecGreek\phi',\mmatrix A \statevec u}_{\Omega_O}\right\}\hfill\\
-(1-\eta)\left\{\iprod{\statevecGreek\phi,\statevec v_t}_{\Omega_O} + \left.\statevecGreek\phi^T\mmatrix A \statevec v \right|_b^c - \iprod{\statevecGreek\phi',\mmatrix A \statevec v}_{\Omega_O}\right\} = 0.\hfill\\
\end{gathered}
\end{equation}
Gathering terms, 
\begin{equation}
\begin{split}
&\iprod{\statevecGreek\phi,\statevec u_t}_{\Omega_u} + \iprod{\statevecGreek\phi,\statevec v_t}_{\Omega_v} - \left\{  \eta\iprod{\statevecGreek\phi,\statevec u_t}_{\Omega_O} + (1-\eta)\iprod{\statevecGreek\phi,\statevec v_t}_{\Omega_O} \right\}+ \statevecGreek\phi^T\left(\left.\mmatrix A\statevec v\right|_d - \left.\mmatrix A\statevec u\right|_a\right)
\\&- 
\left\{ \iprod{\statevecGreek\phi',\mmatrix A \statevec u }_{\Omega_u} + \iprod{\statevecGreek\phi',\mmatrix A \statevec v}_{\Omega_v} -
\eta \iprod{\statevecGreek\phi',\mmatrix A \statevec u}_{\Omega_O}- (1-\eta)  \iprod{\statevecGreek\phi',\mmatrix A \statevec v}_{\Omega_O}  \right\}
\\& +
\left.\statevecGreek\phi^T\left\{ \eta \mmatrix A + \mmatrix{$\Sigma$}^b_L - \mmatrix{$\Sigma$}^b_R\right\}(\statevec u - \statevec v)\right|_b
\\&
+\left.\statevecGreek\phi^T \left\{ (1-\eta) \mmatrix A + \mmatrix{$\Sigma$}^c_L - \mmatrix{$\Sigma$}^c_R\right\}(\statevec u - \statevec v)\right|_c.
\end{split}
\label{eq:General1DConservationFormula}
\end{equation}
Making the interior boundary term vanish, we have the following theorem:
\begin{thm}
The overset domain problem \eqref{eq:LS-RSSystem-MPu} is conservative if the interface terms at $x=b$ and $x=c$ in \eqref{eq:General1DConservationFormula} vanish, that is, if
\begin{equation}
 \beta \mmatrix A + \mmatrix{$\Sigma$}_u =  \mmatrix{$\Sigma$}_v,
\label{eq:ConservationCondition}
\end{equation}
where, as before, $\beta = \eta$ at $x=b$ and $\beta = 1-\eta$ at $x=c$.
\end{thm}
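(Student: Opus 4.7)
The plan is to specialize the weak identity \eqref{eq:General1DConservationFormula} to a constant test function, which should isolate a global balance law from the weak formulation that has already been derived. I would first set $\statevecGreek\phi\equiv\statevecGreek\phi_0$ for an arbitrary constant vector; then $\statevecGreek\phi'\equiv 0$, and all four inner products involving $\statevecGreek\phi'$ on the second line of \eqref{eq:General1DConservationFormula} drop out identically. The remaining expression can then be organized in the form
\begin{equation}
\statevecGreek\phi_0^T\frac{d}{dt}\mathcal{Q}(t) + \statevecGreek\phi_0^T\bigl(\mmatrix A\statevec v|_d - \mmatrix A\statevec u|_a\bigr) + \statevecGreek\phi_0^T\mathcal{I}_b + \statevecGreek\phi_0^T\mathcal{I}_c = 0,
\end{equation}
where $\mathcal{Q}(t)$ is the overlap-corrected global quantity coming directly from the signs on the first line of \eqref{eq:General1DConservationFormula}, and $\mathcal{I}_b,\mathcal{I}_c$ are the two interior interface residuals on the last two lines of \eqref{eq:General1DConservationFormula}.

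The next step is to argue that genuine conservation requires $d\mathcal{Q}/dt$ to be determined solely by the physical boundary flux at $x=a$ and $x=d$, so the interior interface residuals must vanish. Since $\statevecGreek\phi_0$ is an arbitrary constant vector, and the jump $\statevec u - \statevec v$ at the interior interface points is not pinned to zero by the weak penalty coupling (it is generically nonzero for finite penalty strengths), the coefficient matrix acting on $(\statevec u-\statevec v)$ at each of $x=b$ and $x=c$ must be the zero matrix. Reading these coefficients directly from the last two lines of \eqref{eq:General1DConservationFormula} gives
\begin{equation}
\eta\mmatrix A + \mmatrix{$\Sigma$}_u^b - \mmatrix{$\Sigma$}_v^b = 0, \qquad (1-\eta)\mmatrix A + \mmatrix{$\Sigma$}_u^c - \mmatrix{$\Sigma$}_v^c = 0,
\end{equation}
which is exactly \eqref{eq:ConservationCondition} under the convention $\beta=\eta$ at $b$ and $\beta=1-\eta$ at $c$. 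Under these conditions the balance becomes $\statevecGreek\phi_0^T d\mathcal{Q}/dt = \statevecGreek\phi_0^T(\mmatrix A\statevec u|_a - \mmatrix A\statevec v|_d)$, so the flux lost by one subdomain at each overlap interface is exactly gained by the other.

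I expect no substantive obstacle here, since the derivation of \eqref{eq:General1DConservationFormula} has already done the heavy algebra and the theorem amounts to reading off coefficients. The one conceptual point worth flagging is that the overlap-weighted quantity $\mathcal{Q}$ emerging naturally from the combination built into \eqref{eq:General1DConservationFormula} is precisely the combination that removes the double-counting of the overlap region which technique (T2) was designed to correct; condition \eqref{eq:ConservationCondition} is therefore the natural one in this geometry. It is also worth observing, as a consistency check, that \eqref{eq:ConservationCondition} coincides exactly with the energy-boundedness condition \eqref{eq:1DSigmaMatrixConditionsW2} of Thm.~\ref{thm:EnergyBound1DSystemPenalty}, so any choice of coupling matrices that yields an energy-bounded overset problem is automatically conservative.
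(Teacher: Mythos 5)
Your proposal is correct and follows essentially the same route as the paper: the condition \eqref{eq:ConservationCondition} is read off directly from the coefficients of $(\statevec u - \statevec v)$ at $x=b$ and $x=c$ in \eqref{eq:General1DConservationFormula}, and your constant test function step and the identification with the energy-boundedness condition \eqref{eq:1DSigmaMatrixConditionsW2} both mirror what the paper does (the latter in the remark following the theorem, the former in Thm.~\ref{thm:1DPenaltyConservationMatch}). The only small difference is that you also argue necessity of the condition, which goes slightly beyond the stated ``if'' direction but does no harm.
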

With condition \eqref{eq:ConservationCondition}, the flux lost from one penalty on one domain is gained at the same point in the other domain, leading to overall conservation. This conservation condition is the first of the energy boundedness conditions, \eqref{eq:1DSigmaMatrixConditionsW2}.

Most importantly, the meaning of conservation in the overset context can be interpreted in terms of the original problem over $\Omega$: 
\begin{thm}
Conservation in the overset domain problem \eqref{eq:LS-RSSystem-MPu}-\eqref{eq:LS-RSSystem-MPu} with the condition \eqref{eq:ConservationCondition} is equivalent to
\begin{equation}
    \frac{d}{dt}\int_a^d\statevecGreek\omega dx  = \statevec f_a-\statevec f_d,
\end{equation}
where $\statevec f_a = \mmatrix A\statevec u|_a$ and $\statevec f_d = \mmatrix A\statevec v|_d$ are the boundary fluxes at $x=a$ and $x=d$, repsectively.
\label{thm:1DPenaltyConservationMatch}
\end{thm}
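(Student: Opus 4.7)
The plan is to specialize the general identity \eqref{eq:General1DConservationFormula} to a constant test function and then translate back to the original problem using the equivalence result, Thm. \ref{thm:1DEquivalenceWithPenalty}. Since \eqref{eq:General1DConservationFormula} is obtained by adding the weak forms \eqref{eq:WeakFormsUV} and \eqref{eq:WeakFormsOverlap} (each of which equals zero), the full expression equals zero for every admissible $\statevecGreek\phi$.

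First I would choose $\statevecGreek\phi$ to be an arbitrary constant vector. Then $\statevecGreek\phi' \equiv 0$ makes the entire second line of \eqref{eq:General1DConservationFormula} drop out. Next I would apply the conservation condition \eqref{eq:ConservationCondition} with $\beta = \eta$ at $x = b$ and $\beta = 1 - \eta$ at $x = c$: this forces the matrix coefficients $\eta\mmatrix A + \mmatrix{$\Sigma$}^b_u - \mmatrix{$\Sigma$}^b_v$ and $(1-\eta)\mmatrix A + \mmatrix{$\Sigma$}^c_u - \mmatrix{$\Sigma$}^c_v$ to vanish, killing both interface contributions. Since $\statevecGreek\phi$ is an arbitrary constant, the remaining identity reads
\begin{equation}
\frac{d}{dt}\left\{\int_{\Omega_u}\statevec u\,dx + \int_{\Omega_v}\statevec v\,dx - \eta\int_{\Omega_O}\statevec u\,dx - (1-\eta)\int_{\Omega_O}\statevec v\,dx\right\} = \statevec f_a - \statevec f_d.
\end{equation}

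Finally I would invoke Thm. \ref{thm:1DEquivalenceWithPenalty}, which gives $\statevec u = \statevecGreek\omega$ on $\Omega_u$ and $\statevec v = \statevecGreek\omega$ on $\Omega_v$. Using the decompositions $\Omega_u = \Omega_{\bar u}\cup\Omega_O$ and $\Omega_v = \Omega_O\cup\Omega_{\bar v}$ together with the disjoint decomposition $\Omega = \Omega_{\bar u}\cup\Omega_O\cup\Omega_{\bar v}$, the sum of the first two integrals equals $\int_a^d\statevecGreek\omega\,dx + \int_{\Omega_O}\statevecGreek\omega\,dx$, where the second summand is the double-counted overlap. The weighted subtraction evaluates to $[\eta + (1-\eta)]\int_{\Omega_O}\statevecGreek\omega\,dx = \int_{\Omega_O}\statevecGreek\omega\,dx$, which cancels the double count exactly, yielding $\tfrac{d}{dt}\int_a^d\statevecGreek\omega\,dx = \statevec f_a - \statevec f_d$.

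The argument is essentially bookkeeping once the conservation condition and the equivalence theorem are available; no new estimate is required. The one conceptual point — and the reason the statement works for every $\eta\in(0,1)$ — is technique (T2): the convex combination with weights $\eta,\,1-\eta$ in the overlap correction was designed precisely so that the double-counted $\int_{\Omega_O}\statevecGreek\omega\,dx$ is removed identically. Thus the choice of $\eta$ is irrelevant for the conservation statement, mirroring the fact that it was also irrelevant for well-posedness.
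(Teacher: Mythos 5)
Your proposal is correct and follows essentially the same route as the paper: set $\statevecGreek\phi$ to a constant in \eqref{eq:General1DConservationFormula}, use \eqref{eq:ConservationCondition} to annihilate the interface terms, and invoke Thm.~\ref{thm:1DEquivalenceWithPenalty} so that the weighted overlap subtraction exactly cancels the double-counted $\int_{\Omega_O}\statevecGreek\omega\,dx$ and the integrals collapse to $\int_a^d\statevecGreek\omega\,dx$. Your explicit $\eta+(1-\eta)=1$ bookkeeping is just a more detailed writing of the paper's one-line argument.
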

\begin{proof}
Summing the integrals over $\Omega_u$ and $\Omega_v$ in \eqref{eq:General1DConservationFormula} counts the overlap region twice, so the extra contribution is removed. With the conservation condition \eqref{eq:ConservationCondition} and $\phi = 1$ in \eqref{eq:General1DConservationFormula},
\begin{equation}
\frac{d}{dt}\left\{\int_a^c\statevec u dx + \int_b^d\statevec v dx - \left( \eta \int_b^c\statevec udx +  (1-\eta)\int_b^c\statevec vdx\right)\right\}= \left(\mmatrix A\statevec u|_a - \mmatrix A\statevec v|_d\right) = \statevec f_a-\statevec f_d.
\label{eq:Conservation1D}
\end{equation}
By Thm. \ref{thm:1DEquivalenceWithPenalty}, $\statevec u = \statevecGreek \omega$ and $\statevec v = \statevecGreek \omega$ on their respective domains, the integrals collapse to the single integral over $[a,d]$, and the result follows.
\end{proof}

\section{Well-Posed Overset Domain Problems in Two Space Dimensions}

 The procedure of adding penalty terms at the interfaces to get a well-posed overset domain problem extends to more than one space dimension. In this section we consider two types of overset problems: one with coupling only at the domain interfaces, like that done in Sec. \ref{sec:WeightedMultiplePenalty},  and another that also adds coupling within the interior of the overlap.

We define the full domain problem on the domain $\Omega$ shown in Fig. \ref{fig:2DGemetry} as
\begin{equation}
\left\{\begin{gathered}
\statevecGreek \omega_t + \spacevec{\mmatrix A}\cdot\nabla\statevecGreek \omega = 0,\quad \spacevec x\in\Omega\hfill\\
\statevecGreek \omega(\spacevec x,t) = B^a\left( \statevecGreek \omega(\spacevec x,t)\right)\quad \spacevec x \in \Gamma_a\hfill\\
\statevecGreek \omega(\spacevec x,t) = B^d\left( \statevecGreek \omega(\spacevec x,t)\right)\quad \spacevec x \in \Gamma_d\hfill\\
\statevecGreek \omega(\spacevec x,0) = \statevecGreek \omega_0(\spacevec x),\quad \spacevec x\in\Omega, \hfill
\end{gathered} \right.
\label{eq:Omega2DProblem}
\end{equation}
where
$\spacevec{\mmatrix A }= \mmatrix A_1\hat x + \mmatrix A_2\hat y$. As in one space dimension, we require that the physical boundary operators $B^a$ and $B^d$ be characteristic, dissipative, and impose trivial conditions on the incoming characteristic variables.

\begin{figure}[tbp] 
   \centering
   \includegraphics[width=2.5in]{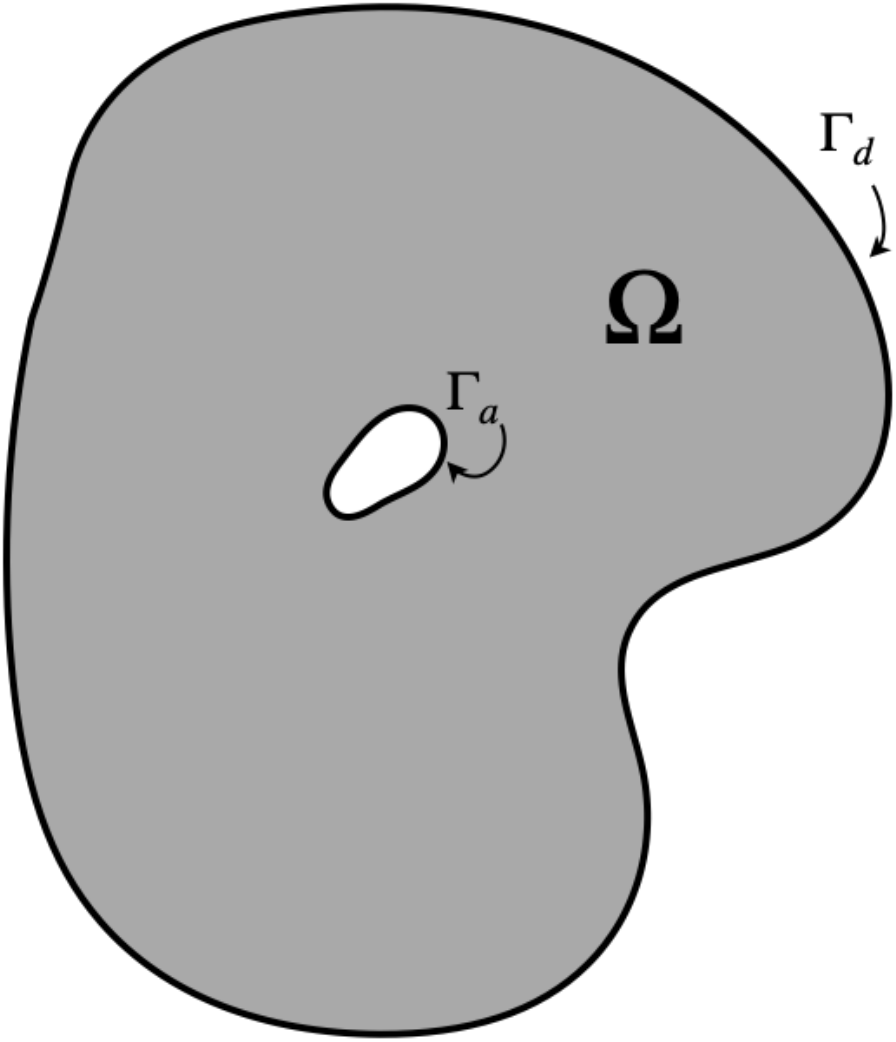} 
   \caption{Diagram of the 2D geometry}
   \label{fig:2DGemetry}
\end{figure}

The overset geometry that covers the domain $\Omega$ in Fig. \ref{fig:2DGemetry} is shown in Fig. \ref{fig:2DOversetGemetry}. The base domain, $\Omega_v$, has a hole bounded by the curve $\Gamma_b$. The overset domain extends beyond that hole to a boundary bounded by the curve, $\Gamma_c$. In this way, the physical and interface boundaries are ordered as they were in one space dimension.
\begin{figure}[htbp] 
   \centering
   \includegraphics[width=5.5in]{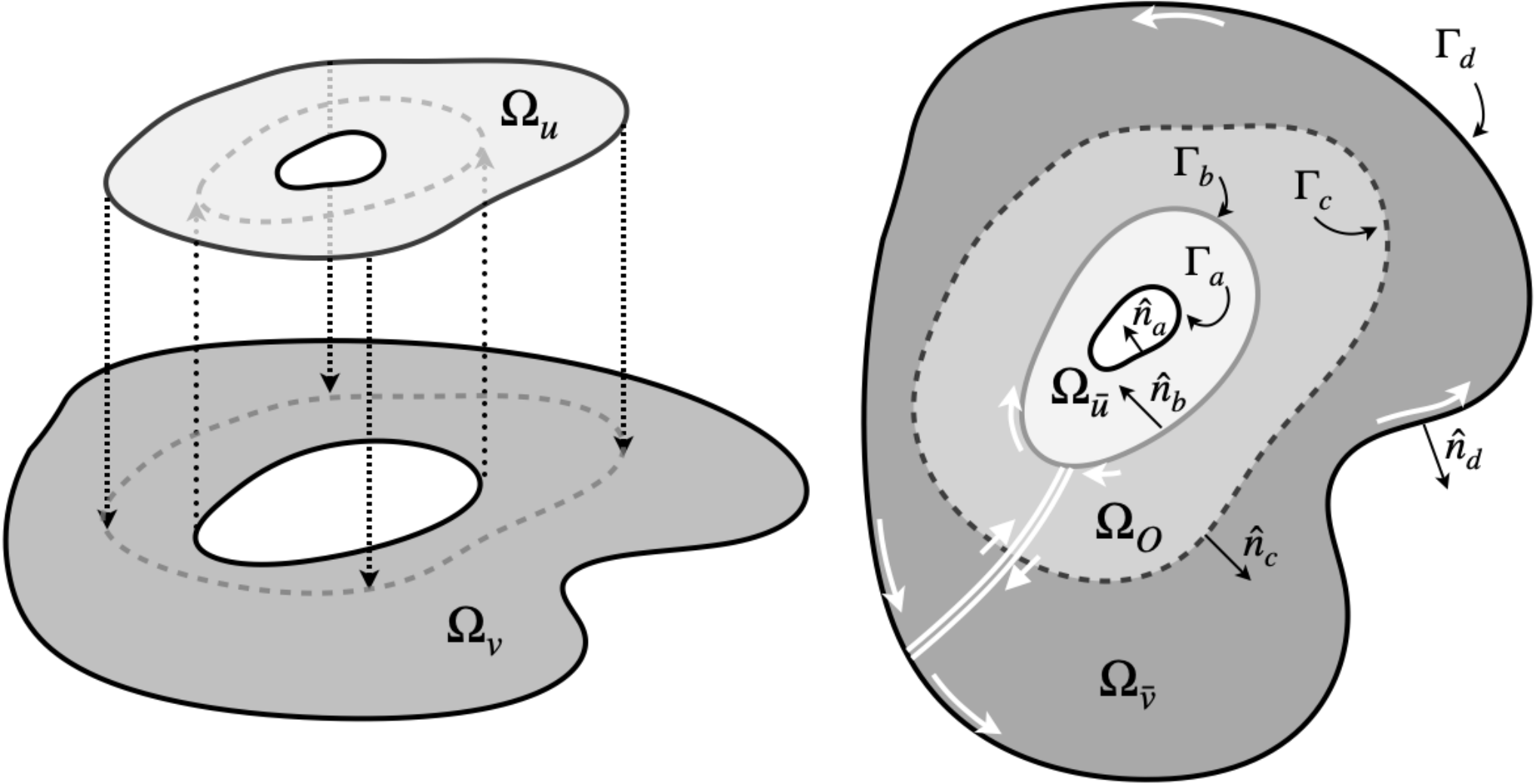} 
   \caption{Diagrams of the overset geometry in 2D}
   \label{fig:2DOversetGemetry}
\end{figure}

\subsection{Boundary only Coupling}
The two-dimensional versions of \eqref{eq:LS-RSSystem-MPu} and \eqref{eq:LS-RSSystem-MPv} on the domains $\Omega_u$ and $\Omega_v$ are the overset system problem, $O$,
\begin{equation}
O\;\left\{\begin{gathered}
\statevec u_t + \spacevec{\mmatrix A}\cdot\nabla\statevec u  + \mathcal L_u^b \left[ \mmatrix{$\Sigma$}_u^b (\statevec u-\statevec v)\right] 
+ \mathcal L_u^c \left[ \mmatrix{$\Sigma$}_u^c (\statevec u-\statevec v)\right]= 0, \quad\spacevec x\in\Omega_u\hfill\\
\statevec u(\spacevec x,t) = B^a\left( \statevec u(\spacevec x,t)\right)\quad \spacevec x \in \Gamma_a\hfill\\
\statevec u(\spacevec x,0) = \statevecGreek \omega_0(\spacevec x), \quad\spacevec x\in\Omega_u \hfill
\end{gathered} \right.
\label{eq:OmegaU-RSSystem-MPu}
\end{equation}
and the base system problem, $B$,
\begin{equation}
B\;\left\{\begin{gathered}
\statevec v_t + \spacevec{\mmatrix A}\cdot\nabla\statevec v + \mathcal L_v^b \left[ \mmatrix{$\Sigma$}_v^b (\statevec v-\statevec u)\right]
+ \mathcal L_v^c \left[ \mmatrix{$\Sigma$}_v^c (\statevec v-\statevec u)\right]= 0, \quad\spacevec x\in\Omega_v\hfill\\
\statevec v(\spacevec x,t) = B^d\left( \statevec v(\spacevec x,t)\right)\quad \spacevec x \in \Gamma_d\hfill\\
\statevec v(\spacevec x,0) = \statevecGreek \omega_0(\spacevec x ),\quad \spacevec x\in\Omega_v .\hfill
\end{gathered} \right.
\label{eq:OmegaV-RSSystem-MPv}
\end{equation}
 
The functions $\mathcal L_{u}^b$, etc. are lifting operators that return the values along a curve. Specifically, over a domain $V$ containing the curve $\Gamma$,
\begin{equation}
\int_V \statevecGreek \psi^T\mathcal L[\statevec f]dV = \int_\Gamma \statevecGreek \psi^T(\spacevec x(s))\statevec f(\spacevec x(s))ds,
\end{equation}
where $\statevecGreek \psi$ is again some test function.

As in one space dimension, we create weak forms over $\Omega_u$ and $\Omega_v$,
\begin{equation}
\iprod{\statevecGreek\phi_u,\statevec u_t}_{\Omega_u} 
+ \iprod{\statevecGreek\phi_u, \spacevec{\mmatrix A}\cdot\nabla\statevec u}_{\Omega_u}   
+  \oint_{\Gamma_b} \statevecGreek\phi^T_u\mmatrix{$\Sigma$}_u^b (\statevec u-\statevec v)ds 
+ \oint_{\Gamma_c}\statevecGreek\phi^T_u\mmatrix{$\Sigma$}_u^c (\statevec u-\statevec v)ds= 0,
\label{eq:OmegaU-RSSystem-MPuW}
\end{equation}
\begin{equation}
\iprod{\statevecGreek\phi_v,\statevec v_t}_{\Omega_v} 
+ \iprod{\statevecGreek\phi_v, \spacevec{\mmatrix A}\cdot\nabla\statevec v }_{\Omega_v}
+ \oint_{\Gamma_b} \statevecGreek\phi^T_v\mmatrix{$\Sigma$}_v^b (\statevec v-\statevec u)ds
+ \oint_{\Gamma_c} \statevecGreek\phi^T_v\ \mmatrix{$\Sigma$}_v^c (\statevec v-\statevec u)ds= 0,
\label{eq:OmegaV-RSSystem-MPvW}
\end{equation}
where for some $\statevecGreek \phi$ and $\statevec q$,
\begin{equation}
\iprod{\statevecGreek \phi,\statevec q}_\Omega = \int_\Omega \statevecGreek \phi^T\statevec q dxdy.
\end{equation}
Similarly, in the overlap region, $\Omega_O$,
\begin{equation}
\begin{gathered}
\iprod{\statevecGreek\phi_u,\statevec u_t}_{\Omega_O} + \iprod{\statevecGreek\phi_u, \spacevec{\mmatrix A}\cdot\nabla\statevec u}_{\Omega_O}  = 0,
\hfill\\
\iprod{\statevecGreek\phi_v,\statevec v_t}_{\Omega_O} + \iprod{\statevecGreek\phi_v, \spacevec{\mmatrix A}\cdot\nabla\statevec v }_{\Omega_O} = 0.
\hfill
\end{gathered}
\label{eq:2DWeakFormOveralap}
\end{equation}

\subsubsection{The Energy Bound}

To show energy boundedness of the problems \eqref{eq:OmegaU-RSSystem-MPu}--\eqref{eq:OmegaV-RSSystem-MPv}, we choose $\statevecGreek \phi_u = \statevec u$ and $\statevecGreek \phi_v = \statevec v$. Multidimensional integration by parts applied to the divergence terms and the fact that the coefficient matrices are symmetric and constant says that 
\begin{equation}
\iprod{\statevec u,\spacevec{\mmatrix A}\cdot\nabla\statevec u}_{\Omega_u} = 
\oneHalf \oint_{\Gamma_a} \statevec u^T{\spacevec{\mmatrix A}\cdot\hat n_a}\statevec u ds
+ \oneHalf \oint_{\Gamma_c} \statevec u^T{\spacevec{\mmatrix A}\cdot\hat n_c}\statevec u ds,
\label{eq:u2DIntByParts}
\end{equation}
where the integrals along cuts sketched in Fig. \ref{fig:2DOversetGemetry} cancel. 
Similarly,
\begin{equation}
\iprod{\statevec v,\spacevec{\mmatrix A}\cdot\nabla\statevec v}_{\Omega_v} = \oneHalf \int_{\Gamma_b} \statevec v^T\spacevec{\mmatrix A}\cdot\hat n_b\statevec v ds + \oneHalf \int_{\Gamma_d} \statevec v^T{\spacevec{\mmatrix A}\cdot\hat n_d}\statevec v ds.
\label{eq:v2DIntByParts}
\end{equation}

After substituting \eqref{eq:u2DIntByParts} and \eqref{eq:v2DIntByParts} into \eqref{eq:OmegaU-RSSystem-MPuW} and \eqref{eq:OmegaV-RSSystem-MPvW}, gathering like integrals,  and multiplying by two,
\begin{equation}
 \frac{d}{dt} \inorm{\statevec u}^2_{\Omega_u} 
 +  \oint_{\Gamma_a} \statevec u^T{\spacevec{\mmatrix A}\cdot\hat n_a}\statevec u ds
+  \oint_{\Gamma_b}2\statevec u^T\mmatrix{$\Sigma$}_u^b (\statevec u-\statevec v)ds
+ \oint_{\Gamma_c} \left\{\statevec u^T\spacevec{\mmatrix A}\cdot\hat n_c\statevec u  + 2\statevec u^T\mmatrix{$\Sigma$}_u^c (\statevec u-\statevec v)\right\}ds   
= 0
\label{eq:U2DNorm}
\end{equation}
and
\begin{equation}
 \frac{d}{dt} \inorm{\statevec v}^2_{\Omega_v}  
+ \oint_{\Gamma_b}\left\{\statevec v^T\spacevec{\mmatrix A}\cdot\hat n_b\statevec v +  2\statevec v^T \mmatrix{$\Sigma$}_v^b (\statevec v-\statevec u)\right\}ds
+ \oint_{\Gamma_c} 2\statevec v^T\mmatrix{$\Sigma$}_v^c (\statevec v-\statevec u)ds
+  \oint_{\Gamma_d} \statevec v^T\spacevec{\mmatrix A}\cdot\hat n_d\statevec v ds
= 0,
\end{equation}
whereas the overlap region inner products become
\begin{equation}
\begin{gathered}
 \frac{d}{dt} \inorm{\statevec u}^2_{\Omega_O} 
+  \oint_{\Gamma_b} \statevec u^T{\spacevec{\mmatrix A}\cdot\hat n_b}\statevec u ds 
+  \oint_{\Gamma_c} \statevec u^T{\spacevec{\mmatrix A}\cdot\hat n_c}\statevec u ds  = 0
\hfill\\
 \frac{d}{dt} \inorm{\statevec v}^2_{\Omega_O} 
+  \oint_{\Gamma_b} \statevec v^T{\spacevec{\mmatrix A}\cdot\hat n_b}\statevec v ds 
+  \oint_{\Gamma_c} \statevec v^T{\spacevec{\mmatrix A}\cdot\hat n_c}\statevec v ds
 = 0.
\hfill
\end{gathered}
\label{eq:Overlap2Dnorms}
\end{equation}

Before moving further, we apply the boundary conditions along the inner physical boundary, $\Gamma_a$, and the outer physical boundary, $\Gamma_d$. For each, we split the matrix $\spacevec{\mmatrix A}\cdot\hat n = \mmatrix A^+ - |\mmatrix A^-|$  according to the positive and negative eigenvalues
and set the boundary condition $\statevec w^- = 0$, so that the incoming characteristic data, relative to the outward normal, is zero. Then
\begin{equation}
\begin{gathered}
\oint_{\Gamma_a} \statevec u^T\spacevec{\mmatrix A}\cdot\hat n_a\statevec u ds 
= \oint_{\Gamma_a} \statevec u^T{\mmatrix A^+}\statevec u ds \ge 0, \\ \hfill
\oint_{\Gamma_d} \statevec v^T\spacevec{\mmatrix A}\cdot\hat n_d\statevec v ds 
= \oint_{\Gamma_d} \statevec v^T{\mmatrix A^+}\statevec v ds \ge 0.
\end{gathered}
\label{eq:2DBCDissipation}
\end{equation}

We then construct the time derivative of the energy norm \eqref{eq:EnergyNorm} from \eqref{eq:U2DNorm}--\eqref{eq:Overlap2Dnorms}, so that with the boundary conditions \eqref{eq:2DBCDissipation} applied,
\begin{equation}
\begin{split}
\frac{d}{dt} E &= \frac{d}{dt}\inorm{\statevec u}_{\Omega_u}^2 + \frac{d}{dt}\inorm{\statevec v}_{\Omega_v}^2  - \left\{ \eta \frac{d}{dt}\inorm{\statevec u}_{\Omega_{O}}^2 +
 (1-\eta)\frac{d}{dt}\inorm{\statevec v}_{\Omega_{O}}^2 \right\}
 \\&
 + \oint_{\Gamma_b} \left\{2\statevec u^T\mmatrix{$\Sigma$}_u^b (\statevec u-\statevec v) 
 +\statevec v^T\spacevec{\mmatrix A}\cdot\hat n_b\statevec v 
 +  2\statevec v^T \mmatrix{$\Sigma$}_v^b (\statevec v-\statevec u) 
 -\eta  \statevec u^T{\spacevec{\mmatrix A}\cdot\hat n_b}\statevec u 
 -(1-\eta)\statevec v^T{\spacevec{\mmatrix A}\cdot\hat n_b}\statevec v\right\}              ds
 \\&
 + \oint_{\Gamma_c} \left\{\statevec u^T\spacevec{\mmatrix A}\cdot\hat n_c\statevec u  
 + 2\statevec u^T\mmatrix{$\Sigma$}_u^c (\statevec u-\statevec v) + 2\statevec v^T\mmatrix{$\Sigma$}_v^c (\statevec v-\statevec u)
 -\eta \statevec u^T{\spacevec{\mmatrix A}\cdot\hat n_c}\statevec u 
 -(1-\eta)\statevec v^T{\spacevec{\mmatrix A}\cdot\hat n_c}\statevec v\right\}
 ds
 \\& \le 0,
 \end{split}
 \label{eq:2DEnergyTimeDerivative}
\end{equation}
for $0 < \eta < 1$.

Energy boundedness therefore depends on the signs of the integrands of the line integrals. Gathering the terms in the integrands, let us write
\begin{equation}
\mathcal P_b =  -\eta\statevec u^T{\spacevec{\mmatrix A}\cdot\hat n_b}\statevec u + \eta\statevec v^T{\spacevec{\mmatrix A}\cdot\hat n_b}\statevec v +2\statevec u^T\mmatrix{$\Sigma$}_u^b (\statevec u-\statevec v) +2\statevec v^T \mmatrix{$\Sigma$}_v^b (\statevec v-\statevec u)
\end{equation}
and
\begin{equation}
\mathcal P_c = (1-\eta)\statevec u^T\spacevec{\mmatrix A}\cdot\hat n_c\statevec u -(1-\eta)\statevec v^T{\spacevec{\mmatrix A}\cdot\hat n_c}\statevec v + 2\statevec u^T\mmatrix{$\Sigma$}_u^c (\statevec u-\statevec v) + 2\statevec v^T\mmatrix{$\Sigma$}_v^c (\statevec v-\statevec u).
\end{equation}
As before, \eqref{eq:2DEnergyTimeDerivative} implies energy boundedness if $\mathcal P_a\ge0$ and $\mathcal P_b\ge 0$. 

Both  $\mathcal P_b$ and  $\mathcal P_c$ are of the same form, namely
\begin{equation}
\mathcal P = \beta\statevec u^T{\mmatrix A}\statevec u -\beta\statevec v^T{{\mmatrix A}}\statevec v + 2\statevec u^T\mmatrix{$\Sigma$}_u (\statevec u-\statevec v) + 2\statevec v^T\mmatrix{$\Sigma$}_v (\statevec v-\statevec u),
\label{eq:Pin2D}
\end{equation}
where 
\begin{equation}
\beta = \left\{
\begin{gathered}
-\eta, \quad \text{on }\Gamma_b \\ \hfill
(1-\eta),\text{on } \Gamma_c
\end{gathered}
   \right.
\label{eq:BetaCondition2D}
\end{equation}
and $\mmatrix A = \spacevec{\mmatrix A}\cdot\hat n$. Both are in the same form as the equivalents in one space dimension, \eqref{eq:Pin1D}.
So, as before, we re-write $\mathcal P$ as
\begin{equation}
\mathcal P =\left[\begin{array}{c}\statevec u \\\statevec v\end{array}\right]^T\mmatrix M  \left[\begin{array}{c}\statevec u \\\statevec v\end{array}\right] ,
\end{equation}
where
\begin{equation}
\mmatrix M =\left[\begin{array}{cc}\beta A + 2\mmatrix{$\Sigma$}_u& -(\mmatrix{$\Sigma$}_u  + \mmatrix{$\Sigma$}_v)\\-(\mmatrix{$\Sigma$}_u+ \mmatrix{$\Sigma$}_v) & -\beta\mmatrix A + 2\mmatrix{$\Sigma$}_v \end{array}\right].
\end{equation}
Therefore, a sufficient condition for energy boundedness is that $\mmatrix M\ge 0$, which we showed in Sec. \ref{sec:WeightedMultiplePenalty}  occurs when the conditions in \eqref{eq:1DSigmaMatrixConditionsW} are statisfied.

Under the conditions, \eqref{eq:1DSigmaMatrixConditionsW}, $\mathcal P_a, \mathcal P_b\ge 0$, and
\begin{equation}
\frac{d}{dt} E = \frac{d}{dt}\inorm{\statevec u}_{\Omega_u}^2 + \frac{d}{dt}\inorm{\statevec v}_{\Omega_v}^2  - \left\{ \eta\frac{d}{dt}\inorm{\statevec u}_{\Omega_{O}}^2 +
(1-\eta) \frac{d}{dt}\inorm{\statevec v}_{\Omega_{O}}^2 \right\}
\le0.
\label{eq:2DEnergyEquation}
\end{equation}
Integrating \eqref{eq:2DEnergyEquation} in time, and using \eqref{eq:NewNorms1DW},
\begin{equation}
\begin{split}
E(T) = \inorm{\statevec u(T)}_{\Omega_u}^2 + &\inorm{\statevec v(T)}_{\Omega_v}^2  - \left\{ \eta \inorm{\statevec u(T)}_{\Omega_{O}}^2 +
(1-\eta)\inorm{\statevec v(T)}_{\Omega_{O}}^2 \right\}
\\&
\le \inorm{\statevecGreek\omega_0}_{\Omega_u}^2 +\inorm{\statevecGreek\omega_0}_{\Omega_v}^2  - \left\{ \eta\inorm{\statevecGreek\omega_0}_{\Omega_{O}}^2 +(1-\eta)
\inorm{\statevecGreek\omega_0}_{\Omega_{O}}^2 \right\}
\\&= \inorm{\statevecGreek\omega_0}^2_{\Omega}.
\end{split}
\end{equation}
Then by Thm. \ref{lem:EBoundW}, $\inorm{\statevec u(T)}_{\Omega_u}^2 \le \inorm{\statevecGreek\omega_0}_{\Omega_{O}}^2/(1-\eta)$, $\inorm{\statevec v(T)}_{\Omega_v}^2\le \inorm{\statevecGreek\omega_0}_{\Omega_{O}}^2/\eta$ and the solutions at any time $T$ are bounded by the data.
We have therefore proved
\begin{thm}
For $\eta\in (0,1)$, and conditions \eqref{eq:1DSigmaMatrixConditionsW2}--\eqref{eq:1DSigmaMatrixConditionsW3}
on the coupling matrices, where $\beta$ is given by \eqref{eq:BetaCondition2D}, the overset domain problem \eqref{eq:OmegaU-RSSystem-MPu}-\eqref{eq:OmegaV-RSSystem-MPv} is energy bounded with
$E(T) \le \inorm{\statevecGreek \omega_0}_\Omega^2$, and further, $\inorm{\statevec u}_{\Omega_u}^2 \le \inorm{\statevecGreek\omega_0}_\Omega^2/(1-\eta)$ and $\inorm{\statevec v}_{\Omega_v}^2 \le \inorm{\statevecGreek\omega_0}_\Omega^2/\eta$.
\end{thm}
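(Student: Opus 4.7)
The plan is to mirror the one-dimensional argument of Sec. \ref{sec:WeightedMultiplePenalty} verbatim, with surface integrals replacing point evaluations at the interface. First I would test the weak forms \eqref{eq:OmegaU-RSSystem-MPuW} and \eqref{eq:OmegaV-RSSystem-MPvW} with $\statevecGreek\phi_u = \statevec u$ and $\statevecGreek\phi_v = \statevec v$, and likewise test the overlap weak forms \eqref{eq:2DWeakFormOveralap} with the same choices. Because $\mmatrix A_1$ and $\mmatrix A_2$ are symmetric and constant, each volume term $\iprod{\statevec q,\spacevec{\mmatrix A}\cdot\nabla\statevec q}_V$ reduces by the divergence theorem to $\tfrac{1}{2}\oint_{\partial V}\statevec q^T(\spacevec{\mmatrix A}\cdot\hat n)\statevec q\,ds$. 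On the multiply-connected domains $\Omega_u$ and $\Omega_v$ the line integrals along the artificial cuts sketched in Fig. \ref{fig:2DOversetGemetry} cancel, leaving only contributions along $\Gamma_a \cup \Gamma_c$ and $\Gamma_b \cup \Gamma_d$, respectively, and along $\Gamma_b \cup \Gamma_c$ for the overlap.

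Next I would dispose of the physical boundary terms on $\Gamma_a$ and $\Gamma_d$ using the characteristic, dissipative boundary conditions, which produce the sign-definite outgoing terms in \eqref{eq:2DBCDissipation} and can be dropped from the inequality. Then, following technique (T2), I would form the combined energy $E$ of \eqref{eq:EnergyNorm} by weighted subtraction of the overlap contributions \eqref{eq:Overlap2Dnorms} with weights $\eta$ and $1-\eta$. This collects all remaining surface terms into two line integrals $\oint_{\Gamma_b} \mathcal P_b\,ds$ and $\oint_{\Gamma_c} \mathcal P_c\,ds$, whose integrands take the generic form \eqref{eq:Pin2D} with $\mmatrix A := \spacevec{\mmatrix A}\cdot\hat n$ and with $\beta$ given by \eqref{eq:BetaCondition2D}.

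At this point the argument becomes a pointwise copy of the one-dimensional calculation: rewriting each integrand as a block quadratic form in $(\statevec u,\statevec v)$ and applying the rotation $\mmatrix R$, the conditions \eqref{eq:1DSigmaMatrixConditionsW2}--\eqref{eq:1DSigmaMatrixConditionsW3}, imposed pointwise on $\Gamma_b$ and $\Gamma_c$ with the appropriate $\beta$, zero out the off-diagonal block and render the upper-left block non-negative. Hence $\mathcal P_b,\mathcal P_c \ge 0$ everywhere on their respective curves, so $\frac{d}{dt}E \le 0$. Integrating in time and using $\eta + (1-\eta) = 1$ to absorb the double-counted overlap in the initial data yields $E(T) \le \inorm{\statevecGreek\omega_0}_\Omega^2$, and the individual subdomain bounds on $\inorm{\statevec u}_{\Omega_u}^2$ and $\inorm{\statevec v}_{\Omega_v}^2$ follow from Lemma \ref{lem:EBoundW} applied with $C = \inorm{\statevecGreek\omega_0}_\Omega^2$.

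The main obstacle, and the step I would be most careful with, is the sign bookkeeping on $\Gamma_b$: the outward normal relative to $\Omega_u$ on the hole boundary is opposite to the outward normal relative to $\Omega_v$, and the overlap region sees $\Gamma_b$ from yet another side. A consistent convention must be fixed before combining the three contributions into $\mathcal P_b$; this orientation issue is exactly what produces the sign choice $\beta = -\eta$ on $\Gamma_b$ versus $\beta = 1-\eta$ on $\Gamma_c$ in \eqref{eq:BetaCondition2D}. Once this is handled, the algebraic analysis from Sec. \ref{sec:WeightedMultiplePenalty} applies pointwise in the integrands and no new positivity arguments are needed.
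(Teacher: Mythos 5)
Your proposal is correct and follows essentially the same route as the paper: test with $\statevecGreek\phi_u=\statevec u$, $\statevecGreek\phi_v=\statevec v$, integrate by parts with cancelling cuts, drop the dissipative physical-boundary terms via \eqref{eq:2DBCDissipation}, form $E$ by weighted subtraction of the overlap energies, and reduce the interface integrands pointwise to the one-dimensional quadratic form $\mathcal P$ of \eqref{eq:Pin2D} with $\mmatrix A=\spacevec{\mmatrix A}\cdot\hat n$ and $\beta$ as in \eqref{eq:BetaCondition2D}, then invoke the rotation argument and Lemma \ref{lem:EBoundW}. Your attention to the orientation of $\hat n_b$ is exactly the bookkeeping that yields $\beta=-\eta$ on $\Gamma_b$ in the paper's derivation, so no new ideas are needed.
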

Furthermore, we have
\begin{thm}
For $\eta\in (0,1)$, and conditions \eqref{eq:1DSigmaMatrixConditionsW2}--\eqref{eq:1DSigmaMatrixConditionsW3}
on the coupling matrices, the solutions of the overset domain problem \eqref{eq:OmegaU-RSSystem-MPu}-\eqref{eq:OmegaV-RSSystem-MPv} are equivalent to the solution for the full domain problem, i.e. $\statevec u = \statevecGreek\omega$ for $\spacevec x\in\Omega_u$ and $\statevec v = \statevecGreek\omega$ for $\spacevec x\in\Omega_v$. Under the assumption that the original problem is well-posed, the overset domain problem is well-posed.
\label{thm:2DEquivalence}
\end{thm}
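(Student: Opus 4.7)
The plan is to mimic the 1D argument used in the proof of Thm.~\ref{thm:1DEquivalenceWithPenalty}, applying the preceding energy bound theorem for the 2D overset penalty formulation to a suitable difference problem. First I would introduce the perturbations
\begin{equation}
\statevec u' \equiv \statevec u - \statevecGreek\omega,\qquad \statevec v' \equiv \statevec v - \statevecGreek\omega,
\end{equation}
defined on $\Omega_u$ and $\Omega_v$ respectively. Because $\statevecGreek\omega$ solves the same hyperbolic system pointwise on all of $\Omega$, it solves it in particular on each subdomain, so $\statevec u'$ and $\statevec v'$ satisfy the same homogeneous PDEs as $\statevec u$ and $\statevec v$. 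The key algebraic observation is that the penalty couplings act on the jumps: $\statevec u - \statevec v = \statevec u' - \statevec v'$, so the lifting penalty terms $\mathcal L_u^{b,c}[\mmatrix{$\Sigma$}_u^{b,c}(\statevec u-\statevec v)]$ and $\mathcal L_v^{b,c}[\mmatrix{$\Sigma$}_v^{b,c}(\statevec v-\statevec u)]$ carry over verbatim to the primed variables.

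Next I would verify that the physical boundary conditions transfer. Since $B^a$ and $B^d$ are linear characteristic operators enforcing trivial incoming data and $\statevecGreek\omega$ satisfies the same conditions on $\Gamma_a$ and $\Gamma_d$, subtracting gives $\statevec u'(\spacevec x,t) = B^a(\statevec u'(\spacevec x,t))$ on $\Gamma_a$ and $\statevec v'(\spacevec x,t) = B^d(\statevec v'(\spacevec x,t))$ on $\Gamma_d$. Finally, because both $\statevec u$, $\statevec v$ and $\statevecGreek\omega$ start from the same initial field $\statevecGreek\omega_0$, the primed initial data vanish identically:
\begin{equation}
\statevec u'(\spacevec x,0) \equiv \statevec 0 \text{ on } \Omega_u,\qquad \statevec v'(\spacevec x,0) \equiv \statevec 0 \text{ on } \Omega_v.
\end{equation}
Thus $(\statevec u',\statevec v')$ solves the overset problem \eqref{eq:OmegaU-RSSystem-MPu}--\eqref{eq:OmegaV-RSSystem-MPv} with zero initial data.

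At this point I would invoke the energy bound established in the previous theorem, which under \eqref{eq:1DSigmaMatrixConditionsW2}--\eqref{eq:1DSigmaMatrixConditionsW3} and $\eta\in(0,1)$ yields
\begin{equation}
\inorm{\statevec u'(T)}_{\Omega_u}^2 \le \inorm{\statevecGreek 0}_\Omega^2/(1-\eta) = 0,\qquad
\inorm{\statevec v'(T)}_{\Omega_v}^2 \le \inorm{\statevecGreek 0}_\Omega^2/\eta = 0,
\end{equation}
forcing $\statevec u \equiv \statevecGreek\omega$ on $\Omega_u$ and $\statevec v \equiv \statevecGreek\omega$ on $\Omega_v$. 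Well-posedness of the overset problem (existence, uniqueness, and continuous dependence on the data) then follows from the hypothesis that the single-domain problem \eqref{eq:Omega2DProblem} is well-posed, together with the energy estimate just invoked.

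I do not anticipate a genuine obstacle, since the 2D geometry has been absorbed into the preceding energy-boundedness theorem; the potentially delicate point is purely bookkeeping, namely checking that the line integrals along the cuts sketched in Fig.~\ref{fig:2DOversetGemetry} cancel in exactly the same way for the primed and unprimed problems so that the penalty and boundary operators truly restrict to the difference variables without introducing spurious forcing. Once that linearity check is made, the argument is a direct transcription of the 1D proof.
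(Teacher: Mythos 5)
Your proposal is correct and follows exactly the route the paper takes: the paper's proof of Thm.~\ref{thm:2DEquivalence} is literally ``the proof follows that of Thm.~\ref{thm:1DEquivalenceWithPenalty},'' i.e., form the differences $\statevec u'=\statevec u-\statevecGreek\omega$, $\statevec v'=\statevec v-\statevecGreek\omega$, observe they satisfy the same penalized overset system with zero initial data (using linearity and $\statevec u-\statevec v=\statevec u'-\statevec v'$), and apply the 2D energy bound to conclude both differences vanish. Your additional remarks on the cut cancellations and boundary-operator linearity are just the bookkeeping the paper leaves implicit.
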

\begin{proof}
The proof follows that of Thm. \ref{thm:1DEquivalenceWithPenalty}.
\end{proof}

\subsubsection{Conservation}
Again, to show conservation we set $\statevecGreek\phi_u = \statevecGreek\phi_v = \statevecGreek\phi$, now in \eqref{eq:OmegaU-RSSystem-MPuW}--\eqref{eq:OmegaV-RSSystem-MPvW} and \eqref{eq:2DWeakFormOveralap}, apply integration by parts to the divergence terms, and combine as in constructing the energy to get
\begin{equation}
\begin{gathered}
 \iprod{\statevecGreek\phi,\statevec u_t}_{\Omega_u} 
 + \oint_{\Gamma_a}\statevecGreek\phi^T\spacevec{\mmatrix A}\cdot\hat n_a \statevec u ds
  + \oint_{\Gamma_c}\statevecGreek\phi^T\spacevec{\mmatrix A}\cdot\hat n_c \statevec u ds
 - \iprod{\spacevec{\mmatrix A}\cdot\nabla\statevecGreek\phi,\statevec u }_{\Omega_u}
 +\oint_{\Gamma_b}\statevecGreek\phi^T\mmatrix{$\Sigma$}_u^b (\statevec u-\statevec v)ds
 +\oint_{\Gamma_c}\statevecGreek\phi^T\mmatrix{$\Sigma$}_u^c (\statevec u-\statevec v)ds\hfill\\
+ \iprod{\statevecGreek\phi,\statevec v_t}_{\Omega_v} 
+  \oint_{\Gamma_b}\statevecGreek\phi^T\spacevec{\mmatrix A}\cdot\hat n_b  \statevec v ds 
+  \oint_{\Gamma_d}\statevecGreek\phi^T\spacevec{\mmatrix A}\cdot\hat n_d  \statevec v ds
- \iprod{\spacevec{\mmatrix A}\cdot\nabla\statevecGreek\phi, \statevec v}_{\Omega_v} 
+ \oint_{\Gamma_b}\statevecGreek\phi^T\mmatrix{$\Sigma$}_v^b (\statevec v-\statevec u)ds
+  \oint_{\Gamma_c}\statevecGreek\phi^T\mmatrix{$\Sigma$}_v^c (\statevec v-\statevec u)ds\hfill\\
-\eta\left\{ \iprod{\statevecGreek\phi,\statevec u_t}_{\Omega_O} 
+ \oint_{\Gamma_b}\statevecGreek\phi^T\spacevec{\mmatrix A}\cdot\hat n_b\statevec u ds
+ \oint_{\Gamma_c}\statevecGreek\phi^T\spacevec{\mmatrix A}\cdot\hat n_c\statevec u ds
- \iprod{\spacevec{\mmatrix A}\cdot\nabla \statevecGreek\phi, \statevec u}_{\Omega_O}
\right\}\hfill\\
-(1-\eta)\left\{\iprod{\statevecGreek\phi,\statevec v_t}_{\Omega_O} 
+  \oint_{\Gamma_b}\statevecGreek\phi^T\spacevec{\mmatrix A}\cdot\hat n_b \statevec v ds
+  \oint_{\Gamma_c}\statevecGreek\phi^T\spacevec{\mmatrix A}\cdot\hat n_c \statevec v ds
- \iprod{\spacevec{\mmatrix A}\cdot\nabla \statevecGreek\phi,\statevec v}_{\Omega_O}
\right\} \hfill\\
= 0.\hfill\\
\end{gathered}
\end{equation}
Following the procedure used for one space dimension, we then gather area, surface and boundary terms and re-organize
\begin{equation}
\begin{gathered}
 \iprod{\statevecGreek\phi,\statevec u_t}_{\Omega_u}  
 +  \iprod{\statevecGreek\phi,\statevec v_t}_{\Omega_v}  
 -\eta\iprod{\statevecGreek\phi,\statevec u_t}_{\Omega_O} 
 -(1-\eta)\iprod{\statevecGreek\phi,\statevec v_t}_{\Omega_O} \hfill\\
 -\left\{   
 \iprod{\spacevec{\mmatrix A}\cdot\nabla\statevecGreek\phi,\statevec u }_{\Omega_u} 
 + \iprod{\spacevec{\mmatrix A}\cdot\nabla\statevecGreek\phi, \statevec v}_{\Omega_v} 
 -\eta\iprod{\spacevec{\mmatrix A}\cdot\nabla \statevecGreek\phi, \statevec u}_{\Omega_O} 
 - (1-\eta) \iprod{\spacevec{\mmatrix A}\cdot\nabla \statevecGreek\phi,\statevec v}_{\Omega_O}
 \right\} \hfill\\
 +  \oint_{\Gamma_b} \statevecGreek\phi^T\left\{
\left(\mmatrix{$\Sigma$}_u^b -\mmatrix{$\Sigma$}_v^b\right)  
 -\eta\spacevec{\mmatrix A}\cdot\hat n_b
 \right\}(\statevec u-\statevec v)ds \hfill\\
  +  \oint_{\Gamma_c}
  \statevecGreek\phi^T\left\{
  \left(\mmatrix{$\Sigma$}_u^c -\mmatrix{$\Sigma$}_v^c\right)
  +(1-\eta)\spacevec{\mmatrix A}\cdot\hat n_c 
 \right\}(\statevec u - \statevec v)ds \hfill\\
+ \oint_{\Gamma_a}\statevecGreek\phi^T\spacevec{\mmatrix A}\cdot\hat n_a \statevec u ds
+  \oint_{\Gamma_d}\statevecGreek\phi^T\spacevec{\mmatrix A}\cdot\hat n_d  \statevec v ds \hfill\\
= 0.\hfill
\end{gathered}
\label{eq:2DConservationEquation}
\end{equation}
As in one space dimension, we have
\begin{thm}
The overset domain problem \eqref{eq:OmegaU-RSSystem-MPu}-\eqref{eq:OmegaV-RSSystem-MPv} is conservative if the terms in \eqref{eq:2DConservationEquation} along $\Gamma_b$ and $\Gamma_c$ vanish, which happens when
\begin{equation}
\beta \spacevec{\mmatrix A}\cdot\hat n + \mmatrix{$\Sigma$}_u = \mmatrix{$\Sigma$}_v
\label{eq:2DConsCondition}
\end{equation}
along each curve, where $\beta$ is given by \eqref{eq:BetaCondition2D}.
\label{thm:2DConservation}
\end{thm}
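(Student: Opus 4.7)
The plan is to follow the same pattern used in the one–dimensional analogue, Theorem \ref{thm:1DPenaltyConservationMatch}, which boils down to a direct substitution into the combined weak identity \eqref{eq:2DConservationEquation}. First I would take the test function $\statevecGreek\phi$ in \eqref{eq:2DConservationEquation} to be an arbitrary constant vector (equivalently, take components one at a time). Because $\nabla\statevecGreek\phi \equiv 0$, every volume inner product of the form $\iprod{\spacevec{\mmatrix A}\cdot\nabla\statevecGreek\phi,\,\cdot}$ drops out, leaving only the four time-derivative terms (with the $-\eta$ and $-(1-\eta)$ weights on $\Omega_O$), the two interface line integrals over $\Gamma_b$ and $\Gamma_c$, and the two physical-boundary line integrals over $\Gamma_a$ and $\Gamma_d$.

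Next, I would read off the coefficient of $(\statevec u - \statevec v)$ in each of the two interface integrands. On $\Gamma_b$ that coefficient is $(\mmatrix{$\Sigma$}_u^b - \mmatrix{$\Sigma$}_v^b) - \eta\,\spacevec{\mmatrix A}\cdot\hat n_b$, which vanishes precisely when $\beta \spacevec{\mmatrix A}\cdot\hat n_b + \mmatrix{$\Sigma$}_u^b = \mmatrix{$\Sigma$}_v^b$ with $\beta = -\eta$, matching \eqref{eq:BetaCondition2D}. On $\Gamma_c$ the coefficient is $(\mmatrix{$\Sigma$}_u^c - \mmatrix{$\Sigma$}_v^c) + (1-\eta)\,\spacevec{\mmatrix A}\cdot\hat n_c$, which vanishes when $\beta = 1-\eta$, again matching \eqref{eq:BetaCondition2D}. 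Hence, under the hypothesis \eqref{eq:2DConsCondition} the two interface contributions disappear identically for any $(\statevec u,\statevec v)$, and the identity reduces to
\begin{equation*}
\frac{d}{dt}\left\{\int_{\Omega_u}\!\statevec u\,dA + \int_{\Omega_v}\!\statevec v\,dA - \eta\!\int_{\Omega_O}\!\statevec u\,dA - (1-\eta)\!\int_{\Omega_O}\!\statevec v\,dA\right\} = -\oint_{\Gamma_a}\!\spacevec{\mmatrix A}\cdot\hat n_a\,\statevec u\,ds - \oint_{\Gamma_d}\!\spacevec{\mmatrix A}\cdot\hat n_d\,\statevec v\,ds.
\end{equation*}

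To give this the meaning of conservation for the original IBVP (as in Theorem \ref{thm:1DPenaltyConservationMatch}), I would then invoke Theorem \ref{thm:2DEquivalence}, which under the same coupling-matrix conditions gives $\statevec u = \statevecGreek\omega$ on $\Omega_u$ and $\statevec v = \statevecGreek\omega$ on $\Omega_v$. Because $\Omega_u\cup\Omega_v = \Omega$ with intersection $\Omega_O$, and because $\eta + (1-\eta) = 1$, the weighted combination on the left collapses exactly to $\frac{d}{dt}\int_\Omega \statevecGreek\omega\,dA$. The right-hand side is then the net boundary flux $\statevec f_a - \statevec f_d$ with $\statevec f_a = -\!\oint_{\Gamma_a}\spacevec{\mmatrix A}\cdot\hat n_a\,\statevecGreek\omega\,ds$ and analogously for $\statevec f_d$, which is the global conservation law on $\Omega$.

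I do not anticipate a substantive obstacle; the argument is essentially algebraic once \eqref{eq:2DConservationEquation} is in hand. The one bookkeeping step that requires care is the sign convention for the outward unit normals $\hat n_b$ and $\hat n_c$: they point outward from $\Omega_v$ and $\Omega_u$ respectively, so the signs of the $\eta$ and $(1-\eta)$ contributions coming from the overlap corrections in \eqref{eq:2DConservationEquation} must line up with \eqref{eq:BetaCondition2D}. Verifying that orientation convention is what justifies the single formula \eqref{eq:2DConsCondition} covering both interface curves simultaneously, and it is the only non-mechanical element of the proof.
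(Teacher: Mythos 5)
Your proposal is correct and follows essentially the same route as the paper: the condition \eqref{eq:2DConsCondition} is read off directly from the coefficients of $(\statevec u-\statevec v)$ in the $\Gamma_b$ and $\Gamma_c$ integrals of \eqref{eq:2DConservationEquation}, with the sign of $\beta$ on each curve matching \eqref{eq:BetaCondition2D} exactly as you verified. Your final two steps (setting $\statevecGreek\phi=1$ and invoking Thm.~\ref{thm:2DEquivalence} to collapse the weighted integrals to $\frac{d}{dt}\int_\Omega\statevecGreek\omega\,dA$) go slightly beyond the stated theorem and reproduce the paper's separate proof of Thm.~\ref{thm:Conservation2DOriginal}, but they are correct.
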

\begin{rem}
So, as in one space dimension, conservation is necessary for energy boundedness and energy boundedness is sufficient for conservation.
\end{rem}

\noindent Finally, as in one space dimension, 
\begin{thm}
Conservation in the overset domain problem \eqref{eq:OmegaU-RSSystem-MPu}-\eqref{eq:OmegaV-RSSystem-MPv} is equivalent to conservation in the original domain problem, \eqref{eq:Omega2DProblem}.
\label{thm:Conservation2DOriginal}
\end{thm}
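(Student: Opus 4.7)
The plan is to mirror the one-dimensional argument in Thm.~\ref{thm:1DPenaltyConservationMatch}, exploiting the structure already established in \eqref{eq:2DConservationEquation}. I would start by choosing the test function to be a constant vector $\statevecGreek\phi \equiv \statevec c$ with $\statevec c\in\mathbb R^m$ arbitrary. With this choice every inner product of the form $\iprod{\spacevec{\mmatrix A}\cdot\nabla\statevecGreek\phi,\cdot}$ vanishes, since $\nabla\statevecGreek\phi\equiv 0$. Next I would invoke the conservation condition \eqref{eq:2DConsCondition} from Thm.~\ref{thm:2DConservation} along each of $\Gamma_b$ and $\Gamma_c$. This zeros out the two interface line integrals in \eqref{eq:2DConservationEquation} that couple $\statevec u$ and $\statevec v$, leaving only the time-derivative inner products and the physical boundary fluxes along $\Gamma_a$ and $\Gamma_d$.

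What remains, after factoring out $\statevec c^T$ and using its arbitrariness, is
\begin{equation*}
\frac{d}{dt}\!\left\{ \int_{\Omega_u}\!\statevec u\, dV + \int_{\Omega_v}\!\statevec v\, dV -\eta\!\int_{\Omega_O}\!\statevec u\, dV-(1-\eta)\!\int_{\Omega_O}\!\statevec v\, dV\right\}
= -\oint_{\Gamma_a}\!\spacevec{\mmatrix A}\cdot\hat n_a\,\statevec u\, ds - \oint_{\Gamma_d}\!\spacevec{\mmatrix A}\cdot\hat n_d\,\statevec v\, ds.
\end{equation*}
The right-hand side is exactly the net flux through the physical boundary $\partial\Omega=\Gamma_a\cup\Gamma_d$ of the original problem. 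Now I would apply Thm.~\ref{thm:2DEquivalence}, which guarantees $\statevec u=\statevecGreek\omega$ on $\Omega_u$ and $\statevec v=\statevecGreek\omega$ on $\Omega_v$ (and hence both equal $\statevecGreek\omega$ on $\Omega_O$). The two weighted overlap integrals therefore collapse to $\int_{\Omega_O}\statevecGreek\omega\,dV$, independent of $\eta$, and since $\Omega_u\cup\Omega_v=\Omega$ with $\Omega_u\cap\Omega_v=\Omega_O$ double-counted in the first two integrals, the bracketed quantity reduces to $\int_\Omega\statevecGreek\omega\,dV$. This yields
\begin{equation*}
\frac{d}{dt}\int_\Omega \statevecGreek\omega\, dV = -\oint_{\Gamma_a}\spacevec{\mmatrix A}\cdot\hat n_a\,\statevecGreek\omega\, ds - \oint_{\Gamma_d}\spacevec{\mmatrix A}\cdot\hat n_d\,\statevecGreek\omega\, ds,
\end{equation*}
which is precisely the conservation statement for the single-domain problem \eqref{eq:Omega2DProblem} obtained by integrating the PDE directly over $\Omega$ and applying the divergence theorem. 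For the converse, conservation of the original problem together with Thm.~\ref{thm:2DEquivalence} immediately gives the same identity in the overset variables, establishing the equivalence.

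The main obstacle I anticipate is bookkeeping rather than analysis: keeping the orientations of $\hat n_b$ and $\hat n_c$ consistent between the $\Omega_u$, $\Omega_v$ and $\Omega_O$ contributions, and verifying that the two interface integrals indeed cancel identically under \eqref{eq:2DConsCondition} with the $\beta$ values dictated by \eqref{eq:BetaCondition2D} (and in particular the opposite sign on $\Gamma_b$ versus $\Gamma_c$, which reflects the fact that $\hat n_b$ and $\hat n_c$ point in opposite senses relative to the overlap region). Once the orientation conventions of Fig.~\ref{fig:2DOversetGemetry} are pinned down, the remainder is a direct transcription of the 1D argument, so I would not expect any analytic surprises beyond what is already handled by Thms.~\ref{thm:2DEquivalence} and~\ref{thm:2DConservation}.
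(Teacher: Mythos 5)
Your proposal is correct and follows essentially the same route as the paper: choose a constant test function in \eqref{eq:2DConservationEquation}, invoke the conservation condition \eqref{eq:2DConsCondition} to annihilate the interface integrals along $\Gamma_b$ and $\Gamma_c$, and then use Thm.~\ref{thm:2DEquivalence} to collapse the weighted subdomain integrals to $\int_\Omega\statevecGreek\omega\,dV$. Your use of an arbitrary constant vector $\statevec c$ rather than $\statevecGreek\phi=1$ and your explicit remark on the converse are only minor elaborations of the paper's argument.
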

\begin{proof}
Enforcing \eqref{eq:2DConsCondition} and choosing $\statevecGreek\phi = 1$, \eqref{eq:2DConservationEquation} becomes
\begin{equation}
\begin{split}
&\frac{d}{dt}\left\{\int_{\Omega_u} \statevec u dxdy+ \int_{\Omega_v} \statevec v dxdy-\eta\int_{\Omega_O} \statevec u dxdy -(1-\eta)\int_{\Omega_O} \statevec v dxdy\right\}
\\&= -\left\{ \oint_{\Gamma_a}\spacevec{\mmatrix A}\cdot\hat n_a \statevec u ds
+  \oint_{\Gamma_d}\spacevec{\mmatrix A}\cdot\hat n_d  \statevec v ds\right\}
\\&
= -\left\{ \oint_{\Gamma_a}\spacevec{\statevec f}\cdot\hat n_a ds
+  \oint_{\Gamma_d}\spacevec{\statevec f}\cdot\hat n_d ds\right\}.
\end{split}
\label{eq:2DConservation1}
\end{equation} 
Given the equivalence between the overset solutions and the full domain solution through Thm. \ref{thm:2DEquivalence}, i.e. $\statevec u = \statevecGreek\omega$ and  $\statevec v = \statevecGreek\omega$, \eqref{eq:2DConservation1} is equivalent to
\begin{equation}
    \frac{d}{dt}\int_\Omega \statevecGreek\omega dxdy=  -\left\{ \oint_{\Gamma_a}\spacevec{\statevec f}\cdot\hat n_a ds
+  \oint_{\Gamma_d}\spacevec{\statevec f}\cdot\hat n_d ds\right\},
\end{equation}
i.e., the rate of change of the total amount of $\statevecGreek\omega$ is given by the difference between the flux in and the flux out of the full domain, $\Omega$.

\end{proof}

\subsection{Boundary Plus Overlap Coupling}

To enable stronger coupling between the problems in the overlap region, we can add additional penalties in its interior. Let $\left\{\spacevec x_m\right\}^M_{m=1}$ be a set of points in the overlap region, $\Omega_O$. Then we modify \eqref{eq:OmegaU-RSSystem-MPu} and \eqref{eq:OmegaV-RSSystem-MPv} by adding penalties at those points as
\begin{equation}
O\;\left\{\begin{gathered}
\statevec u_t + \spacevec{\mmatrix A}\cdot\nabla\statevec u  + \mathcal L_u^b \left[ \mmatrix{$\Sigma$}_u^b (\statevec u-\statevec v)\right] 
+ \mathcal L_u^c \left[ \mmatrix{$\Sigma$}_u^c (\statevec u-\statevec v)\right] 
+ \frac{1}{M}\sum_{m=1}^M\mathcal L_u^m \left[ \mmatrix{$\Sigma$}_u^m (\statevec u-\statevec v)\right]
= 0, \quad\spacevec x\in\Omega_u\hfill\\
\statevec u(\spacevec x,t) = B^a\left( \statevec u(\spacevec x,t)\right),\quad \spacevec x \in \Gamma_a\hfill\\
\statevec u(\spacevec x,0) = \statevec u_0(x),\quad \spacevec x\in\Omega_u \hfill
\end{gathered} \right.
\label{eq:InteriorPenaltyStrongU}
\end{equation}
\begin{equation}
B\;\left\{\begin{gathered}
\statevec v_t + \spacevec{\mmatrix A}\cdot\nabla\statevec v + \mathcal L_v^b \left[ \mmatrix{$\Sigma$}_v^b (\statevec v-\statevec u)\right]
+ \mathcal L_v^c \left[ \mmatrix{$\Sigma$}_v^c (\statevec v-\statevec u)\right]
+ \frac{1}{M}\sum_{m=1}^M\mathcal L_v^m \left[ \mmatrix{$\Sigma$}_v^m (\statevec v-\statevec u)\right]
= 0, \quad\spacevec x\in\Omega_v\hfill\\
\statevec v(\spacevec x,t) = B^d\left( \statevec v(\spacevec x,t)\right)\quad \spacevec x \in \Gamma_d\hfill\\
\statevec v(x,0) = \statevec v_0(x),\quad \spacevec x\in\Omega_v ,\hfill
\end{gathered} \right.
\label{eq:InteriorPenaltyStrongV}
\end{equation}
where the new lifting operators, $\mathcal L^m_{*}$, select the values at the points $\spacevec x^m$,
\begin{equation}
    \int_{\Omega_O} \statevecGreek\psi^T\mathcal L_{*}^m(\phi)dxdy = \left.\statevecGreek\psi^T\statevecGreek\phi\right|_{\spacevec x^m}.
\end{equation}

The weak forms  \eqref{eq:OmegaU-RSSystem-MPuW}--\eqref{eq:OmegaV-RSSystem-MPvW} and \eqref{eq:2DWeakFormOveralap}
then become 
\begin{equation}
\begin{gathered}
\iprod{\statevecGreek\phi_u,\statevec u_t}_{\Omega_u}  + Q_u(\statevecGreek \phi_u,\statevec u, \statevec v) + \frac{1}{M}\sum_{m=1}^M\statevecGreek\phi_u^T\mmatrix{$\Sigma$}_u^m (\statevec u-\statevec v)= 0\hfill\\
\iprod{\statevecGreek\phi_v,\statevec v_t}_{\Omega_v}   + Q_v(\statevecGreek \phi_v,\statevec u, \statevec v)+ \frac{1}{M}\sum_{m=1}^M\statevecGreek\phi_v^T\mmatrix{$\Sigma$}_v^m (\statevec v-\statevec u) = 0\hfill\\
\iprod{\statevecGreek\phi_u,\statevec u_t}_{\Omega_O} + O_u(\statevecGreek \phi_u,\statevec u, \statevec v) + \frac{1}{M}\sum_{m=1}^M\statevecGreek\phi_u^T\mmatrix{$\Sigma$}_u^m (\statevec u-\statevec v)  = 0\hfill\\
\iprod{\statevecGreek\phi_v,\statevec v_t}_{\Omega_O}  + O_v(\statevecGreek \phi_v,\statevec u, \statevec v) +\frac{1}{M} \sum_{m=1}^M\statevecGreek\phi^T_v\mmatrix{$\Sigma$}_v^m (\statevec v-\statevec u)= 0,
\end{gathered}
\end{equation}
where
\begin{equation}
\begin{gathered}
Q_u(\statevecGreek \phi_u,\statevec u, \statevec v) \equiv  \iprod{\statevecGreek\phi_u, \spacevec{\mmatrix A}\cdot\nabla\statevec u}_{\Omega_u}   
+  \oint_{\Gamma_b} \statevecGreek\phi^T_u\mmatrix{$\Sigma$}_u^b (\statevec u-\statevec v)ds 
+ \oint_{\Gamma_c}\statevecGreek\phi^T_u\mmatrix{$\Sigma$}_u^c (\statevec u-\statevec v)ds\hfill\\
Q_v(\statevecGreek \phi_v,\statevec u, \statevec v) \equiv  \iprod{\statevecGreek\phi_v, \spacevec{\mmatrix A}\cdot\nabla\statevec v }_{\Omega_v}
+ \oint_{\Gamma_b} \statevecGreek\phi^T_v\mmatrix{$\Sigma$}_v^b (\statevec v-\statevec u)ds
+ \oint_{\Gamma_c} \statevecGreek\phi^T_v\ \mmatrix{$\Sigma$}_v^c (\statevec v-\statevec u)ds\hfill\\
O_u(\statevecGreek \phi_u,\statevec u, \statevec v) \equiv \iprod{\statevecGreek\phi_u, \spacevec{\mmatrix A}\cdot\nabla\statevec u}_{\Omega_O}\hfill\\
O_v(\statevecGreek \phi_v,\statevec u, \statevec v) \equiv \iprod{\statevecGreek\phi_v, \spacevec{\mmatrix A}\cdot\nabla\statevec v }_{\Omega_O}\hfill
\end{gathered}
\end{equation}
contain the parts already analyzed.
The previous sections on energy boundedness and conservation show that
\begin{equation}
Q_u(\statevec u,\statevec u, \statevec v) + Q_v(\statevec v,\statevec u, \statevec v) -\eta O_u(\statevec u,\statevec u, \statevec v) - (1-\eta)O_v(\statevec v,\statevec u, \statevec v) \ge 0
\end{equation}
and that
\begin{equation}
Q_u(1,\statevec u, \statevec v) + Q_v(1,\statevec u, \statevec v) -\eta O_u(1,\statevec u, \statevec v) - (1-\eta)O_v(1,\statevec u, \statevec v)= -\left\{ \oint_{\Gamma_a}\spacevec{\statevec f}\cdot\hat n_a ds
+  \oint_{\Gamma_d}\spacevec{\statevec f}\cdot\hat n_d ds\right\}
\end{equation}
when the trivial inflow boundary conditions are imposed and the coupling conditions \eqref{eq:1DSigmaMatrixConditionsW} hold. Therefore, to show energy boundedness and conservation with the penalty terms in the overlap region, we need only consider the consequences of those.

With the overlap penalty terms, the time derivative of the energy is
\begin{equation}
\begin{split}
 \frac{d}{dt}&\left\{\inorm{\statevec u}^2_{\Omega_u} + \inorm{\statevec v}^2_{\Omega_v} -\eta \inorm{\statevec u}^2_{\Omega_O}- (1-\eta)\inorm{\statevec v}^2_{\Omega_O}\right\} 
\\&
+ \frac{1}{M}\sum_{m=1}^M\left\{2(1-\eta) \statevec u^T\mmatrix{$\Sigma$}_u^m (\statevec u-\statevec v) + 2\eta\statevec u^T\mmatrix{$\Sigma$}_v^m (\statevec v-\statevec u)\right\}\le 0.
\end{split}
\label{eq:MultiPointPenaltyEnergyEqn}
\end{equation}
Therefore the overset problem is energy bounded if
\begin{equation}
\mathcal P_m = 2(1-\eta) \statevec u^T\mmatrix{$\Sigma$}_u^m (\statevec u-\statevec v) + 2\eta\statevec v^T\mmatrix{$\Sigma$}_v^m (\statevec v-\statevec u)\ge 0.
\end{equation}
To find the conditions on the coefficient matrices, we write $\mathcal P_m$ as
\begin{equation}
\mathcal P_m =\left[\begin{array}{c}\statevec u \\\statevec v\end{array}\right]^T\mmatrix M_m  \left[\begin{array}{c}\statevec u \\\statevec v\end{array}\right] ,
\end{equation}
where $\mmatrix M_m$ is independent of the advection coefficient matrices,
\begin{equation}
\mmatrix M_m =\left[\begin{array}{cc} 2(1-\eta)\mmatrix{$\Sigma$}^m_u& -((1-\eta)\mmatrix{$\Sigma$}^m_u  + \eta\mmatrix{$\Sigma$}^m_v)\\-((1-\eta)\mmatrix{$\Sigma$}^m_u+ \eta\mmatrix{$\Sigma$}^m_v) & -2\eta\mmatrix{$\Sigma$}^m_v \end{array}\right].
\end{equation}
As in Sec. \ref{sec:1DEnergyBoundedness}, the precise condition is 
found by rotating the matrix, where, now, 
\begin{equation}
\tilde{\mmatrix M}_m =\frac{1}{2} \left[\begin{array}{cc} 2((1-\eta)\mmatrix{$\Sigma$}^m_u+\eta\mmatrix{$\Sigma$}_v^m)& (1-\eta)\mmatrix{$\Sigma$}^m_u  - \eta\mmatrix{$\Sigma$}^m_v
\\(1-\eta)\mmatrix{$\Sigma$}^m_u- \eta\mmatrix{$\Sigma$}^m_v & 0 \end{array}\right].
\end{equation}
Following the arguments in Sec. \ref{sec:1DEnergyBoundedness}, the coupling matrices must be related by 
\begin{equation}
(1-\eta)\mmatrix{$\Sigma$}^m_u = \eta\mmatrix{$\Sigma$}_v^m,
\label{eq:InteriorPenaltyCoupling}
\end{equation}
and when satisfied, 
\begin{equation}
\mathcal P_m = 2(1-\eta)(\statevec u - \statevec v)^T\mmatrix{$\Sigma$}^m_u (\statevec u - \statevec v)\ge 0
\end{equation}
adds dissipation to the system. The amount of that dissipation can be controlled by the size of $\mmatrix{$\Sigma$}^m_u$ and $\mmatrix{$\Sigma$}^m_v$. 

We therefore have
\begin{thm}
For $\eta\in (0,1)$, and conditions \eqref{eq:1DSigmaMatrixConditionsW2}--\eqref{eq:1DSigmaMatrixConditionsW3}
on the interface coupling matrices, where $\beta$ is given by \eqref{eq:BetaCondition2D}, and \eqref{eq:InteriorPenaltyCoupling} on the interior penalty coupling matrices, the overset domain problem \eqref{eq:InteriorPenaltyStrongU}-\eqref{eq:InteriorPenaltyStrongV} is energy bounded with
$E(T) \le \inorm{\statevecGreek \omega_0}_\Omega^2$, and further, $\inorm{\statevec u}_{\Omega_u}^2 \le \inorm{\statevecGreek\omega_0}_\Omega^2/(1-\eta)$ and $\inorm{\statevec v}_{\Omega_v}^2 \le \inorm{\statevecGreek\omega_0}_\Omega^2/\eta$.
\end{thm}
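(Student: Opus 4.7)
The plan is to reuse the machinery developed for the boundary-only coupling case and treat the interior overlap penalties as an additional, independently non-negative contribution. First I would select $\statevecGreek\phi_u = \statevec u$ in the weak forms on $\Omega_u$ and $\Omega_O$, and $\statevecGreek\phi_v = \statevec v$ in the weak forms on $\Omega_v$ and $\Omega_O$. Combining them with weights $1, 1, -\eta, -(1-\eta)$ as dictated by the energy norm $E$ in \eqref{eq:EnergyNorm}, the contributions from the pieces $Q_u, Q_v, O_u, O_v$ collapse exactly into the expression analyzed in the boundary-only case. Under the physical boundary conditions along $\Gamma_a$ and $\Gamma_d$ and the interface coupling conditions \eqref{eq:1DSigmaMatrixConditionsW2}--\eqref{eq:1DSigmaMatrixConditionsW3} with $\beta$ given by \eqref{eq:BetaCondition2D}, the previous theorem has already shown that these produce a non-positive contribution to $\tfrac{d}{dt}E$. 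Thus the only new work is the handling of the interior penalty sum in \eqref{eq:MultiPointPenaltyEnergyEqn}.

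Second, I would show that each interior penalty contribution $\mathcal P_m$ is non-negative. This is essentially the argument just presented before the theorem: write $\mathcal P_m = [\statevec u^T\ \statevec v^T]\, \mmatrix M_m [\statevec u^T\ \statevec v^T]^T$, apply the same rotation $\mmatrix R$ used in Sec. \ref{sec:1DEnergyBoundedness}, and observe that the coupling condition \eqref{eq:InteriorPenaltyCoupling} annihilates the off-diagonal block of the rotated matrix $\tilde{\mmatrix M}_m$. The remaining upper-left block $2((1-\eta)\mmatrix{$\Sigma$}^m_u + \eta\mmatrix{$\Sigma$}^m_v)$ is non-negative provided $\mmatrix{$\Sigma$}^m_u \ge 0$ (equivalently $\mmatrix{$\Sigma$}^m_v \ge 0$ by \eqref{eq:InteriorPenaltyCoupling}), which is implicit in the hypothesis that these are dissipative penalties. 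Consequently $\mathcal P_m \ge 0$ for every $m$, and summing over $m$ together with the earlier bound gives $\tfrac{d}{dt} E \le 0$.

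Third, I would integrate the resulting inequality from $0$ to $T$. Since $\mathcal P_m \ge 0$ and the interface contributions are likewise non-negative, $E(T) \le E(0)$. Evaluating $E(0)$ using the common initial data $\statevecGreek\omega_0$ and noting, as in the proof of Thm. \ref{thm:EnergyBound1DSystemPenalty}, that the doubly-counted overlap portion is exactly removed by the $\eta$ and $(1-\eta)$ weights, gives $E(0) = \inorm{\statevecGreek\omega_0}_{\Omega_u}^2 + \inorm{\statevecGreek\omega_0}_{\Omega_v}^2 - \inorm{\statevecGreek\omega_0}_{\Omega_O}^2 = \inorm{\statevecGreek\omega_0}_\Omega^2$. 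Applying Lemma \ref{lem:EBoundW} with $C = \inorm{\statevecGreek\omega_0}_\Omega^2$ then yields the two individual subdomain bounds $\inorm{\statevec u}_{\Omega_u}^2 \le \inorm{\statevecGreek\omega_0}_\Omega^2/(1-\eta)$ and $\inorm{\statevec v}_{\Omega_v}^2 \le \inorm{\statevecGreek\omega_0}_\Omega^2/\eta$.

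Because the boundary-only analysis has already been carried out in detail and the matrix-rotation argument for the interior penalty term has been essentially completed in the discussion preceding the theorem, I do not anticipate a serious obstacle here; the proof is mostly a matter of bookkeeping to confirm that the new penalty terms enter the energy identity additively and with the right sign. The only mild subtlety is being careful that the factors $1/M$ and the weights $1-\eta, \eta$ recombine cleanly in \eqref{eq:MultiPointPenaltyEnergyEqn} so that the condition \eqref{eq:InteriorPenaltyCoupling} is precisely what kills the off-diagonal block, rather than something slightly different that would require rescaling $\mmatrix{$\Sigma$}^m_u$ or $\mmatrix{$\Sigma$}^m_v$.
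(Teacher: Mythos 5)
Your proposal is correct and follows essentially the same route as the paper: reuse the boundary-only analysis for the $Q_u,Q_v,O_u,O_v$ contributions, isolate the interior penalty sum, show $\mathcal P_m \ge 0$ via the rotation argument under \eqref{eq:InteriorPenaltyCoupling} (with the implicit non-negativity of $\mmatrix{$\Sigma$}^m_u$, which the paper also assumes when calling the term dissipative), then integrate in time and invoke Lemma \ref{lem:EBoundW}. Your observation that the weights $(1-\eta)$ and $\eta$ on the interior penalties arise from combining the $\Omega_u$/$\Omega_v$ and $\Omega_O$ weak forms is exactly the bookkeeping the paper performs in arriving at \eqref{eq:MultiPointPenaltyEnergyEqn}.
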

\noindent We also state without proof, as the proof follows that of Thm. \ref{thm:1DEquivalenceWithPenalty},
\begin{thm}
For $\eta\in (0,1)$, and conditions \eqref{eq:1DSigmaMatrixConditionsW2}--\eqref{eq:1DSigmaMatrixConditionsW3}
on the interface coupling matrices, and \eqref{eq:InteriorPenaltyCoupling} on the interior penalty coupling matrices, the solutions of the overset domain problem \eqref{eq:InteriorPenaltyStrongU}-\eqref{eq:InteriorPenaltyStrongV} are equivalent to the solution for the full domain problem, i.e. $\statevec u = \statevecGreek\omega$ for $x\in\Omega_u$ and $\statevec v = \statevecGreek\omega$ for $x\in\Omega_v$. Under the assumption that the original problem is well-posed, the overset domain problem is well-posed.
\label{thm:Wellposed2DWithOverlapPenalty}
\end{thm}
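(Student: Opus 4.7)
The plan is to mimic the proof of Thm.~\ref{thm:1DEquivalenceWithPenalty} verbatim, leveraging the energy bound just established for the overlap-penalized problem. First I would verify that the single-domain solution $\statevecGreek\omega$ of \eqref{eq:Omega2DProblem} provides a solution pair $(\statevecGreek\omega,\statevecGreek\omega)$ of the overset system \eqref{eq:InteriorPenaltyStrongU}-\eqref{eq:InteriorPenaltyStrongV}. Indeed, the PDE $\statevecGreek\omega_t + \spacevec{\mmatrix A}\cdot\nabla\statevecGreek\omega = 0$ holds on all of $\Omega$ and hence restricts to each subdomain; the physical boundary operators $B^a, B^d$ are unchanged; and every penalty term---whether the interface liftings $\mathcal L^b, \mathcal L^c$ or the interior point liftings $\mathcal L^m$---is applied to the difference $\statevecGreek\omega - \statevecGreek\omega = \statevec 0$ and therefore vanishes identically.

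Next, I would introduce the error variables $\statevec u' = \statevec u - \statevecGreek\omega$ on $\Omega_u$ and $\statevec v' = \statevec v - \statevecGreek\omega$ on $\Omega_v$. By linearity of the PDE and of the physical boundary operators, and because every penalty term involves the difference $\statevec u - \statevec v = \statevec u' - \statevec v'$, the pair $(\statevec u', \statevec v')$ satisfies exactly the same overset system \eqref{eq:InteriorPenaltyStrongU}-\eqref{eq:InteriorPenaltyStrongV}, but with trivial initial data $\statevec u'(\spacevec x,0) = \statevec v'(\spacevec x,0) = \statevec 0$.

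I would then invoke the preceding energy boundedness theorem applied to $(\statevec u', \statevec v')$. Under the hypothesized coupling conditions \eqref{eq:1DSigmaMatrixConditionsW2}-\eqref{eq:1DSigmaMatrixConditionsW3} on the interface matrices and \eqref{eq:InteriorPenaltyCoupling} on the interior matrices, the bound yields $\inorm{\statevec u'(T)}_{\Omega_u}^2 \le \inorm{\statevec 0}_\Omega^2/(1-\eta) = 0$ and $\inorm{\statevec v'(T)}_{\Omega_v}^2 \le \inorm{\statevec 0}_\Omega^2/\eta = 0$, so that $\statevec u' \equiv \statevec 0$ and $\statevec v' \equiv \statevec 0$. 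Hence $\statevec u = \statevecGreek\omega$ on $\Omega_u$ and $\statevec v = \statevecGreek\omega$ on $\Omega_v$. Since the single-domain problem \eqref{eq:Omega2DProblem} is assumed well-posed, existence, uniqueness and continuous dependence on data are inherited by the overset problem.

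The only real subtlety, rather than a genuine obstacle, is that the interior point penalties $\mathcal L^m_{u,v}$ are singular distributions supported at isolated points $\spacevec x^m$, so one must check that the substitution $\statevec u = \statevec v = \statevecGreek\omega$ is legitimate in the strong form. This is resolved by the linearity of $\mathcal L^m_{*}$ in its argument: $\mathcal L^m_{*}[\mmatrix{$\Sigma$}^m(\statevecGreek\omega - \statevecGreek\omega)] = \mathcal L^m_{*}[\statevec 0] = \statevec 0$ distributionally, or equivalently in the weak form each pointwise penalty contribution vanishes. Everything else is a direct transcription of the one-dimensional argument of Thm.~\ref{thm:1DEquivalenceWithPenalty}, made to work in two dimensions and with additional interior penalties by the energy framework developed immediately above.
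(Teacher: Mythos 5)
Your proposal is correct and is precisely the argument the paper intends: the paper states this theorem without proof, remarking only that it follows that of Thm.~\ref{thm:1DEquivalenceWithPenalty}, and your error-variable construction, the observation that all penalty terms depend only on the difference $\statevec u - \statevec v = \statevec u' - \statevec v'$, and the application of the preceding energy bound with trivial initial data reproduce that argument faithfully in the two-dimensional, overlap-penalized setting. The extra remark about the point liftings vanishing on the zero argument is a sensible clarification but does not change the route.
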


\noindent Finally,
\begin{thm}
With the coupling conditions \eqref{eq:InteriorPenaltyCoupling}, the overset domain problem \eqref{eq:InteriorPenaltyStrongU}-\eqref{eq:InteriorPenaltyStrongV} remains conservative.
\label{thm:2DOverlapPenaltyConservation}
\end{thm}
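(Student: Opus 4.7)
My plan is to follow the pattern of the proof of Thm. \ref{thm:Conservation2DOriginal}, and show that the additional interior penalty terms in \eqref{eq:InteriorPenaltyStrongU}--\eqref{eq:InteriorPenaltyStrongV} make zero net contribution to the conservation identity once the coupling condition \eqref{eq:InteriorPenaltyCoupling} is imposed. Concretely, I first set $\statevecGreek\phi_u = \statevecGreek\phi_v = \statevecGreek\phi$ in the four weak forms associated with the extended problem, and form the same weighted combination $[\,\iprod{\cdot}_{\Omega_u}\,] + [\,\iprod{\cdot}_{\Omega_v}\,] - \eta[\,\iprod{\cdot}_{\Omega_O}\,] - (1-\eta)[\,\iprod{\cdot}_{\Omega_O}\,]$ used in the energy analysis \eqref{eq:MultiPointPenaltyEnergyEqn}. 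The contributions coming from $Q_u, Q_v, O_u, O_v$ reproduce exactly \eqref{eq:2DConservationEquation}, whose line integrals along $\Gamma_b$ and $\Gamma_c$ vanish under the interface coupling condition \eqref{eq:2DConsCondition}, as in the original proof.

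Next I track the new interior penalty contributions. Because $\spacevec x^m \in \Omega_O \subset \Omega_u \cap \Omega_v$, each penalty term appears in \emph{both} the full subdomain weak form and the overlap weak form. Adding with the weights above, the coefficient of $\mmatrix{$\Sigma$}_u^m(\statevec u-\statevec v)$ is $1 - \eta$ and the coefficient of $\mmatrix{$\Sigma$}_v^m(\statevec v-\statevec u)$ is $1 - (1-\eta) = \eta$, producing
\begin{equation*}
\frac{1}{M}\sum_{m=1}^M \statevecGreek\phi^T\left[(1-\eta)\mmatrix{$\Sigma$}_u^m - \eta\mmatrix{$\Sigma$}_v^m\right](\statevec u - \statevec v)\bigg|_{\spacevec x^m},
\end{equation*}
which vanishes identically under \eqref{eq:InteriorPenaltyCoupling}.

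Finally, choosing $\statevecGreek\phi = 1$ annihilates every $\spacevec{\mmatrix A}\cdot\nabla\statevecGreek\phi$ inner product and reduces the remaining identity to \eqref{eq:2DConservation1}. Invoking Thm. \ref{thm:Wellposed2DWithOverlapPenalty}, which delivers $\statevec u = \statevecGreek\omega$ on $\Omega_u$ and $\statevec v = \statevecGreek\omega$ on $\Omega_v$, the weighted combination of subdomain integrals collapses to $\int_\Omega \statevecGreek\omega\, dxdy$, recovering the conservation law of the original problem \eqref{eq:Omega2DProblem} with fluxes through $\Gamma_a$ and $\Gamma_d$.

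The only subtle point is the weight bookkeeping: I must check that the four weak-form equations combine to give precisely the coefficient $(1-\eta)\mmatrix{$\Sigma$}_u^m - \eta\mmatrix{$\Sigma$}_v^m$ which \eqref{eq:InteriorPenaltyCoupling} is tuned to annihilate, as opposed to some other linear combination. Beyond this accounting, no new idea is needed beyond what was used for Thm. \ref{thm:Conservation2DOriginal}, so I do not anticipate a serious obstacle.
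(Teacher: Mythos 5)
Your proposal is correct and follows essentially the same route as the paper's proof: the weighted combination of the four weak forms gives the new penalty terms the coefficients $(1-\eta)$ and $\eta$, so they collapse to $\statevecGreek\phi^T\left((1-\eta)\mmatrix{$\Sigma$}^m_u - \eta\mmatrix{$\Sigma$}^m_v\right)(\statevec u - \statevec v)$, which vanishes under \eqref{eq:InteriorPenaltyCoupling}, leaving \eqref{eq:2DConservationEquation} unchanged. Your explicit bookkeeping of the weights is exactly the check the paper leaves implicit, and the final step invoking the equivalence with $\statevecGreek\omega$ matches the argument of Thm.~\ref{thm:Conservation2DOriginal}.
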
 
\begin{proof}
Under the condition \eqref{eq:InteriorPenaltyCoupling}, the interior penalty terms add
\begin{equation}
\statevecGreek\phi^T\left( (1-\eta)\mmatrix{$\Sigma$}^m_u - \eta\mmatrix{$\Sigma$}^m_v\right)(\statevec u - \statevec v) = 0,\quad m = 1, 2,\ldots, M
\end{equation}
at each point to \eqref{eq:2DConservationEquation} for any $\statevecGreek\phi$, $\statevec u$ and $\statevec v$, so Thm. \ref{thm:2DConservation} still holds.
\end{proof}

\begin{rem}
The addition of a finite number of penalty points is the strategy that one might encounter in numerical approximations. From the continuous perspective, however, one might consider replacing the sums with a continuous penalty that is non-zero only in the overlap region. With that penalty, the algebra carries through with integrals replacing the sums in \eqref{eq:InteriorPenaltyStrongU} and \eqref{eq:InteriorPenaltyStrongV}. The conditions on the coupling matrices remain the same, and the problem remains energy bounded and conservative.
\end{rem}

\section{Summary}

Overset grid methods, which have been used for four decades, have stability issues when applied to multidimensional problems. In this paper we stepped back from the numerical issue to the more fundamental one of whether a linear, constant coefficient hyperbolic initial-boundary value problem on which overset grid numerical methods are based is well-posed. Starting from a problem known to be well-posed is necessary for the development of a stable and accurate multidimensional overset grid approximation.

Using the energy method, we showed that characteristic coupling conditions between subdomains lead to a well-posed problem in one space dimension. For systems, the proof used the fact that the coefficient matrix can be diagonalized. If diagonalization is not used, then the energy method introduces interface terms that are not bounded by initial data.

To ensure well-posedness in multidimensional problems where requiring that the coefficient matrices be simultaneously diagonalizable is not usually possible, we introduced penalty terms at the domain interfaces (T3). Under specific conditions on the coupling matrices in those penalty terms, we showed that the overset domain problems are energy bounded, conservative and have solutions that are equivalent to the solution on the full domain. We use the latter to infer that that the overset domain problems are well-posed. The two tricks are to: (i) Define the energy over the entire domain by adding the energies of the subdomains, but subtracting the extra introduced by the overlap (T2) and (ii) use the fact that integration by parts is applicable in the overlap region to relate the interior and surface contributions (T1).

We also showed that stronger coupling introduced by adding penalty terms to the interior of the overlap region between their subdomains leads to a well-posed problem that is still equivalent to the original under specific conditions on the coupling matrices. This approach allows additional weighting of one subdomain solution over another, which may be useful in approximations where one solution is assumed to be more accurate than the other.

\section*{Acknowledgments}

 This work was supported by a grant from the Simons Foundation (\#426393, David Kopriva).  Jan Nordstr\"om was supported by Vetenskapsrådet, Sweden grant nr: 2018-05084 VR and the Swedish e-Science Research Center (SeRC). Gregor Gassner thanks the Klaus-Tschira Stiftung and
the European Research Council for funding through the ERC Starting Grant “An
Exascale aware and Un-crashable Space-Time-Adaptive Discontinuous Spectral
Element Solver for Non-Linear Conservation Laws” (EXTREME, project no. 71448).

\bibliographystyle{plain}
\bibliography{Chimera.bib}

\end{document}